\documentclass[a4paper,usenames,dvipsnames]{amsart} 

%% Language and font encodings
\usepackage[english]{babel}
\usepackage[utf8x]{inputenc}
\usepackage[T1]{fontenc}

%% Sets page size and margins
\usepackage[a4paper, top=3cm, bottom=2cm, left=3cm, right=3cm, marginparwidth=1.75cm]{geometry}

%% Useful packages
\usepackage{amsmath, amsthm}
\usepackage{amsfonts}
\usepackage{amssymb}
\usepackage{xcolor}
\usepackage[shortlabels]{enumitem}
\usepackage{graphicx}
\usepackage{bm}
\usepackage[colorinlistoftodos, textsize=tiny]{todonotes}
\usepackage[colorlinks=true, allcolors=blue]{hyperref}
\usepackage{caption}
\usepackage{xfrac}
\usepackage{mathtools}
\usepackage{verbatim}
\usepackage{bm}
\usepackage{array}
\usepackage{comment}
\usepackage{tikz}
	\usetikzlibrary{shapes.multipart}
	\usetikzlibrary{calc}
\usepackage{tikz-qtree}
\usepackage{forest}
\usepackage{hyperref}
\usepackage[foot]{amsaddr} 
\usepackage{mdframed}

\newtheorem{theorem}{Theorem}
\theoremstyle{plain}
\newtheorem{proposition}[theorem]{Proposition}
\newtheorem{conclusion}[theorem]{Conclusion}
\newtheorem{remark}[theorem]{Remark}
\newtheorem{lemma}[theorem]{Lemma}

\newtheorem{corollary}[theorem]{Corollary}
\numberwithin{theorem}{section}

\newmdtheoremenv{notation2}[theorem]{Convention}

\theoremstyle{definition}
\newtheorem{example}[theorem]{Example}

\newtheorem{notation3}[theorem]{Convention}
\newenvironment{notation4}
  {\begin{mdframed}\begin{notation3}}
  {\end{notation3}\end{mdframed}}
  
\numberwithin{equation}{section}

\theoremstyle{remark}
\newtheorem*{remark*}{Remark}

\newcommand{\Q}{\mathbb{Q}}
\newcommand{\Z}{\mathbb{Z}}
\newcommand{\N}{\mathbb{N}}
\newcommand{\R}{\mathbb{R}}
\newcommand{\C}{\mathbb{C}}

\newcommand{\OK}[1][K]{\ensuremath{\mathcal{O}_{#1}}} 
\newcommand{\OKPlus}[1][K]{\ensuremath{\mathcal{O}_{#1}^+}}
\newcommand{\UK}[1][K]{\ensuremath{\mathcal{U}_{#1}}} 
\newcommand{\UKPlus}[1][K]{\ensuremath{\mathcal{U}_{#1}^+}} \newcommand{\OKdva}[1][K]{\ensuremath{\mathcal{O}_{#1}}^2} % lattice O_K^2
\newcommand{\OKctv}[1][K]{\ensuremath{\mathcal{O}_{#1}}^{\square}} % squares O_K^2
\newcommand{\UKctv}[1][K]{\ensuremath{\mathcal{U}_{#1}}^{\square}} % squares U_K^2

\newcommand{\OF}{\OK[F]} 

\newcommand{\UFctv}{\UKctv[F]}
\newcommand{\UFPlus}{\UKPlus[F]}
\newcommand{\OFPlus}{\OKPlus[F]}

\newcommand{\BQ}[2]{\ensuremath{\Q\big(\sqrt{#1}, \sqrt{#2}\big)}}

\newcommand{\Tr}[2]{\mathrm{Tr}_{#1}\left(#2\right)} % trace
\newcommand{\norm}[2]{\mathcal{N}_{#1}\left(#2\right)}
\newcommand{\M}{\mathcal{M}} 
\renewcommand{\S}{\mathcal{S}}%
\newcommand{\T}{\mathcal{T}}%
\newcommand{\U}{\mathcal{U}}
\newcommand{\csqrt}[1]{\left \lceil\!{\sqrt{#1}}\mspace{1 mu}\right \rceil  } % 
\newcommand{\cm}{\csqrt{m}}
\newcommand{\cs}{\csqrt{s}}
\newcommand{\ct}{\csqrt{t}}
\newcommand{\cu}{\csqrt{u}}
\newcommand{\csqrto}[1]{\csqrt{#1}^{\mathrm{odd}}} % 
\newcommand{\cmo}{\csqrto{m}}
\newcommand{\cso}{\csqrto{s}}
%unused
\newcommand{\cuo}{\csqrto{u}}

\newcommand{\abs}[1]{\left|#1\right|}
\newcommand{\ve}{\varepsilon}
\newcommand{\uqf}[1]{\langle #1 \rangle} % 
\newcommand{\bqf}[2]{\uqf{#1}\bot\uqf{#2}} % 
\newcommand{\tqf}[3]{\uqf{#1}\bot\uqf{#2}\bot\uqf{#3}} % 
\newcommand{\p}{\mathfrak{p}} % 
\newcommand{\vct}[1]{\bm{#1}} % 
\newcommand{\e}{\vct{e}} %
\newcommand{\f}{\vct{f}} %
\newcommand{\indx}[1]{\mathcal{I}_{\vct{#1}}} % 

\DeclareMathOperator{\im}{Im}
\DeclareMathOperator{\Ker}{Ker}

\renewcommand{\arraystretch}{1.5}

\newcommand{\splitatcommas}[1]{%
  \begingroup
  \begingroup\lccode`~=`, \lowercase{\endgroup
    \edef~{\mathchar\the\mathcode`, \penalty0 \noexpand\hspace{0pt plus 1em}}%
  }\mathcode`,="8000 #1%
  \endgroup
}% 

\newcommand{\uv}[1]{``#1''} % 
\newcommand{\conds}[1]{{\textcolor{violet}{#1}}} %podminky -- NECHAT

\title[Nonuniversality of ternary forms over biquadratic fields]{There are no universal ternary quadratic forms over biquadratic fields}

\author[J. Kr\'asensk\'y]{Jakub Kr\'asensk\'y$^1$}
\author[M. Tinkov\'{a}]{Magdal\'{e}na Tinkov\'{a}$^1$}
\address{$^1$Charles University, Faculty of Mathematics and Physics, Department of Algebra,
Sokolovsk\'{a} 83, 18600 Praha 8, Czech Republic}
\author[K. Zemkov\'{a}]{Krist\'{y}na Zemkov\'{a}$^2$}
\address{$^2$Fakult\"at f\"ur Mathematik, Technische Universit\"at Dortmund, D-44221 Dortmund,
Germany}
\email{krasensky@seznam.cz, tinkova.magdalena@gmail.com, zemk.kr@gmail.com}

\date{\today}
\subjclass[2010]{11E20; 11E12, 11R04, 11R80}
\keywords{Indecomposable integer, universal quadratic form, ternary quadratic form, biquadratic number field.}

\thanks{J. K., M. T. and K. Z.  were supported by the Charles University, project GA UK No.\ 1298218. J. K. and M. T. were supported by Czech Science Foundation (GA\v{C}R), grant 17-04703Y, by Charles University Research Centre program UNCE/SCI/022, and by the project SVV-2017-260456. K. Z. was supported by DFG project HO 4784/2-1.
}

\begin{document}

\begin{abstract}
We study totally positive definite quadratic forms over the ring of integers $\mathcal{O}_K$ of a totally real biquadratic field $K=\mathbb{Q}(\sqrt{m}, \sqrt{s})$. We restrict our attention to classical forms (i.e., those with all non-diagonal coefficients in $2\mathcal{O}_K$) and prove that no such forms in three variables are universal (i.e., represent all totally positive elements of $\mathcal{O}_K$). Moreover, we show the same result for totally real number fields containing at least one nonsquare totally positive unit and satisfying some other mild conditions. These results provide further evidence towards Kitaoka's conjecture that there are only finitely many number fields over which such forms exist.

One of our main tools are additively indecomposable elements of $\mathcal{O}_K$; we prove several new results about their properties.
\end{abstract}

\maketitle

\section{Introduction}
Several generations of number theorists have been interested in quadratic forms. One of the most beautiful topics in this area is the study of universal quadratic forms with integral coefficients. It started with Lagrange's famous theorem, which says that every nonnegative rational integer can be expressed as a sum of four squares; in modern terminology, we talk about the universality of the form $x^2+y^2+z^2+w^2$ over the ring $\Z$. Later, all universal forms over $\Z$ in four variables were characterized by Ramanujan \cite{Ra}. In particular, no ternary (i.e., in three variables) quadratic form with integer coefficients can represent all positive elements of $\Z$. 

Considering, instead of $\Z$, the ring of algebraic integers $\OK$ for a totally real algebraic extension $K$ of $\Q$, many directions of research appear. In \cite{Ma}, Maass showed that the sum of three squares is universal over $\mathcal{O}_{\Q(\sqrt{5})}$, which was followed by the result of Siegel \cite{Si} saying that the sum of any number of squares is universal only over $\Z$ and $\mathcal{O}_{\Q(\sqrt{5})}$. Moreover, Hsia, Kitaoka and Kneser \cite{HKK} proved that in any given number field, there always exists a universal quadratic form. This naturally leads to the question how many variables this form must have.

Blomer and Kala \cite{BK, Ka} have shown that for any given $N$, there can be always found a quadratic number field in which every universal quadratic form has at least $N$ variables. The same result can be obtained for multiquadratic fields (Kala and Svoboda \cite{KS}) and cubic fields (Yatsyna \cite{Ya}). On the other hand, one can ask, for a given field, what is the least possible number of variables of a universal quadratic form; \v{C}ech, Lachman, Svoboda and two present authors \cite{CLSTZ} started to examine this question for biquadratic fields. Kitaoka conjectured that there are only finitely many fields which admit universal ternary quadratic forms. This idea was supported by the result of Chan, Kim and Raghavan \cite{CKR}, who proved that $\Q(\sqrt{2}),\Q(\sqrt{3})$ and $\Q(\sqrt{5})$ are the only quadratic fields with a universal classical totally positive definite ternary quadratic form. Inspired by this statement, we prove the following theorem. 

\begin{theorem}\label{Thm:Main}
For a totally real biquadratic field $K$, there is no universal classical totally positive definite ternary quadratic form over $\OK$.  
\end{theorem}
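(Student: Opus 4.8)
The plan is to prove that no classical totally positive definite ternary quadratic form over $\OK$ can represent all totally positive integers, for every totally real biquadratic field $K = \Q(\sqrt{m},\sqrt{s})$. The natural strategy is to show that any such candidate ternary form $Q$ necessarily fails to represent some specific totally positive element, and the most promising source of such obstructions are the additively indecomposable integers mentioned in the abstract. **First I would** set up the local-global machinery: a totally positive definite form over $\OK$ represents all totally positive integers only if it represents them locally everywhere and also clears the global obstructions, so I would look for elements that are either locally not represented by any ternary form of the required discriminant type, or that force the form to have a structure incompatible with totally positive definiteness.

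**The key steps, in order, would be the following.** I would first exploit the hypothesis that $K$ contains a nonsquare totally positive unit $\ve$ (which holds for biquadratic fields since the totally positive unit group modulo squares is nontrivial here). Multiplying a represented element by such a unit gives another represented element, so the set of represented totally positive integers is closed under multiplication by $\ve$; this severely constrains a three-dimensional form. **Next I would** bring in the indecomposable integers: I would locate a totally positive indecomposable $\alpha$ whose conjugates are ``spread out'' enough (large ratios between the embeddings) that representing $\alpha$ by a ternary form forces one of the diagonal coefficients to be very small at some real place while remaining totally positive, pinning down the form up to finitely many possibilities. **Then**, using the trace form and the structure of $\OKPlus$, I would derive inequalities relating the coefficients of $Q$ to the traces $\Tr{K/\Q}{\alpha}$ of these indecomposables, showing that no single ternary form can simultaneously represent a well-chosen finite collection of indecomposables together with the units' action on them.

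**The hard part will be** the case analysis over the possible shapes of the biquadratic field. A totally real biquadratic field $K = \Q(\sqrt{m},\sqrt{s})$ has three quadratic subfields $\Q(\sqrt{m})$, $\Q(\sqrt{s})$, $\Q(\sqrt{t})$ with $t = ms/\gcd(m,s)^2$, and both the unit structure and the indecomposable elements depend delicately on the residues of $m,s,t$ modulo $4$ and on the continued-fraction data of the relevant real quadratic subfields. I expect the main obstacle to be producing, uniformly across all these arithmetic types, an explicit totally positive integer (or short list of integers) that provably resists representation; this likely requires the new structural results on indecomposables promised in the abstract, in particular good lower bounds on their traces and on the ``gaps'' in the additive semigroup $\OKPlus$. **Finally I would** assemble the pieces: reduce to finitely many candidate forms via the unit and trace constraints, and then eliminate each survivor by exhibiting a specific unrepresented totally positive element, thereby contradicting universality and completing the proof of Theorem~\ref{Thm:Main}.
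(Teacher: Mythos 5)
Your proposal has two concrete gaps that would sink it. First, the claim that every totally real biquadratic field contains a nonsquare totally positive unit is false: the group $\sfrac{\UKPlus}{\UKctv}$ can be trivial, and this is precisely the hardest case of the paper (Case (III) of Section~\ref{Sec:IdeaOfProof}, occupying Subsections~\ref{Subsec:CaseIIIdiag} and~\ref{Subsec:CaseIIInondiag} plus most of Section~\ref{Sec:Special cases}). When such a unit \emph{does} exist and $2\ve$ is not a square, the argument is short (Subsection~\ref{Subsec:CaseIInonsquare}, which even yields the more general Theorem~\ref{Thm:General}); a proof strategy that assumes its existence simply does not cover all fields. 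Second, your assertion that the set of represented elements is closed under multiplication by a nonsquare totally positive unit $\ve$ is wrong: if $Q(\vct{x})=\alpha$ then $Q(\mu\vct{x})=\mu^2\alpha$, so representability is stable only under multiplication by \emph{squares} of units. Indeed, the whole point of Case (I) is that $\uqf{1}$ represents all of $\UKctv$ but \emph{fails} to represent $\ve$, which is what forces the form to split further. The local-global framing is also a dead end here: the obstruction to universality of ternary forms over these fields is not local but combinatorial, coming from the fact that indecomposable elements must be represented by a unary subform of a (split) diagonal form, and that two elements represented by the same unary form have square product (Proposition~\ref{Prop:StrongerForms}(1)).

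What the paper actually does, and what your sketch is missing, is the repeated orthogonal splitting $Q\cong\uqf{1}\bot Q_0$ (Corollary~\ref{Cor:Stepeni}), the classification of additive decompositions of $2,3,5$ (Lemma~\ref{Lemma:TrivDecomp}), the proof that $\M=\cm+\sqrt m$ (resp.\ $\frac{\cmo+\sqrt m}{2}$) is indecomposable and nonsquare in \emph{every} biquadratic field (Proposition~\ref{Prop:IndecM}, Corollary~\ref{Cor:NonsquareM}), a separate lattice-theoretic treatment of nondiagonalizable binary subforms via the decomposition $\OKdva=\indx{e}\e\oplus\indx{f}\f$, and an escalation/Gram-matrix computation for the finitely many exceptional fields (e.g.\ those containing $\sqrt2$ or $\sqrt5$). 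Your instinct to use indecomposables and a case analysis on $m,s,t \bmod 4$ points in the right direction, but without the splitting lemma, the correct unit bookkeeping, and a plan for the all-units-are-squares case, the proof cannot be completed along the lines you describe.
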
 

The proof is divided into Sections~\ref{Sec:NonspecialCases} and~\ref{Sec:Special cases}. This theorem is, to the extent of our knowledge, the first result about universality of ternary forms in fields of even degree different from two. For all totally real number fields of odd degree, a proof of nonexistence of universal ternary forms can be found in \cite{EK}. In fact, one part of our proof can be applied to prove nonuniversality of ternary forms in a broad class of number fields of arbitrary degree:

\begin{theorem}\label{Thm:General}
Let $K$ be a totally real field, $\sqrt2\notin K$, which contains a nonsquare totally positive unit $\ve$ such that $2\ve$ is not a square in $K$. Then there is no universal classical totally positive definite ternary quadratic form over $\OK$. 
\end{theorem}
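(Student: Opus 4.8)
The plan is to argue by contradiction: assume a universal classical totally positive definite ternary form $Q$ over $\OK$ exists, reduce it to the diagonal shape $\tqf{1}{\ve}{\delta}$, and then locate a totally positive element it cannot represent by a local analysis at a prime above $2$. First I would use that $1$ is totally positive, so $Q$ represents $1$; as the corresponding vector satisfies $B(v,v)=1$ (a unit), the classical lattice splits off a unimodular line, giving $Q\cong\uqf{1}\bot Q'$ with $Q'$ binary, classical and totally positive definite. The engine of the reduction is a size argument: if $u$ is a totally positive \emph{nonsquare} unit and $Q$ represents $u$, then writing $u=x^2+Q'(w)$ forces $Q'(w)=u-x^2$ to be nonzero (since $u$ is a nonsquare) and hence totally positive, so $(x^{(i)})^2<u^{(i)}$ at every real embedding and therefore $\norm{K/\Q}{x}^2<\norm{K/\Q}{u}=1$; as $\norm{K/\Q}{x}\in\Z$, this gives $x=0$, so $Q'$ already represents $u$. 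Applying this to $u=\ve$ and splitting off the resulting unit line yields $Q\cong\tqf{1}{\ve}{\delta}$ for a totally positive $\delta\in\OK$ (over each completion this is a genuine diagonalization; globally the last summand may be a nonprincipal ideal, which is irrelevant to the local argument). The same computation shows, more strongly, that $Q'$ must represent \emph{every} totally positive nonsquare unit.

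Next I would search for the forbidden value locally. By approximation every square class at a finite prime is met by totally positive integers, so it suffices to exhibit, at some finite prime, a whole square class omitted by $\tqf{1}{\ve}{\delta}$ over the completion. I would encode this via the quaternion algebra $A=(-\ve,-\delta)$: the ternary form $\tqf{1}{\ve}{\delta}$ is anisotropic over $K_v$ exactly when $A$ ramifies at $v$, and there it omits precisely the square class of $-\ve\delta$. Since $\ve$ and $\delta$ are totally positive, $A$ ramifies at every real place, so by Hilbert reciprocity it ramifies at an even number of places in total. If $A$ ramifies at some finite place $v$, then $\tqf{1}{\ve}{\delta}$ is anisotropic over $K_v$, and a totally positive representative of the class of $-\ve\delta$ is not represented even over $K_v$, a contradiction. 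Note that the number of finite ramified places has the same parity as $[K:\Q]$, so for odd degree a finite ramified place always exists and this easy case already suffices (consistent with the odd-degree result of \cite{EK}).

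The main obstacle is the remaining case, which is exactly what the hypotheses on $2$ are for: when $A$ is unramified at all finite places — possible only when $[K:\Q]$ is even, as for biquadratic fields — the form $\tqf{1}{\ve}{\delta}$ is isotropic at every finite prime and, by Hasse--Minkowski, represents every totally positive element over $K$, so the failure of universality must be detected integrally at a prime $\p\mid2$. Here I would invoke $\sqrt2\notin K$ and $2\ve\notin(K^\times)^2$: together they force $1,2,\ve,2\ve$ into four distinct square classes, giving enough room in the dyadic square-class group to analyze the Jordan splitting of $\tqf{1}{\ve}{\delta}$ over $\OK[K_\p]$ and to pin down an integer — in a translate of the dyadic class playing the role of $7\bmod 8$ for the sum of three squares — that the lattice cannot represent integrally, which then lifts to a totally positive global element. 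Carrying out this dyadic integral analysis uniformly in the unknown unit $\delta$, with a case split on the behaviour of $2$ in $K$, is the step I expect to be hardest, and it is precisely where the classicality of $Q$ and the exact hypotheses on $2\ve$ are indispensable.
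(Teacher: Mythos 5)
Your reduction to $Q\cong\tqf{1}{\ve}{\delta}$ is sound and matches the paper's first step (your norm inequality is exactly the proof that totally positive units are additively indecomposable). The easy half of your local analysis is also fine: if the quaternion algebra $(-\ve,-\delta)$ ramifies at a finite place $v$, then $\tqf{1}{\ve}{\delta}$ omits the square class of $-\ve\delta$ over $K_v$, and a totally positive element of $\OK$ in that class is not represented. But the case you yourself identify as the hard one --- the algebra unramified at every finite place, which is exactly the generic situation for fields of even degree and hence for all biquadratic fields --- is not proved; it is only announced as a ``dyadic integral analysis'' to be carried out later. That is where the entire content of the theorem lives, so as it stands the proposal has a genuine gap. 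Moreover, the route you sketch for closing it is doubtful: the fact that $1,2,\ve,2\ve$ lie in four distinct \emph{global} square classes does not imply they remain distinct in $K_{\p}^{\times}/(K_{\p}^{\times})^{2}$ for a given $\p\mid 2$ (the unit $\ve$ may well become a local square), so the ``room in the dyadic square-class group'' you want to exploit need not exist, and it is unclear that a Jordan-splitting analysis uniform in the unknown $\delta$ can be pushed through at a single dyadic prime.

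The paper's actual argument never localizes and is much more elementary. It uses the norm inequality \eqref{Eq:NormSum} to show that in any totally real field the only additive decompositions of $2$ in $\OKPlus\cup\{0\}$ are $1+1$ and $0+2$ (and, after multiplying by $\ve^{-1}$, that $2\ve$ decomposes only as $\ve+\ve$ or $0+2\ve$). Feeding the representations of $2$ and of $2\ve$ into $\tqf{1}{\ve}{\delta}$: since $\uqf{\ve}$ represents neither $1$ nor $2$ (as $\ve$ and $2\ve$ are nonsquares) and $\uqf{1}$ does not represent $2$ (as $\sqrt2\notin K$), the form $\uqf{\delta}$ must represent one of $1,2$; symmetrically it must represent one of $\ve,2\ve$. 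But a unary form can only represent elements whose pairwise products are squares, and none of $\ve$, $2\ve$, $4\ve$ is a square --- contradiction. So the form cannot represent $1,\ve,2,2\ve$ simultaneously. If you want to salvage your approach you would need to actually produce the omitted integral dyadic value; otherwise I would recommend replacing the local machinery by this global decomposition argument.
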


The proof is contained in Subsection \ref{Subsec:CaseIInonsquare}. For more results on quadratic forms, see for example \cite{BK2,De, Ki1,Ki2,Ro,Sa}.

In totally real number fields, the question of universality of quadratic forms is closely related to the study of indecomposable integers. These are exactly those elements of $\OKPlus$ (the set of totally positive algebraic integers of $K$) which cannot be written as a sum of two elements of $\OKPlus$. In a certain sense, it is difficult to represent them by quadratic forms, which is demonstrated in the works of Blomer, Kala or Yatsyna. Despite this fact, there is not much known about them. In quadratic fields, they were characterized by Perron \cite{Pe}, Dress and Scharlau \cite{DS}, and their norms were studied by several authors \cite{JK, Ka2, TV}. %JK (Jang--Kim, Magda (TV), Víťa)
Some general statements can be found in the work of Brunotte \cite{Bru}. Considering biquadratic fields, we can draw on the results of \cite{CLSTZ}, which we extend in this paper. In particular, we prove the following theorem. 

\begin{theorem} \label{Thm:Inde}
Let $1<m<s<t$ be square-free integers such that $\BQ{m}{s}$ is a biquadratic field containing $\sqrt{t}$. For all $u\in\{m,s,t\}$, set
\[
    \U=\left\{
                \begin{array}{ll}
                  \cu+\sqrt{u} \qquad\text{if } u\equiv 2,3\!\!\!\pmod{4},\\
                  \frac{\cuo+\sqrt{u}}{2}\qquad\text{if } u\equiv 1\!\!\!\pmod{4},
                \end{array}
                \right.              
\]
where $\cuo$ is the smallest odd integer greater than $\sqrt{u}$. Then
\begin{enumerate}
\item $\M$ and $\S$ are indecomposable in $\BQ{m}{s}$, \label{Thm:MaS} 
\item $2\M-1$, if totally positive, is indecomposable in $\BQ{m}{s}$, \label{Thm:2M-1}
\item $\T$ can decompose in $\BQ{m}{s}$. \label{Thm:T}  
\end{enumerate}  
\end{theorem}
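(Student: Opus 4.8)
The plan is to translate indecomposability into a statement about lattice points under the four real embeddings $\sigma_1,\dots,\sigma_4$ of $K$, and then to exploit that each of $\M,\S,\T$ lies in a quadratic subfield, so that two of its conjugates are large (of size comparable to $\sqrt u$) while the other two equal the small conjugate, which lies in $(0,1)$. Concretely, $\alpha\in\OKPlus$ is indecomposable exactly when there is no $\beta\in\OK$ with $0\prec\beta\prec\alpha$, i.e.\ with $0<\sigma_i(\beta)<\sigma_i(\alpha)$ for every $i$; the complementary summand is then $\gamma=\alpha-\beta$. Writing $\beta=x_0+x_1\sqrt m+x_2\sqrt s+x_3\sqrt t$ and normalising $\sqrt t=\sqrt m\,\sqrt s/\gcd(m,s)$, the sign patterns of the $\sigma_i$ become explicit ($\sigma_2$ flips $\sqrt m,\sqrt t$; $\sigma_3$ flips $\sqrt s,\sqrt t$; $\sigma_4$ flips $\sqrt m,\sqrt s$), and the idea is to read off the binding inequalities from the two embeddings in which $\alpha$ is small.

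For part~\ref{Thm:MaS} take $\alpha=\M=\cm+\sqrt m$ (the case $m\equiv1$ is identical after replacing the basis element by $(\cmo+\sqrt m)/2$); its small conjugate $\cm-\sqrt m$ lies in $(0,1)$ and occurs in the two embeddings flipping $\sqrt m$, namely $\sigma_2,\sigma_4$. First I would add and subtract these two embeddings: the sum gives $x_0-x_1\sqrt m\in(0,\cm-\sqrt m)$ and the difference gives the sharp bound $\abs{x_2\sqrt s-x_3\sqrt t}<\tfrac12$. The two large embeddings $\sigma_1,\sigma_3$ give, analogously, $x_0+x_1\sqrt m\in(0,\cm+\sqrt m)$ and $\abs{x_2\sqrt s+x_3\sqrt t}<\cm$. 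Since $s,t>m$, these last two inequalities, together with the admissible denominators of the $x_i$ dictated by the explicit integral basis of $\OK$ recalled from \cite{CLSTZ}, force the transverse coordinates $x_2=x_3=0$. Then $x_0+x_1\sqrt m$ is a genuine integer of $\Q(\sqrt m)$ strictly inside $(0,\M)$ in both of its conjugates, contradicting the elementary indecomposability of $\M$ in the quadratic subfield (a two-line argument on the $\sqrt m$-coefficients). The same scheme with the radicands permuted handles $\S$, and part~\ref{Thm:2M-1} follows the identical pattern with $\alpha=2\M-1$, whose small conjugate is $2(\cm-\sqrt m)-1$; the hypothesis that $2\M-1$ is totally positive is exactly what makes this conjugate positive, and since $\cm-\sqrt m<1$ it still lies in $(0,1)$, so the same sum/difference inequalities apply.

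For part~\ref{Thm:T} the point is that the analogous bounds for $\T$ are genuinely weaker and do admit a solution. Here $\T$ is small in the two embeddings flipping $\sqrt t$, namely $\sigma_2,\sigma_3$, so the difference bound becomes $\abs{x_1\sqrt m-x_2\sqrt s}<\tfrac12$ and the large bound becomes $\abs{x_1\sqrt m+x_2\sqrt s}<\ct$; because $t$ is the largest radicand, both $x_1$ and $x_2$ now have room and a nonzero mixing lattice point survives. I would therefore not argue by contradiction but instead exhibit an explicit $\delta\in\OKPlus$ with $\delta\prec\T$, built from the basis elements involving $\sqrt m$ and $\sqrt s$, and verify directly that all four conjugates of both $\delta$ and $\T-\delta$ are positive; this produces the decomposition $\T=\delta+(\T-\delta)$.

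The main obstacle is the bookkeeping forced by the integral bases: the residues of $m,s,t$ modulo $4$ change both the basis and the admissible denominators of $x_0,\dots,x_3$, so the clean reduction ``$x_2=x_3=0$'' must be verified case by case, with particular care when the transverse coordinates are half-integers rather than integers, since then the difference bound $\abs{x_2\sqrt s-x_3\sqrt t}<\tfrac12$ is only barely strong enough. The other delicate point is producing, for part~\ref{Thm:T}, a single decomposing element $\delta$ that works uniformly across the relevant congruence cases, together with the borderline total-positivity estimate underlying the hypothesis of part~\ref{Thm:2M-1}.
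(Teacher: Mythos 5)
Your sum-and-difference bookkeeping for $\M$ does reproduce the paper's key estimates (it yields $|x_3|\sqrt{t}<\tfrac12\cm$ and, once $x_3=0$, $|x_2|\sqrt{s}<\tfrac12(\cm-\sqrt{m})<\tfrac12$), but the step where you kill the transverse coordinates is justified only by ``since $s,t>m$'', and that is not enough. To rule out $x_3=\pm\tfrac12$ you need $\sqrt{t}<\cm<\sqrt{m}+1$ to be impossible, i.e.\ $\sqrt{t}-\sqrt{m}>1$ (and $>4$ in the quarter-integer basis); this is true but requires an argument using the multiplicative relation $t=\frac{ms}{\gcd(m,s)^2}$ and concavity of the square root, not merely $t>m$. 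Much more seriously, the claim that ``the same scheme with the radicands permuted handles $\S$'' breaks down: there the binding inequality becomes $|x_3|\sqrt{t}<\tfrac12\cs$, and one would need $\sqrt{t}-\sqrt{s}>1$, which is \emph{false} in general --- only $\sqrt{t}-\sqrt{s}>\tfrac12$ holds, and there are fields (e.g.\ $\BQ{1806}{2814}$) where even $\ct=\cs$. So a residual case $x_3=\pm\tfrac12$ survives your inequalities, and eliminating it requires a separate, rather delicate argument (one is forced into $\gamma=\tfrac12(\cs+b\sqrt{m}+\sqrt{s}+\sqrt{t})$, and one must use that $\sqrt{t}-\sqrt{s}\le1$ forces $m$ even with $2\sqrt{m}>\sqrt{s}$, together with the integral-basis constraints, to exclude $b=1,2$ and $b\ge3$ separately). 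This is the genuinely hard part of proving indecomposability of $\S$ and it is absent from your plan.

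For part (3) you have misread the statement: ``$\T$ \emph{can} decompose'' is an existence claim, settled by exhibiting one field and one decomposition (e.g.\ $7+\sqrt{42}=\bigl(\tfrac72+\tfrac32\sqrt2+\tfrac12\sqrt{21}+\tfrac12\sqrt{42}\bigr)+\bigl(\tfrac72-\tfrac32\sqrt2-\tfrac12\sqrt{21}+\tfrac12\sqrt{42}\bigr)$ in $\BQ{2}{21}$). Your plan to produce a $\delta$ that ``works uniformly across the relevant congruence cases'' would fail, because $\T$ is in fact indecomposable in infinitely many biquadratic fields (for instance whenever $m\equiv s\not\equiv1\pmod4$, so that $t=q$ and the quadratic indecomposables from $\Q(\sqrt{q})$ survive); no uniform decomposing element exists. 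Replace that part of the plan with a single explicit example.
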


We prove the theorem in Subsection~\ref{Subsec:NewIndeElms}. In particular, the statement of part \eqref{Thm:MaS} is covered by Propositions~\ref{Prop:IndecM} and~\ref{Prop:IndecS}, and part \eqref{Thm:2M-1} by Proposition~\ref{Proposition:2M-1indecomposable}. Part \eqref{Thm:T} follows from Example~\ref{Exm:Tdecomp}; note that this example provides an indecomposable algebraic integer from a quadratic field which decomposes in a biquadratic field, thus solving an open question from \cite{CLSTZ}.

The indecomposability of $\M$ stated in Theorem~\ref{Thm:Inde}\eqref{Thm:MaS} is one of the keystones in the proof of Theorem~\ref{Thm:Main}. Although the statement of Theorem \ref{Thm:Inde}\eqref{Thm:MaS} might seem expectable at first glance, the  indecomposability of $\M$ and $\S$ in \emph{every} biquadratic field is not immediate, as indicates part \eqref{Thm:T} of the theorem. The proof is rather technical; one of the difficulties arises from the freedom for possible decompositions, which is much greater than in the quadratic case: Instead of two coefficients and two integral bases, one has to consider four coefficients and five different types of bases. Similar obstacles occur in proofs of most of our lemmas, making them more difficult than their quadratic counterparts. This requires a careful approach and case distinction -- it can be treacherous even in the quadratic case, as illustrated with the overlooked exceptional field in the paper \cite{CKR} (see Appendix~\ref{App:sqrt10} where this omission is handled) -- and also several new ideas.  Moreover, any condition of the type \uv{$\sqrt{n}\notin K$} (e.g., $\sqrt{2}\notin K$) produces an infinite family of exceptional biquadratic fields instead of just excluding the field $\Q(\sqrt{n})$.

\bigskip

The paper is organized as follows: In Section~\ref{Sec:Prelim} we review some basic facts about biquadratic number fields, quadratic forms and indecomposable integers in totally real number fields. Since the proof of our main results consists of several parts, Section~\ref{Sec:IdeaOfProof} provides a brief outline of the proof. In particular, we explain the reason for the subsequent case distinction.

Section~\ref{Sec:BeautifulAndUgly} contains some preparatory statements, mostly about algebraic integers in biquadratic number fields. Above all, we are concerned with indecomposability;  we prove Theorem~\ref{Thm:Inde} in Subsection~\ref{Subsec:NewIndeElms}. Furthermore, we show that some elements of $\OK$ derived from the elements appearing in this theorem are not squares in $K$, and we also study decompositions of small rational integers in $K$.  At the end of the section, we look more closely at representability of some elements by unary quadratic forms and at splitting of quadratic forms and corresponding lattices.

Section~\ref{Sec:NonspecialCases} provides the main part of our proof -- the method introduced here can be applied to most biquadratic fields. The remaining cases are solved 
in Section~\ref{Sec:Special cases}; this time, we use several different methods to deal with arising difficulties, including the knowledge of some indecomposable integers in $K$ and the method of escalation. Some computations used in proofs in Subsections~\ref{Subsec:2}, \ref{Subsec:5} and~\ref{Subsec:Specific} were performed by programs written in Mathematica.

The paper is concluded by three appendixes: Appendix~\ref{App:sqrt10} recovers the problematic quadratic case of $\Q(\sqrt{10})$ from \cite{CKR}. In Appendix~\ref{Subsec:Units}, we provide some insight into the behavior of totally positive units from a quadratic subfield when considering them as elements of the biquadratic field. Finally, to make the orientation in the different branches of the proof of Theorem~\ref{Thm:Main} easier, Appendix~\ref{App} contains a sketch of its tree structure in the main cases. 
%----------------------------------------------------------------------------------------------

\section{Preliminaries} \label{Sec:Prelim}

\subsection{Algebraic integers}
Let $K$ be a totally real number field, i.e., a number field where all embeddings $\sigma$ of $K$ into $\C$ (including the identity) actually map $K$ into $\R$. We say that $\alpha\in K$ is \emph{totally positive}, denoted by $\alpha\succ 0$, if $\sigma(\alpha)>0$ for each embedding $\sigma$. Let $\OK$ be the ring of algebraic integers in $K$; the subset of totally positive elements of $\OK$ will be denoted by $\OKPlus$. Obviously it is closed under addition and multiplication and contains the set of all squares $\OKctv$. The notation $\alpha\succcurlyeq 0$ means that $\alpha$ is either totally positive or $0$, and we use $\alpha\succ \beta$ (resp.\ $\alpha\succcurlyeq \beta$) to denote $\alpha-\beta\succ 0$ (resp.\ $\alpha-\beta\succcurlyeq 0$).  We use symbols $\Tr{K/\Q}{\alpha}$ and $\norm{K/\Q}{\alpha}$ to denote the \emph{trace} and the \emph{norm} of $\alpha$, i.e., 
\[
\Tr{K/\Q}{\alpha}=\sum_{\sigma}\sigma(\alpha) \qquad \text{and} \qquad \norm{K/\Q}{\alpha}=\prod_{\sigma}\sigma(\alpha) 
\] 
where the sum and the product run over all the embeddings $\sigma$ of $K$ into $\C$. Obviously, if $\alpha,\beta\in\OK$ satisfy $\alpha\preccurlyeq\beta$, then $\Tr{K/\Q}{\alpha}\leq\Tr{K/\Q}{\beta}$; if, moreover, both of $\alpha$ and $\beta$ are totally positive, then also $\norm{K/\Q}{\alpha}\leq\norm{K/\Q}{\beta}$.
Furthermore, norm has the following nice property: If $K$ is of degree $N$ over $\Q$ and  $\alpha,\beta\in\OKPlus$, then 
\begin{equation} \label{Eq:NormSum}
\sqrt[N]{\norm{K/\Q}{\alpha+\beta}}\geq\sqrt[N]{\norm{K/\Q}{\alpha}}+\sqrt[N]{\norm{K/\Q}{\beta}};
\end{equation}
this is essentially the generalized Hölder's inequality. Equality happens if and only if $\frac\alpha\beta \in \Q$.

We denote by $\UK$ the set of units of $\OK$, i.e., the set of algebraic integers of norm $\pm1$; furthermore, we write $\UKPlus$ (resp.\ $\UKctv$)  for the subset of totally positive units (resp.\ for the subset of squares of units).

\subsection{Biquadratic fields}

Let $p,q>1$ be two different square-free integers, put $K=\BQ{p}{q}$ and $r=\frac{pq}{\gcd(p,q)^2}$; such a field $K$ is called a (totally real) \emph{biquadratic} field. Throughout the paper, $K$ denotes only such a field.  It has degree $4$ and one possible $\Q$-vector space basis is $(1,\sqrt{p},\sqrt{q},\sqrt{r})$; we can find three quadratic subfields in $K$, namely $\Q(\sqrt{p})$, $\Q(\sqrt{q})$ and $\Q(\sqrt{r})$. There are four embeddings of $K$ into $\C$: If $\alpha=x+y\sqrt{p}+z\sqrt{q}+w\sqrt{r}\in K$, then 
\[\begin{aligned}
\sigma_1(\alpha)&=x+y\sqrt{p}+z\sqrt{q}+w\sqrt{r}, \\
\sigma_2(\alpha)&=x-y\sqrt{p}+z\sqrt{q}-w\sqrt{r}, \\
\sigma_3(\alpha)&=x+y\sqrt{p}-z\sqrt{q}-w\sqrt{r}, \\
\sigma_4(\alpha)&=x-y\sqrt{p}-z\sqrt{q}+w\sqrt{r}.
\end{aligned}\] 
Note that biquadratic fields are Galois extensions of $\Q$, i.e., all $\sigma_i$'s are automorphisms of $K$.

Depending  on $p,q\pmod{4}$, after possibly interchanging the role of $p$, $q$ and $r$, every case can be converted into one of the five cases listed below. The importance of distinguishing these five types of fields lies in the fact that it determines the \emph{integral basis}, i.e., basis of $\OK$ regarding it as a $\Z$-module (see \cite[Section~8]{Ja} and \cite[Theorem~2]{Wi}): 

{\renewcommand{\arraystretch}{1.5} 
\[\begin{array}{cclcl}
    (\text{B}1) & & p\equiv 2\!\!\!\pmod{4},\ q\equiv 3\!\!\!\pmod{4}, & & \left(1, \sqrt{p}, \sqrt{q}, \frac{\sqrt{p}+\sqrt{r}}2\right)\\
    (\text{B}2) & & p\equiv 2\!\!\!\pmod{4},\ q\equiv 1\!\!\!\pmod{4}, & & \left(1, \sqrt{p}, \frac{1+\sqrt{q}}2, \frac{\sqrt{p}+\sqrt{r}}{2} \right)\\
    (\text{B}3) & & p\equiv 3\!\!\!\pmod{4},\ q\equiv 1\!\!\!\pmod{4}, & & \left(1, \sqrt{p}, \frac{1+\sqrt{q}}2, \frac{\sqrt{p}+\sqrt{r}}{2}\right)\\
    (\text{B}4) & & p\equiv 1\!\!\!\pmod{4},\ q\equiv 1\!\!\!\pmod{4}, \text{ and } & & \\
        & (a) &
        \gcd(p,q)\equiv1\!\!\!\pmod{4}, \text{ or } & & \left(1, \frac{1+\sqrt{p}}2, \frac{1+\sqrt{q}}2, \frac{1+\sqrt{p}+\sqrt{q}+\sqrt{r}}{4}\right)\\
        & (b) & \gcd(p,q)\equiv3\!\!\!\pmod{4} & & \left(1, \frac{1+\sqrt{p}}2, \frac{1+\sqrt{q}}2, \frac{1-\sqrt{p}+\sqrt{q}+\sqrt{r}}{4}\right) \\
\end{array}\]} 

Note that in all these cases, we have $p\equiv r\pmod{4}$, so in cases (B1), (B2) and (B3), $p$ and $r$ are interchangeable. In case (B4), all of $p, q, r$ are interchangeable. The field discriminants are (B1) $64pqr$, (B2) $16pqr$, (B3) $16pqr$, (B4) $pqr$; these are actually just products of the discriminants of the quadratic subfields.

 For convenience of the reader, we include a more explicit form of algebraic integers depending on the integral bases.
 
\begin{remark}\label{Rem:BasisElements}
An element $\alpha\in K$ belongs to $\OK$ if and only if there exist $a,b,c,d\in\Z$ such that
 \begin{itemize}[labelindent=\parindent,leftmargin=5.5em]
     \item[\emph{(B1)}] $\alpha=a+\frac{b}{2}\sqrt{p}+c\sqrt{q}+\frac{d}{2}\sqrt{r}$ with $b\equiv d\pmod2$,
     \item[\emph{(B2), (B3)}] $\alpha=\frac{a}{2}+\frac{b}{2}\sqrt{p}+\frac{c}{2}\sqrt{q}+\frac{d}{2}\sqrt{r}$ with $a\equiv c\pmod2$ and $b\equiv d\pmod2$, 
     \item[\emph{(B4}a\emph{)}]  
         \begin{itemize}
            \item[] \hspace{-0.9cm} either $\alpha=\frac{a}{2}+\frac{b}{2}\sqrt{p}+\frac{c}{2}\sqrt{q}+\frac{d}{2}\sqrt{r}$ with $a+b+c+d\equiv0\pmod2$,
            \item[] \hspace{-0.9cm} or $\alpha=\frac{a}{4}+\frac{b}{4}\sqrt{p}+\frac{c}{4}\sqrt{q}+\frac{d}{4}\sqrt{r}$ with $a,b,c,d$ odd and $a+b+c+d\equiv0\pmod4$.
        \end{itemize}
     \item[\emph{(B4}b\emph{)}] 
        \begin{itemize}
            \item[] \hspace{-0.9cm} either $\alpha=\frac{a}{2}+\frac{b}{2}\sqrt{p}+\frac{c}{2}\sqrt{q}+\frac{d}{2}\sqrt{r}$ with $a+b+c+d\equiv0\pmod2$,
            \item[] \hspace{-0.9cm} or $\alpha=\frac{a}{4}+\frac{b}{4}\sqrt{p}+\frac{c}{4}\sqrt{q}+\frac{d}{4}\sqrt{r}$ with $a,b,c,d$ odd and $a+b+c+d\equiv2\pmod4$.
        \end{itemize}
 \end{itemize}
\end{remark} 

\begin{notation4}\label{Conv}
Throughout this article, we use three different triples of letters for the specification of a biquadratic number field, each of them implicitly having a different property. Regardless of the notation, we always assume all the three numbers to be square-free, and any of the three numbers to be equal to the product of the remaining two divided by the second power of their greatest common divisor. Then the biquadratic field is generated by any two of these three elements. We use the following notation:
\begin{itemize}
    \item $p,q,r$: this triple always satisfies one of the possible congruences above (and hence it is closely connected to the basis),
    \item $m,s,t$: always $m<s<t$,
    \item $n_1,n_2,n_3$: this triple does not carry any additional information.
\end{itemize} 
Moreover, to describe an element of $\OK$, we usually use either $a,b,c,d$ for integers or $x,y,z,w$ for rational numbers; a generic element of $\OK$ can be then written, e.g., as $\frac{a}{2}+\frac{b}{2}\sqrt{p}+\frac{c}{2}\sqrt{q}+\frac{d}{2}\sqrt{r}$ or as $x+y\sqrt{n_1}+z\sqrt{n_2}+w\sqrt{n_3}$. 
\end{notation4}
%\vspace{1.5em}
\smallskip

Bearing this notation in mind, we can give the following necessary condition for the totally positive elements of $\OK$. This lemma was derived in \cite[Lemma~3.1]{CLSTZ}. 

\begin{lemma} \label{Lemma:TotPositiveInequalities}
If $x+y\sqrt{n_1}+z\sqrt{n_2}+w\sqrt{n_3} \in \OKPlus$ for some $x,y,z,w\in\Q$, then $x > 0$, $x > |y|\sqrt{n_1}$, $x > |z|\sqrt{n_2}$, $x>|w|\sqrt{n_3}$.
\end{lemma}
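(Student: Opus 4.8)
The plan is to use the explicit formulas for the four embeddings $\sigma_1,\dots,\sigma_4$ of $K=\BQ{n_1}{n_2}$ together with the very definition of total positivity. Write $\alpha=x+y\sqrt{n_1}+z\sqrt{n_2}+w\sqrt{n_3}\in\OKPlus$, where I am free to choose which of the three square-free integers plays which role since any two generate $K$. By assumption $\sigma_i(\alpha)>0$ for all four $i$, and the four embeddings are obtained by independently flipping the signs in front of $\sqrt{n_1}$ and $\sqrt{n_2}$ (with the sign of $\sqrt{n_3}$ being the product of those two sign choices), exactly as displayed for $\sigma_1,\dots,\sigma_4$ in the preliminaries.

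The key step is to add the embeddings in suitable pairs so that two of the three irrational terms cancel and the third survives, isolating one inequality at a time. First I would add all four: $\sigma_1(\alpha)+\sigma_2(\alpha)+\sigma_3(\alpha)+\sigma_4(\alpha)=4x$, and since each summand is positive this gives $4x>0$, hence $x>0$. Next, to get $x>|y|\sqrt{n_1}$, I would pair the embeddings that agree on the sign of $\sqrt{n_1}$ but differ on $\sqrt{n_2}$ (and therefore also on $\sqrt{n_3}$): for instance $\sigma_1(\alpha)+\sigma_3(\alpha)=2x+2y\sqrt{n_1}>0$ and $\sigma_2(\alpha)+\sigma_4(\alpha)=2x-2y\sqrt{n_1}>0$, whence $x>-y\sqrt{n_1}$ and $x>y\sqrt{n_1}$ simultaneously, i.e.\ $x>|y|\sqrt{n_1}$. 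By symmetry, pairing the embeddings that agree on the sign of $\sqrt{n_2}$ gives $x\pm z\sqrt{n_2}>0$ and hence $x>|z|\sqrt{n_2}$, and pairing those that agree on the sign of $\sqrt{n_3}=\sqrt{r}$ gives $x>|w|\sqrt{n_3}$.

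Concretely, the three remaining bounds come from the three sign-pairings: $\{\sigma_1,\sigma_3\}$ and $\{\sigma_2,\sigma_4\}$ isolate $y\sqrt{n_1}$; $\{\sigma_1,\sigma_2\}$ and $\{\sigma_3,\sigma_4\}$ isolate $z\sqrt{n_2}$; and $\{\sigma_1,\sigma_4\}$ and $\{\sigma_2,\sigma_3\}$ isolate $w\sqrt{n_3}$. In each case the two partial sums are $2x\pm 2(\,\cdot\,)>0$, which forces $x$ to exceed the absolute value of the corresponding coefficient times the corresponding square root. Since all four embeddings are assumed strictly positive, every inequality obtained this way is strict.

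There is essentially no hard part here: the statement is a direct consequence of the explicit description of the embeddings, and the only mild care needed is bookkeeping to verify that for each of the three irrational basis elements there really is a pair of embeddings in which it survives while the other two cancel. This is immediate from the sign table for $\sigma_1,\dots,\sigma_4$, since flipping exactly one of the two independent signs always kills two of the three surd terms and doubles the third. I would therefore present the proof as the four displayed sums above, deduce the four inequalities, and remark that the result is independent of which labelling $n_1,n_2,n_3$ we adopt, as guaranteed by Convention~\ref{Conv}.
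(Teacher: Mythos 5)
Your proof is correct: the pairing of embeddings $\{\sigma_1,\sigma_3\}$, $\{\sigma_1,\sigma_2\}$, $\{\sigma_1,\sigma_4\}$ (and their complements) does isolate each surd term with the stated signs, and the resulting strict inequalities give exactly the claim. The paper itself does not reprove this lemma but cites \cite[Lemma~3.1]{CLSTZ}, where the argument is this same elementary averaging of conjugates, so your approach matches the intended one.
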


Note that it implies that, for any fixed $K$, there are only finitely many totally positive integers with a given trace; this also means that for a fixed $\alpha\in\OKPlus$, the equation $\alpha=\beta+\gamma$ has at most finitely many solutions $\beta,\gamma\in\OKPlus$, and they can straightforwardly be found by a computer program. In particular, it is routine to check whether a given element $\alpha$ of a fixed field $K$ is indecomposable (see Subsection \ref{subsec:indecom}).

In a few proofs, we make use of writing $m, s, t$ as products $m=s_0t_0$, $s=m_0t_0$, $t=m_0s_0$ where $m_0$, $s_0$ and $t_0$ are pairwise coprime square-free numbers, equal to $\gcd(s,t)$, $\gcd(m,t)$ and $\gcd(m,s)$, respectively. Note that the inequality $m<s<t$ translates to $m_0>s_0>t_0$, and in basis (B4) we have $m_0 \equiv s_0 \equiv t_0 \pmod{4}$.

\subsection{Quadratic forms} 
The expression 
\[
Q(x_1,x_2,\ldots,x_n)=\sum_{1\leq i\leq j\leq n} a_{ij}x_ix_j,
\]
where $a_{ij}\in\OK$, is called an $n$-ary quadratic form over $\OK$. We often think of $Q$ as acting on vectors from the lattice $\OK^n$ and correspondingly write $Q(\vct{x})$ instead of $Q(x_1,x_2,\ldots,x_n)$. Given a quadratic form $Q$, we can construct a symmetric bilinear form $B_Q(\vct{x},\vct{y})=\frac12\bigl(Q(\vct{x}+\vct{y})-Q(\vct{x})-Q(\vct{y})\bigr)$ such that $Q(\vct{x})=B_Q(\vct{x},\vct{x})$; this form is called the \emph{polar form} of $Q$. 

An $n$-ary quadratic form is said to be \emph{totally positive definite} if 
$Q(\vct{x})\succ 0$
for all nonzero vectors $\vct{x}\in\OK^n$. The form $Q$ is \emph{classical} if $2$ divides $a_{ij}$ for all $i\neq j$. It is called \emph{diagonal} in the case when $a_{ij}=0$ for all $i\neq j$; in such a case, we write $Q$ simply as $\uqf{a_1}\bot\uqf{a_2}\bot\dots\bot\uqf{a_n}$. More generally, the expression $Q_1\bot Q_2$ means  the quadratic form $Q_1(x_1,\dots, x_n)+Q_2(x_{n+1},\dots,x_{n+m})$.

We say that $Q$ is \emph{universal} if it \emph{represents} all the elements belonging to $\OKPlus$, i.e., for every $\alpha\in\OKPlus$ we can find a vector  $\vct{e}\in\OK^n$ such that $Q(\vct{e})=\alpha$.
Moreover, an $n$-ary quadratic form $Q$ is \emph{represented} by an $m$-ary quadratic form $R$ over $\OK$ if $n\leq m$ and there exist $n$-ary linear forms $\ell_i$, $i=1,\dots,m$, with coefficients in $\OK$, such that 
\[Q(x_1,\dots,x_n)=R(\ell_1(x_1,\dots,x_n), \dots, \ell_m(x_1,\dots,x_n)).\] 
(E.g., the form $Q(x_1,x_2)=2x_1^2+3x_2^2$ is represented by the form $R(y_1,y_2,y_3)=y_1^2+3y_2^2+5y_3^2$ over $\OK$ with $K=\Q(\sqrt2)$, because $Q(x_1,x_2)=R(\sqrt2x_1,x_2,0)$.) 
Note that the case $n=1$ yields the usual representation of an element of $\OK$. In general, it is easy to see that the form $R$ represents all the integers which are represented by the form $Q$ (and possibly some more). Thus, we also say \uv{$R$ is \emph{stronger} than $Q$} instead of \uv{$Q$ is represented by $R$}.

From now on, by a \emph{quadratic form}, or just simply a \emph{form}, we mean a totally positive definite classical quadratic form.

Two $n$-ary quadratic forms $Q_1$ and $Q_2$ are called \emph{equivalent}, denoted by $Q_1\cong Q_2$, if there exists an $n\times n$ matrix $M$ consisting of elements of $\OK$ and with $\det{M}\in\UK$, such that $Q_2(\vct{x})=Q_1(M\vct{x})$.  Note that equivalent quadratic forms represent the same elements; in particular, $Q_1$ is universal if and only if $Q_2$ is universal. 

Given an $n$-ary quadratic form $Q$ and a set of vectors $\vct{v}_1, \dots, \vct{v}_m\in\OK^n$, we call the matrix $\bigl(B_Q(\vct{v}_i,\vct{v}_j)\bigr)_{i,j=1}^{m}$
a \emph{Gram matrix of $\vct{v}_1, \dots, \vct{v}_m$ with respect to $Q$}. When this set of vectors coincides with the canonical basis of $K^n$, the quadratic form  can be expressed as
\[
Q(x_1, x_2, \dots,x_n)=\bigl(\begin{matrix}x_1 & x_2 & \cdots & x_n\end{matrix}\bigr)\cdot\begin{pmatrix}
a_{11} & \frac {a_{12}}2 & \cdots & \frac{a_{1n}}2\\
\frac{a_{12}}{2} & a_{22} & \cdots & \frac{a_{2n}}2\\
\vdots & \vdots & \ddots & \vdots \\
\frac{a_{1n}}2 & \frac{a_{2n}}2 & \cdots & a_{nn}
\end{pmatrix}\cdot\begin{pmatrix}x_1 \\ x_2 \\ \vdots \\ x_n\end{pmatrix};
\]
then we speak simply about the \emph{Gram matrix associated to $Q$} or \emph{the matrix of $Q$}.

A form is totally positive definite, resp.\ diagonal, if and only if its Gram matrix has the same property. To examine the total positive definiteness of $Q$, one can use Sylvester's criterion: $Q$ is totally positive definite if and only if the leading principal minors of its Gram matrix are totally positive definite. Moreover, a form is classical if and only if its Gram matrix contains only integral entries.

\subsection{Indecomposable integers} \label{subsec:indecom}
Let $\alpha$ be a totally positive integer in $K$. We say that $\alpha$ is \emph{indecomposable} in $K$ if the equation $\alpha=\beta+\gamma$ cannot be satisfied for any two elements $\beta,\gamma\in\OKPlus$. The structure of indecomposable integers of all the quadratic fields is well-known. Let $n$ be a square-free positive rational integer and set
\[
    \omega_n=\left\{
                \begin{array}{ll}
                  \sqrt{n} \qquad\text{if } n\equiv 2,3\!\!\!\pmod{4},\\
                  \frac{1+\sqrt{n}}{2}\quad\text{if } n\equiv 1\!\!\!\pmod{4}.
                \end{array}
              \right.
\] 
If we take the continued fraction $[u_0,\overline{u_1,u_2,\ldots,u_l}]$ of $-\overline{\omega_n}$ (i.e., of the conjugate element to $-\omega_n$ in the quadratic field $\Q(\sqrt{n})$),
we can consider only parts of this continued fraction and get rational numbers of the form
\[
\frac{p_i}{q_i}=[u_0,u_1,\ldots,u_i].
\]
We can also obtain these numbers from the recurrence relations
\begin{align*}
p_{i+1}=&\;p_{i-1}+u_{i+1}p_i,\\
q_{i+1}=&\;q_{i-1}+u_{i+1}q_i,
\end{align*}
where $p_{-1}=1$, $q_{-1}=0$, $p_{0}=u_0$ and $q_{0}=1$. Having this sequence of $p_i$ and $q_i$, we define elements $\alpha_i=p_i+q_i\omega_n$ called \emph{convergents} of $-\overline{\omega_n}$. However, in addition, we can consider integers of the form $\alpha_{i,k}=\alpha_i+k\alpha_{i+1}$ where $0\leq k\leq u_{i+2}$. These elements are called \emph{semiconvergents}  of $-\overline{\omega_n}$ and belong to $\OKPlus$ if and only if $i$ is odd. In such a case, the elements $\alpha_{i,k}$'s are all the indecomposable integers in $\Q(\sqrt{n})$; up to multiplication by units, there are only finitely many of them (see \cite{Pe,DS}). 

In biquadratic fields, we do not have such a characterization of indecomposable integers. 
The only to us known result is \cite[Th.~2.1]{CLSTZ} which claims that under certain conditions, the indecomposables from quadratic subfields remain indecomposable in the biquadratic field.  The meaning of $p$, $q$ and $r$ in this theorem agrees with the one given by Convention~\ref{Conv}.

\begin{theorem}\label{Theorem:Indecomposables}
Let $K=\BQ p q $ be a biquadratic number field. For $k\in\{p,q,r\}$, we set $M_k=\max\{u_i;\; i\text{ odd}\}$, where $[u_0,\overline{u_1,u_2,\dots, u_{l-1}, u_l}]$ is the continued fraction of $-\overline{\omega_k}$.

\begin{enumerate}[(a)]
\item Let $\alpha\in\Q(\sqrt{p})$ be indecomposable. 
	\begin{itemize}
		\item If $\alpha$ is a convergent of $-\overline{\omega_p}$ and $\sqrt{r}>\sqrt{p}$, then $\alpha$ is indecomposable in $K$.
        \item If $\sqrt{r}>M_p\sqrt{p}$, then $\alpha$ is indecomposable in $K$.
    \end{itemize}

\item Let $\beta\in\Q(\sqrt{q})$ be indecomposable.
	\begin{itemize}
    	\item In the cases (B1), (B2), (B3), $\beta$ is indecomposable in $K$.
        \item In the case (B4), if $\sqrt{r}>\sqrt{q}$ and $\beta$ is a convergent of $-\overline{\omega_q}$, then $\beta$ is indecomposable in~$K$. 
        \item In the case (B4), if $\sqrt{r}>M_q\sqrt{q}$, then $\beta$ is indecomposable in $K$.
    \end{itemize}
\item  Let $\gamma\in\Q(\sqrt{r})$ be indecomposable.
	\begin{itemize}
    	\item If $\gamma$ is a convergent of $-\overline{\omega_r}$ and $\sqrt{p}>\sqrt{r}$, then $\gamma$ is indecomposable in $K$.
        \item If $\sqrt{p}>M_r\sqrt{r}$, then $\gamma$ is indecomposable in $K$.
    \end{itemize}
\end{enumerate}
\end{theorem}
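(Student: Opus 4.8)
The plan is to argue by contradiction, using the fact that indecomposability has a purely geometric meaning. Fix an indecomposable $\alpha$ in one of the quadratic subfields, say $\alpha\in\Q(\sqrt{p})$ for part (a), and suppose it decomposes in $K$ as $\alpha=\beta+\gamma$ with $\beta,\gamma\in\OKPlus$. The key observation is that a decomposition of $\alpha$ in $K$ is the same thing as an element $\beta\in\OK$ with $0\prec\beta\prec\alpha$, i.e.\ $0<\sigma_i(\beta)<\sigma_i(\alpha)$ for all four embeddings $\sigma_i$. So my goal is to show that any such $\beta$ is forced to lie in $\mathcal{O}_{\Q(\sqrt{p})}$, which then contradicts the indecomposability of $\alpha$ in the subfield and finishes the proof.

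First I would exploit the automorphism $\sigma_3$, which is precisely the nontrivial element of $\mathrm{Gal}(K/\Q(\sqrt{p}))$ and hence fixes $\alpha$. Since $\sigma_3$ preserves total positivity and $\sigma_3(\alpha)=\alpha$, the conjugate $\sigma_3\beta$ also satisfies $0\prec\sigma_3\beta\prec\alpha$, so the symmetric combination $\tfrac12\Tr{K/\Q(\sqrt{p})}{\beta}=\tfrac12(\beta+\sigma_3\beta)$ lies in $\Q(\sqrt{p})$ and satisfies $0\prec\tfrac12(\beta+\sigma_3\beta)\prec\alpha$ in the two real embeddings of the subfield (the value from $\sigma_1,\sigma_3$, the conjugate from $\sigma_2,\sigma_4$). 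Writing $\beta=x+y\sqrt{p}+z\sqrt{q}+w\sqrt{r}$ in the integral basis of Remark~\ref{Rem:BasisElements}, this projection equals $x+y\sqrt{p}$. If it happens to lie in $\mathcal{O}_{\Q(\sqrt{p})}$, it is a genuine lattice point strictly between $0$ and $\alpha$, contradicting indecomposability of $\alpha$ and completing the argument.

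The crux is therefore to control when the projection fails to be integral, and here I would run the computation separately for the five basis types. For $\Q(\sqrt{q})$ in the types (B1), (B2), (B3) — part (b) — a direct inspection shows the projection $x+z\sqrt{q}$ is \emph{always} in $\mathcal{O}_{\Q(\sqrt{q})}$, because the congruence conditions on the coordinates of $\beta$ coming from Remark~\ref{Rem:BasisElements} are exactly those defining $\mathcal{O}_{\Q(\sqrt{q})}$; so no further work is needed and part (b) follows at once in these cases. In the remaining situations — $\Q(\sqrt{p})$, $\Q(\sqrt{r})$, and $\Q(\sqrt{q})$ in type (B4) — non-integrality of the projection is possible, and one checks that it forces the coordinate of $\beta$ along the \emph{complementary} square root (namely $\sqrt{r}$ in part (a), $\sqrt{p}$ in part (c)) to be a nonzero half- or quarter-integer, hence of absolute value at least $\tfrac12$ (resp.\ $\tfrac14$).

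The heart of the proof, and the step I expect to be the main obstacle, is to rule out this last case by a size estimate. Applying Lemma~\ref{Lemma:TotPositiveInequalities} to both $\beta$ and $\gamma=\alpha-\beta$ and comparing the four embeddings, the nonzero $\sqrt{r}$-coordinate contributes at least $\tfrac12\sqrt{r}$ to the constant terms, while the continued-fraction description of indecomposables bounds the relevant embeddings of $\alpha$: for a convergent of $-\overline{\omega_p}$ the conjugate embedding is smaller than $1$, and it is exactly here that the hypothesis $\sqrt{r}>\sqrt{p}$ must be strong enough to produce a contradiction; for a general semiconvergent one loses a factor equal to the largest odd-indexed partial quotient, which is why the stronger hypothesis $\sqrt{r}>M_p\sqrt{p}$ is required. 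Carrying this out rigorously needs a careful case distinction across the basis types and a precise comparison of the $\sqrt{q}$- and $\sqrt{r}$-contributions through the relation $\sqrt{p}\sqrt{q}=\gcd(p,q)\sqrt{r}$. Part (c) then follows by the symmetry $p\leftrightarrow r$ (valid in (B1)–(B3), and in (B4) where all of $p,q,r$ are interchangeable), and the (B4) case of part (b) is dispatched by the same size estimate.
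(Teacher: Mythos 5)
The paper does not prove this statement at all: it is quoted verbatim in Subsection~\ref{subsec:indecom} as background, with an explicit citation to \cite[Th.~2.1]{CLSTZ}, and the present paper only \emph{uses} it (and exhibits, in Example~\ref{Exm:Tdecomp}, that its hypotheses cannot simply be dropped). So there is no in-paper proof to compare your attempt against; I can only assess your sketch on its own terms.

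Your skeleton is the natural one, and part of it does work: a decomposition $\alpha=\beta+\gamma$ is indeed the same as an element $\beta\in\OK$ with $0\prec\beta\prec\alpha$, and averaging $\beta$ with its conjugate under the nontrivial element of $\mathrm{Gal}(K/\Q(\sqrt{q}))$ proves the unconditional claim of part (b) for bases (B1)--(B3), because the congruences in Remark~\ref{Rem:BasisElements} force the projection $x+z\sqrt{q}$ to lie in $\mathcal{O}_{\Q(\sqrt{q})}$. However, the two steps carrying the real content of parts (a), (c) and the (B4) case of (b) both fail as written. First, the claimed dichotomy --- either the projection $x+y\sqrt{p}$ is integral, or the $\sqrt{r}$-coefficient $w$ of $\beta$ is a nonzero half- or quarter-integer --- is false in bases (B2), (B3) and in the half-integer stratum of (B4): for instance in (B2) one may have $a$ odd and $b=d=0$, so that $\frac{a}{2}+\frac{b}{2}\sqrt{p}\notin\Z[\sqrt{p}]=\mathcal{O}_{\Q(\sqrt{p})}$ while $w=0$; what is forced instead is a nonzero half-integer $\sqrt{q}$-coefficient, and since $\sqrt{q}$ can be far smaller than $\sqrt{r}$, the hypotheses $\sqrt{r}>\sqrt{p}$ or $\sqrt{r}>M_p\sqrt{p}$ do not obviously enter. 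Second, the size estimate does not close even when $|w|\geq\frac12$: Lemma~\ref{Lemma:TotPositiveInequalities} applied to $\beta$ and to $\alpha-\beta$ gives $x>|w|\sqrt{r}$ and $A-x>|w|\sqrt{r}$, hence only $A>2|w|\sqrt{r}$, where $A=\frac12\Tr{\Q(\sqrt{p})/\Q}{\alpha}$ is the \emph{rational part} of $\alpha$. For a general (semi)convergent this rational part is unbounded relative to $\sqrt{p}$ and $\sqrt{r}$, so no contradiction follows. The fact you want to invoke --- that the conjugate embedding $\overline{\alpha}$ is less than $1$ for a convergent with odd index (and controlled by $M_p$ for a semiconvergent) --- lives in a different pair of embeddings; to use it one must run the positivity inequalities embedding by embedding and bound quantities of the shape $|z\sqrt{q}\pm w\sqrt{r}|$ against $\overline{\alpha}$, which your sketch never sets up. Until the non-integral-projection case is split correctly according to \emph{which} coefficient is forced to be nonzero, and the inequality is anchored to $\overline{\alpha}$ rather than to the trace, the conditional parts of the theorem remain unproved.
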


Nevertheless, as we shall see in Subsection~\ref{Subsec:NewIndeElms}, this theorem does not cover all the indecomposable integers originating from quadratic subfields and, moreover, we will provide an example of an element indecomposable in a quadratic subfield which decomposes in our biquadratic field $K$. Both was already foreshadowed in Theorem~\ref{Thm:Inde}.

%----------------------------------------------------------------------------------------------
\section{Idea of the proof} \label{Sec:IdeaOfProof} \label{Sec:ProofIdea}

In the main part of this article, we do not use the method of escalation as  developed in \cite{BH} (the only exception will be fields containing $2$ or $5$ in Subsections~\ref{Subsec:2}, \ref{Subsec:5} and~\ref{Subsec:Specific}). Instead of that, we follow the ideas from \cite{CKR}: Assume that $Q$ is a universal classical totally positive definite ternary quadratic form over $\OK$. Obviously $Q$ has to represent the number $1$, and hence $Q$ can be orthogonally split into $Q\cong\uqf{1}\bot Q_0$ (see Corollary~\ref{Cor:Stepeni}\eqref{Cor:Stepeni2}).
This step is one of the key ingredients for the proof, as it can actually be used repeatedly any time when a totally positive unit (not necessarily $1$) happens to be represented by the form $Q$ or any of its subforms.

Now it is easy to see that if we have \uv{enough} totally positive units which do not differ (multiplicatively) from each other by a square, then the nonuniversality of $Q$ follows: Suppose that $\ve$, $\ve'$ are two totally positive units such that $\ve,\ve',\ve\ve'\notin\UKctv$ (in particular, $\ve, \ve'\neq1$ and $\ve\neq\ve'$). The unary form $\uqf1$ clearly represents all squares, and in particular all elements of $\UKctv$, but it does not represent $\ve$. We will see in Lemma~\ref{Lemma:TrivDecomp} that a totally positive unit cannot be written as a sum of two elements of $\OKPlus$; thus, $\ve$ has to be represented by the binary form $Q_0$. Therefore, we can repeat the splitting and write $Q_0\cong\bqf{\ve}{\gamma}$ for some $\gamma\in\OKPlus$, i.e., $Q \cong \tqf{1}{\ve}{\gamma}$. After that, we apply the same arguments for $\ve'$. Note that $\uqf{\ve}$ represents only elements from $\ve\OKctv$, and so it does not represent $\ve'$; we obtain that $\ve'$ must be represented by $\uqf{\gamma}$, which easily gives $Q\cong\tqf{1}{\ve}{\ve'}$. But in such a case, $\ve\ve'$ is another nonsquare totally positive unit, and it is not represented by $Q$ for the same reasons. Therefore, under the given assumptions, no  classical totally positive definite ternary quadratic form can be universal, as we have actually shown that no such form can represent all the elements $1, \ve, \ve', \ve\ve'$ at once. 

The discussion above indicates that the complexity of the problem depends heavily on the size of the factor group $\sfrac{\UKPlus}{\UKctv}$. 
Using Dirichlet's unit theorem, one can easily see that the group $\sfrac{\UK}{\UKctv}$ has $16$ elements. Thus, as the group $\sfrac{\UK}{\UKPlus}$ is obviously nontrivial, we deduce from the equality $\abs{\sfrac{\UK}{\UKctv}}=\abs{\sfrac{\UK}{\UKPlus}}\cdot\abs{\sfrac{\UKPlus}{\UKctv}}$
that the size of the factor group $\sfrac{\UKPlus}{\UKctv}$ can be only $1, 2, 4$ or $8$.  Let us consider the following three cases:
\begin{enumerate}[(I)]
    \item $\left|\sfrac{\UKPlus}{\UKctv}\right|>2$; in this case we can find $\ve, \ve'$ as above,
    \item $\left|\sfrac{\UKPlus}{\UKctv}\right|=2$; here we can find a totally positive unit $\ve$ such that $\{1, \ve\}$ is a system of representatives of the factor group,
    \item $\left|\sfrac{\UKPlus}{\UKctv}\right|=1$; all units are squares, thus the only relevant unit in this case is $1$.
\end{enumerate}
We have solved the case (I) already, there does not exist any universal (classical, totally positive definite) ternary quadratic form. Lemma~\ref{Lemma:RootsOfUnits} hints that this is often the case.

In the case (II),  it is sufficient to consider the quadratic forms $\tqf{1}{\ve}{\gamma}$ for $\gamma\in\OKPlus$ (including the cases $\gamma=1$ and $\gamma=\ve$). The obvious question is whether such a form represents $2$. Putting aside the case $\sqrt2\in K$, we have two possibilities: either $\uqf{\ve}$ does not represent $2$, and hence $\gamma$ has to be either $1$ or $2$ (see Lemma~\ref{Lemma:TrivDecomp}), or $\uqf{\ve}$ does represent $2$, and we have to look for another number which is not yet represented. Note that $\uqf{\ve}$ represents $2$ if and only if $2\ve$ is a square in $\OK$; we will use this condition significantly in both cases. It is also worth pointing out Proposition~\ref{Prop:StrongerForms} which summarizes some arguments used repeatedly throughout the whole proof.

The most challenging part of the proof is, of course, the case (III). We can still be lucky enough to obtain a diagonalizable binary quadratic form $Q_0$, and hence we consider quadratic forms $\tqf{1}{\beta}{\gamma}$ separately; but in general, $Q_0$ has the form 
$\alpha' y^2+2\beta' yz + \gamma' z^2$ with $\beta'\neq0$.  The problems here are caused mainly by the fact that $\beta'$ (and the term $2\beta' yz$ in general) does not have to be totally positive, which prevents us from using estimations.

It should be clear now from the ideas above that in the cases with \uv{not enough totally positive units}, we need to find other elements which are not squares (because squares are  already represented by $\uqf1$), and which are difficult to express as a sum of two elements of $\OKPlus$. In other words, we need to find some nonsquare indecomposable elements; this is the main purpose of the next section. 

%----------------------------------------------------------------------------------------------
\section{Preparation for the proof} \label{Sec:BeautifulAndUgly}
In this section, we introduce several tools that we need for the proof of Theorem~\ref{Thm:Main}. As indicated at the end of the previous section, we want to know whether some given elements of $\OK$ are squares and how to express some numbers as a sum of elements belonging to $\OKPlus$. The latter also includes the problematics of indecomposable elements in $K$; we provide a proof of Theorem~\ref{Thm:Inde}. Furthermore, we take a more detailed look at representations by unary forms. Finally, for the sake of nondiagonalizable forms in the case (III), we  include some lemmas about quadratic lattices.

%----------------------------------
\subsection{Squares} \label{Subsec:Squares}

If $\alpha\in\OK$ has the property that, when considered as an element of the field $K$, it is a square, then $\alpha$ is actually a square in the ring $\OK$, i.e., $\sqrt{\alpha}\in\OK$. However obvious this may seem, it is not automatic; in fact, it is a consequence of $\OK$ being algebraically closed in $K$. In the following, we will slightly abuse the language, and speak about algebraic integers \emph{being} or \emph{becoming a square in the (biquadratic) field $K$}, by which we actually mean that they are squares in the ring $\OK$.

The following lemma generalizes \cite[Lemma 4.1]{CLSTZ}. Note that in the statement, the meaning of $n_3$ is given by Convention~\ref{Conv} as $\frac{n_1n_2}{\gcd(n_1,n_2)^2}$.

\begin{lemma} \label{Lemma:MartinSquares}
Let $F=\Q(\sqrt{n_1})$ and $K=\BQ{n_1}{n_2}$. \begin{enumerate}
    \item Suppose that $\alpha\in\OK[F]$ is not a square in the field $F$ but becomes a square in $K$. Then if $\beta=x+y\sqrt{n_1}+z\sqrt{n_2}+w\sqrt{n_3}\in\OK[K]$ is such that $\alpha=\beta^2$, it must hold that $x=y=0.$ \label{Lemma:MartinSquaresItem1}
    \item More generally, if $\beta=x+y\sqrt{n_1}+z\sqrt{n_2}+w\sqrt{n_3} \in K$ (not necessarily integral) satisfies $\beta^2\in\Q(\sqrt{n_1})$, then either $x=y=0$ or $z=w=0$.
\end{enumerate}
\end{lemma}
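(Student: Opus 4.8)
The plan is to compute $\beta^2$ explicitly in terms of the $\Q$-basis $(1,\sqrt{n_1},\sqrt{n_2},\sqrt{n_3})$ and read off the constraint that its $\sqrt{n_2}$- and $\sqrt{n_3}$-components vanish. Writing $\beta = x+y\sqrt{n_1}+z\sqrt{n_2}+w\sqrt{n_3}$ and using $\sqrt{n_2}\cdot\sqrt{n_3}=\gcd(n_1,n_2)\sqrt{n_1}$ (since $n_2 n_3 = n_1\cdot\gcd(n_1,n_2)^2$, up to the square-free normalization fixed in Convention~\ref{Conv}), the cross terms in $\beta^2$ distribute over the four basis vectors. The coefficient of $\sqrt{n_2}$ in $\beta^2$ is $2xz$ (the term $2yw\sqrt{n_1}\sqrt{n_3}$ lands on $\sqrt{n_2}$ up to a rational factor, so more precisely it is a rational combination $2xz + (\text{const})\cdot 2yw$), and the coefficient of $\sqrt{n_3}$ is the analogous $2xw + (\text{const})\cdot 2yz$. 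The hypothesis $\beta^2\in\Q(\sqrt{n_1})$ says exactly that both of these coefficients are zero.

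First I would carry out this multiplication carefully, tracking which products of basis elements contribute to the $\sqrt{n_2}$ and $\sqrt{n_3}$ slots; the key algebraic fact is that $\sqrt{n_1}\sqrt{n_2}$ is a rational multiple of $\sqrt{n_3}$ and $\sqrt{n_1}\sqrt{n_3}$ is a rational multiple of $\sqrt{n_2}$, while $\sqrt{n_2}\sqrt{n_3}$ is a rational multiple of $\sqrt{n_1}$. This gives two homogeneous linear equations in the pairs $(xz, yw)$ and $(xw, yz)$ with nonzero rational coefficients. Next I would rearrange these two vanishing conditions into the clean symmetric system
\[
xz + \lambda\, yw = 0, \qquad xw + \lambda\, yz = 0,
\]
where $\lambda = \gcd(n_1,n_2)$ arises from the normalization of the radicals.

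The main step is to deduce from this system that either $x=y=0$ or $z=w=0$. Adding and subtracting the two equations yields $(x+y)(z+\lambda w)=0$ and $(x-y)(z-\lambda w)=0$, or one can multiply them and factor; the cleanest route is to observe that the system forces $(x^2-\lambda^2 y^2)zw$-type relations. I would argue as follows: multiply the first equation by $w$ and the second by $z$ to get $xzw = -\lambda y w^2$ and $xwz = -\lambda y z^2$, whence $\lambda y(w^2 - z^2)=0$; similarly eliminating to get $\lambda x(z^2-w^2)=0$ or the analogous relation in $y$. Combining these, if the factor $(z^2 - \lambda^2 w^2)$-analogue is nonzero one is forced into $x=y=0$, and otherwise into $z=w=0$. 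I would handle the degenerate sub-cases ($z=0$, $w=0$, $x=0$, or $y=0$ individually) directly from the original two equations, since each immediately propagates to kill the partner variable.

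The hard part will be bookkeeping the exact rational constants multiplying the radicals and confirming they are nonzero, so that the two coefficient equations genuinely have the asserted form and no spurious solutions slip through; in particular I must verify that $\lambda\neq 0$ and that no coincidence among $n_1,n_2,n_3$ collapses the basis. Once the system is correctly set up with nonzero coefficients, the factorization argument is elementary, and part~\eqref{Lemma:MartinSquaresItem1} follows as the special case where $\beta^2=\alpha\in F$ is assumed to lie in $F\setminus F^{\square}$: the alternative $z=w=0$ would put $\beta\in F$ and make $\alpha$ a square already in $F$, contradicting the hypothesis, so $x=y=0$ must hold.
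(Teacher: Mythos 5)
Your overall strategy (expand $\beta^2$ over the $\Q$-basis, set the $\sqrt{n_2}$- and $\sqrt{n_3}$-coefficients to zero, and solve the resulting bilinear system) can be made to work, but the ``clean symmetric system'' you propose is set up incorrectly, and with that setup the argument proves a false statement. Using $g=\gcd(n_1,n_2)$, the multiplication rules are $\sqrt{n_1}\sqrt{n_2}=g\sqrt{n_3}$, $\sqrt{n_1}\sqrt{n_3}=\frac{n_1}{g}\sqrt{n_2}$, $\sqrt{n_2}\sqrt{n_3}=\frac{n_2}{g}\sqrt{n_1}$, so the two vanishing conditions are $xz+\frac{n_1}{g}yw=0$ and $xw+g\,yz=0$: the two constants are \emph{different} ($\frac{n_1}{g}$ versus $g$), not both equal to $\lambda=\gcd(n_1,n_2)$ as you write. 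This matters: the symmetric system $xz+\lambda yw=0$, $xw+\lambda yz=0$ has the nonzero solution $x=\lambda$, $y=1$, $z=1$, $w=-1$, so your factorizations such as $(x+y)(z+\lambda w)=0$ (which in any case does not equal the sum of your two equations) cannot force $x=y=0$ or $z=w=0$. What actually closes the argument is that the product of the two correct constants is $\frac{n_1}{g}\cdot g=n_1$: multiplying the equations gives $zw\,(x^2-n_1y^2)=0$, and since $n_1>1$ is square-free, $x^2=n_1y^2$ with $x,y\in\Q$ forces $x=y=0$; the remaining degenerate cases ($zw=0$) then propagate as you describe. So the irrationality of $\sqrt{n_1}$ is an essential input that your write-up never invokes.

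For comparison, the paper sidesteps all of this bookkeeping by writing $\beta=A+B\sqrt{n_2}$ with $A=x+y\sqrt{n_1}$ and $B=z+\frac{w}{g}\sqrt{n_1}$, both in $\Q(\sqrt{n_1})$; then $\beta^2=(A^2+n_2B^2)+2AB\sqrt{n_2}$, and $\beta^2\in\Q(\sqrt{n_1})$ is equivalent to $AB=0$, i.e.\ $x=y=0$ or $z=w=0$. Treating $K$ as a quadratic extension of $F=\Q(\sqrt{n_1})$ collapses the four-variable system to a single product vanishing and hides the $n_1$-nonsquare input inside the fact that $1,\sqrt{n_2}$ are linearly independent over $F$. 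Your reduction of part (1) to part (2) is correct and identical to the paper's.
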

\begin{proof}
Clearly it suffices to prove the second statement. We can write $\beta=A+B\sqrt{n_2}$ with $A=x+y\sqrt{n_1}$ and $B=z+\frac{w}{\gcd(n_1,n_2)}\sqrt{n_1}$; in particular, $A,B\in\Q(\sqrt{n_1})$. Then it holds that $\beta^{2} = (A^2+n_2 B^2) + 2AB\sqrt{n_2}$, thus one sees that $\beta^2\in\Q(\sqrt{n_1})$ is equivalent to $2AB=0$. If $A=0$, then $x=y=0$, and if $B=0$, then $z=w=0$. This completes the proof.
\end{proof}

As a corollary of this lemma, we obtain a simple yet powerful criterion for quadratic elements which become a square in the biquadratic field.

\begin{corollary}\label{Corollary:BecomingSquare}
Let $F=\Q(\sqrt{n_1})$ and $\alpha\in\OK[F]$.  Suppose that $\alpha$ is not a square in $F$ but becomes a square in $K=\BQ{n_1}{n_2}$.
Then every odd divisor of $\gcd(n_2,n_3)$ divides $\alpha$ as well.
\end{corollary}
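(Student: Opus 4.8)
The plan is to reduce everything to a single odd prime $\ell\mid\gcd(n_2,n_3)$ and to read the divisibility off an explicit formula for $\sqrt\alpha$. By Lemma~\ref{Lemma:MartinSquares}\eqref{Lemma:MartinSquaresItem1}, writing $\alpha=\beta^2$ with $\beta=x+y\sqrt{n_1}+z\sqrt{n_2}+w\sqrt{n_3}\in\OK$ forces $x=y=0$, so $\beta=z\sqrt{n_2}+w\sqrt{n_3}$. I would then compute
\[
\beta^2=\bigl(z^2 n_2+w^2 n_3\bigr)+2zw\sqrt{n_2 n_3}.
\]
The key simplification is that, by the relation in Convention~\ref{Conv} applied to the pair $n_2,n_3$ (i.e.\ $n_1=\frac{n_2n_3}{\gcd(n_2,n_3)^2}$), we have $\sqrt{n_2 n_3}=\gcd(n_2,n_3)\sqrt{n_1}$. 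Hence, writing $\alpha=X+Y\sqrt{n_1}$, I obtain the explicit coordinates $X=z^2 n_2+w^2 n_3$ and $Y=2zw\gcd(n_2,n_3)$.

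The heart of the argument is then an $\ell$-adic valuation estimate. Since $\alpha\in\OK[F]$, both $2X$ and $2Y$ lie in $\Z$, and it suffices to show $\ell\mid 2X$ and $\ell\mid 2Y$: indeed $\tfrac\alpha\ell=\tfrac X\ell+\tfrac Y\ell\sqrt{n_1}$ then still has $\tfrac{2X}\ell,\tfrac{2Y}\ell\in\Z$, and the extra condition $X-Y\in\Z$ defining $\OK[F]$ when $n_1\equiv1\pmod4$ is preserved because $\ell$ is odd. To obtain the divisibilities, I would use that by Remark~\ref{Rem:BasisElements} every element of $\OK$ has coordinates in $\tfrac14\Z$ with respect to $(1,\sqrt{n_1},\sqrt{n_2},\sqrt{n_3})$; in particular the denominators of $z,w$ are powers of $2$, so $v_\ell(z),v_\ell(w)\ge0$. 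As $\ell$ divides each of $n_2$, $n_3$ and $\gcd(n_2,n_3)$, every monomial $2z^2 n_2$, $2w^2 n_3$ of $2X$ and the single term $4zw\gcd(n_2,n_3)$ of $2Y$ has $v_\ell\ge1$; thus $\ell\mid 2X$ and $\ell\mid 2Y$.

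For a general odd divisor $d$ of $\gcd(n_2,n_3)$, I would run this valuation estimate at every prime $\ell\mid d$ simultaneously; since $\gcd(n_2,n_3)$ is square-free these primes are distinct, so $d\mid 2X$ and $d\mid 2Y$, and the same integrality-and-parity argument gives $\tfrac\alpha d\in\OK[F]$, that is, $d\mid\alpha$.

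The computation is short, so I expect the only delicate point to be the bookkeeping around the prime $2$: one must check that dividing by the odd number $d$ really returns an element of $\OK[F]$, i.e.\ that the half-integer structure of $\alpha$ (and, for $n_1\equiv1\pmod4$, the congruence $X-Y\in\Z$) survives. This is exactly where it is used that $d$ is odd and that the denominators coming from the integral bases of $K$ are powers of $2$, hence invisible to $v_\ell$.
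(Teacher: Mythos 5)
Your proof is correct and follows essentially the same route as the paper: apply Lemma~\ref{Lemma:MartinSquares}\eqref{Lemma:MartinSquaresItem1} to reduce to $\beta=z\sqrt{n_2}+w\sqrt{n_3}$, expand $\beta^2$ using $\sqrt{n_2n_3}=\gcd(n_2,n_3)\sqrt{n_1}$, and observe that both coordinates pick up every odd prime dividing $\gcd(n_2,n_3)$. The only cosmetic difference is that you verify the final divisibility by $\ell$-adic valuations (noting the denominators are powers of $2$), where the paper does a short parity case analysis on the integer coefficients $c\equiv d\pmod 2$.
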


\begin{proof}
From Lemma~\ref{Lemma:MartinSquares}(\ref{Lemma:MartinSquaresItem1}) we know that 
\[\alpha=\biggl(\frac{c\sqrt{n_2}+d\sqrt{n_3}}{2}\biggr)^2=\frac{c^2n_2+d^2n_3}{4}+\frac{cd\gcd(n_2,n_3)}{2}\sqrt{n_1}\] 
where the form of integral bases (B1)--(B4) ensures $c,d\in\Z$, $c\equiv d\pmod2$. If $c, d$ are even, then clearly both expressions $\frac14(c^2n_2+d^2n_3)$ and $\frac12cd\gcd(n_2,n_3)\sqrt{n_1}$ are divisible by $\gcd(n_2,n_3)$; if they are odd, then still they are both divisible by all the odd divisors of $\gcd(n_2,n_3)$.
\end{proof}

%----------------------------------
\subsection{Additive decompositions of rational integers} \label{Subsec:AddDecomp}
In the study of universality of quadratic forms in biquadratic fields, we repeatedly discuss whether our form can represent some (suitably chosen) element. This element is usually either a rational integer or an indecomposable element in $K$. In the first case, we often need to know in what ways we can express our chosen number as a sum of two elements $\alpha,\beta\in\OKPlus\cup\{0\}$ -- i.e., we examine its \emph{additive decompositions}. If these elements are also rational integers, we will say that this decomposition is \emph{trivial}. We
will be particularly interested in the cases when one of $\alpha$ and $\beta$ is a square.

First of all, let us focus on the additive decompositions of numbers $2,3$ and $5$.

\begin{lemma} \label{Lemma:TrivDecomp} \label{Rem:TrivDecompConcrete}
In a biquadratic field $K$:
\begin{enumerate}
    \item All (totally positive) units are indecomposable. 
    \item The number $2$ can always decompose only trivially as $1+1$ or $0+2$.
    \item If $\sqrt5\notin K$, then $3$ can decompose only trivially as $1+2$ or $0+3$.\\
    The only nontrivial decomposition is $\frac{3+ \sqrt5}{2} + \frac{3- \sqrt5}{2}$.
    \item If $\sqrt2, \sqrt3, \sqrt5, \sqrt{13}, \sqrt{17},\sqrt{21}\notin K$, then $5$ decomposes only trivially as $2+3$, $1+4$ or $0+5$.\\
    Up to embeddings, the list of all nontrivial decompositions is as follows: 
    \begin{itemize}
     \item $(2+\sqrt2)+(3-\sqrt2)$,
     \item $(2+\sqrt3)+(3-\sqrt3)$,
     \item $\frac{3+\sqrt5}{2} + \frac{7-\sqrt5}{2}$,
     \item $\frac{5+\sqrt5}{2} + \frac{5-\sqrt5}{2}$,
     \item $\frac{5+\sqrt{13}}{2} + \frac{5-\sqrt{13}}{2}$,
     \item $\frac{5+\sqrt{17}}{2} + \frac{5-\sqrt{17}}{2}$,
     \item $\frac{5+\sqrt{21}}{2} + \frac{5-\sqrt{21}}{2}$,
     \item $\bigl(2 + \frac{\sqrt2+\sqrt6}{2}\bigr)+\bigl(3 - \frac{\sqrt2+\sqrt6}{2}\bigr)$.
 \end{itemize}
\end{enumerate}
\end{lemma}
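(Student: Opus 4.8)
The plan is to dispatch part~(1) by a single norm inequality, and to handle parts~(2)--(4) by one uniform argument: bound the irrational coefficients of a summand using total positivity, and then run a finite case analysis over the integral bases.

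For part~(1), let $\ve$ be a totally positive unit, so that $\norm{K/\Q}{\ve}=1$. If $\ve=\beta+\gamma$ with $\beta,\gamma\in\OKPlus$, then $\norm{K/\Q}{\beta}$ and $\norm{K/\Q}{\gamma}$ are positive rational integers, hence each at least $1$. Feeding this into the generalized Hölder inequality~\eqref{Eq:NormSum} (with degree $4$) gives
\[
1=\sqrt[4]{\norm{K/\Q}{\ve}}\geq\sqrt[4]{\norm{K/\Q}{\beta}}+\sqrt[4]{\norm{K/\Q}{\gamma}}\geq 2,
\]
a contradiction; so no totally positive unit decomposes.

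For parts~(2)--(4) I fix $N\in\{2,3,5\}$ and suppose $N=\alpha+\beta$ with $\alpha,\beta\in\OKPlus$ and $\alpha\notin\Q$ (the decompositions with $\alpha,\beta\in\Z$ are precisely the trivial ones listed). Writing $\alpha=x+y\sqrt m+z\sqrt s+w\sqrt t$ with $m<s<t$, the summand $\beta$ has rational part $N-x$ and irrational coefficients $-y,-z,-w$. Applying Lemma~\ref{Lemma:TotPositiveInequalities} to both $\alpha$ and $\beta$ and adding the two inequalities attached to each radical yields
\[
|y|\sqrt m<\tfrac N2,\qquad |z|\sqrt s<\tfrac N2,\qquad |w|\sqrt t<\tfrac N2 .
\]
Since $m\ge2$, $s\ge3$, $t\ge6$, these inequalities confine $y,z,w$ to finitely many values, each of which lies in $\tfrac14\Z$ by Remark~\ref{Rem:BasisElements}. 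The residual inequalities $x>|y|\sqrt m$ and $N-x>|y|\sqrt m$ (and their analogues for $z,w$) then trap the rational part $x$ in a short interval, leaving only finitely many candidate pairs $(\alpha,\beta)$ to inspect.

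The decisive sieve among these candidates is the system of congruences in Remark~\ref{Rem:BasisElements}, which couples the four coordinates according to the basis type (B1)--(B4). For $N=2$ the bounds read $|y|\sqrt m,|z|\sqrt s,|w|\sqrt t<1$; a non-integral value of one coefficient is integral only when the coupling forces a matching non-integral coefficient on a radicand too large for the $N=2$ bound (the rule $p\equiv r\pmod4$ guarantees the coupled pair contains a radicand exceeding~$4$, and the secondary B4 form, having all four numerators odd, puts a quarter on $\sqrt t$ with $t\ge17$). Hence $y=z=w=0$ and only the trivial decomposition survives. For $N=3$ and $N=5$ the same bookkeeping leaves exactly the listed elements: a candidate with a single nonzero radical coefficient is a decomposition inside a quadratic subfield $\Q(\sqrt n)$ and forces $\sqrt n\in K$ for the relevant $n$ (namely $n=5$ for $N=3$, and $n\in\{5,13,17,21\}$ together with $n\in\{2,3\}$ for $N=5$), while the unique candidate with two simultaneously nonzero radical coefficients is the genuinely biquadratic decomposition $\bigl(2+\tfrac{\sqrt2+\sqrt6}2\bigr)+\bigl(3-\tfrac{\sqrt2+\sqrt6}2\bigr)$, which needs $\sqrt2,\sqrt6\in K$. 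Thus, under the stated exclusions of the various $\sqrt n\in K$, only trivial decompositions remain.

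The main obstacle is the volume and organization of the case analysis for $N=5$: one must run through all five integral-basis types and, within each, through every sign and size choice for $(y,z,w)$ permitted by the bounds, while correctly tracking which of $m,s,t$ plays the role of $p,q,r$ (this depends on residues modulo~$4$, and $p\equiv r\pmod4$ is what makes the coupled pairs behave). The genuinely delicate point is \emph{completeness}: one must not overlook the biquadratic decomposition above, in which two radical coefficients are simultaneously half-integers — exactly the situation the congruences rule out for a single-radical support. I would therefore organize the enumeration by basis type and by the support $\{i:\text{the coefficient of the }i\text{-th radical is nonzero}\}$, treating the single-radical supports (which reduce to the classical quadratic computation in $\Q(\sqrt n)$) separately from the lone two-radical support that produces the biquadratic example.
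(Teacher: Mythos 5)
Your overall strategy coincides with the paper's: part (1) via the norm inequality \eqref{Eq:NormSum}, and parts (2)--(4) via the coefficient bounds obtained by applying Lemma~\ref{Lemma:TotPositiveInequalities} to both summands, followed by a finite enumeration filtered through the integral-basis congruences of Remark~\ref{Rem:BasisElements}. (The paper proves part (2) by the norm inequality alone, which works in any totally real field and is what justifies the unnumbered remark after the lemma, but your coefficient-bound route also succeeds there.)

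The one genuine gap is in your enumeration for $N=5$: you classify candidates by the support of the radical coefficients and discuss only supports of size one (the quadratic-subfield decompositions) and the ``lone'' support of size two (the $\frac{\sqrt2+\sqrt6}{2}$ example), but you never address supports of size three. That case is not vacuous. Your bound $|w|\sqrt t<\tfrac52$ with $w\in\tfrac14\Z$ permits $|w|=\tfrac14$ whenever $t<100$, so fields such as $\BQ{5}{13}$ (where $t=65$) and $\BQ{21}{33}$ (where $t=77$) admit a priori candidates with all three radical coefficients equal to $\pm\tfrac14$, and $\BQ23$ admits candidates with all three equal to $\pm\tfrac12$; since the lemma claims an exhaustive list of decompositions, these must be ruled out even though some of the ambient fields are excluded by the hypotheses of the ``trivial only'' clause. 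This is where the paper spends the largest part of its proof of (4): it combines the trace of $\omega^2$ with the bound on the rational part coming from $\beta\succ0$ to derive inequalities of the shape $11>\sqrt m+\sqrt s+\sqrt t$ (quarter-integer case) and $6>\sqrt m+\sqrt s+\sqrt t$ (half-integer case), which fail everywhere except $\BQ23$, and that last field is then eliminated by checking that the surviving elements are either non-integral or not totally positive. Your two-radical step is likewise stated as a conclusion rather than an argument --- the paper first shows the two coefficients there must be half-integers, then isolates the candidate pairs $\{n_1,n_2\}\in\bigl\{\{2,3\},\{2,5\},\{2,6\},\{3,5\}\bigr\}$ and discards all but $\{2,6\}$ by integrality --- but that is the kind of finite check your plan explicitly defers; the missing size-three support is the substantive omission, because as described your sieve would not detect a decomposition living there.
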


\begin{remark*}
As we will see from the proof, parts (1) and (2) actually hold for \emph{any} totally real field $K$, not only for the biquadratic ones.
\end{remark*}

\begin{proof}
 Part (1) follows directly from the inequality \eqref{Eq:NormSum}, as the norm of any element of $\OKPlus$ is greater or equal to $1$. The same idea is used in part (2): If $2=\alpha+\beta$ for $\alpha,\beta \in \OKPlus$, then $2 = \sqrt[N]{\norm{K/\Q}{\alpha+\beta}}\geq\sqrt[N]{\norm{K/\Q}{\alpha}}+\sqrt[N]{\norm{K/\Q}{\beta}} \geq 1+1$; we see that $\alpha$ and $\beta$ must be units, and, moreover, equality was attained in \eqref{Eq:NormSum}, thus $\frac{\alpha}{\beta} \in \Q$, which shows $\alpha=\beta=1$.

Part (2) could also have been obtained as an easy corollary of part (3): From the list of all $\alpha \succcurlyeq 0$ such that $3 \succcurlyeq \alpha$, only $0, 1$ and $2$ satisfy $2 \succcurlyeq \alpha$ as well. In a similar fashion, we can obtain (3) from the full statement of (4).

For the proof of (4), suppose $5 = \alpha + \beta$ for $\alpha, \beta \in \OKPlus \cup \{0\}$. If $\alpha\in\Q$, we obtain precisely the trivial decompositions. For $\alpha\notin \Q$, we consider all possible cases:

First, $\alpha, \beta \in \Q\big(\sqrt{n})$ for some quadratic subfield of $K$. Without loss of generality, suppose $\Tr{\Q(\sqrt{n})/\Q}{\alpha}\leq\Tr{\Q(\sqrt{n})/\Q}{\beta}$, hence $\alpha = \frac{a}{2} + \frac{b}{2}\sqrt{n}$ where $0 < \frac{a}{2} \leq \frac52$; by choosing a suitable embedding we can ensure $b>0$. By Lemma~\ref{Lemma:TotPositiveInequalities} (which will be used repeatedly in the whole following proof) we have $a > b\sqrt{n}$. If $n \not\equiv 1 \pmod{4}$, then $\frac{a}{2},\frac{b}{2} \in \Z$, hence the only solution of the inequalities $0<\frac{a}{2} \leq \frac52$, $a > b\sqrt{n} > 0$ is $\frac{a}{2}=2$, $\frac{b}{2}=1$, $n\in\{2,3\}$. If $n\equiv 1 \pmod{4}$, half-integers are also allowed (provided that $a\equiv b \pmod{2}$); the same inequalities are satisfied only by $\frac{a}{2}=\frac32$, $\frac{b}{2}=\frac12$, $n=5$ and by $\frac{a}{2}=\frac52$, $\frac{b}{2}=\frac12$, $n\in\{5, 13, 17, 21\}$.

The second case is when, in the decomposition, exactly two of the three numbers $\sqrt{m}$, $\sqrt{s}$, $\sqrt{t}$ are multiplied by a nonzero coefficient -- let us denote these two square roots by $\sqrt{n_1}$, $\sqrt{n_2}$. By looking at the bases (B1)--(B4), observe that the corresponding coefficients are half-integers since if quarter-integers are involved, all three square roots have nonzero coefficients. By first choosing $\alpha$ such that $\Tr{K/\Q}{\alpha}\leq\Tr{K/\Q}{\beta}$ and then using a suitable embedding, we have $\alpha = \frac{a}{2} - \frac{b_1}{2}\sqrt{n_1}  - \frac{b_2}{2}\sqrt{n_2}$ for $a,b_1,b_2 >0$ and $\frac{a}{2} \leq \frac52$. Thus $\alpha \succcurlyeq 0$ if and only if $a > b_1\sqrt{n_1} + b_2\sqrt{n_2}$. From the form of the possible integral bases (B1)--(B4) we see that not all of $a$, $b_1$, $b_2$ can be odd at the same time. Thus either $a$ is even, $a=2a'$, and $4 \geq 2a' > b_1\sqrt{n_1}+b_2\sqrt{n_2}$, or without loss of generality $b_1$ is even and $5 \geq a > 2\sqrt{n_1}+\sqrt{n_2}$. The only square-free integers $n_1, n_2$ which may satisfy the latter inequality are $2,3$ in any order; but in that case we are in the integral basis (B1) which requires $a$ to be even; that is impossible since $4 \not> 2\sqrt2 + \sqrt3$. So we are left with the former inequality $4 \geq 2a' > b_1\sqrt{n_1}+b_2\sqrt{n_2}$; here necessarily $a'=2$, $b_1=b_2=1$ and $4>\sqrt{n_1}+\sqrt{n_2}$. This is satisfied by $\{n_1, n_2\}$ equal to $\{2,3\}$, $\{2,5\}$, $\{2,6\}$ and $\{3,5\}$; however, we have to compare these possibilities for $\alpha=2 - \frac12(\sqrt{n_1}+\sqrt{n_2})$ with the corresponding integral bases (B1)--(B4) to see whether they belong to $\OK$. In fact, only $2 - \frac12(\sqrt{2}+\sqrt{6})$ is an algebraic integer, so the only  decomposition of this type is $5 = \bigl(2 - \frac{\sqrt2+\sqrt6}{2}\bigr)+\bigl(3 + \frac{\sqrt2+\sqrt6}{2}\bigr)$ (up to embedding).

In the third and last case, all of $\sqrt{m}$, $\sqrt{s}$ and $\sqrt{t}$ are multiplied by nonzero coefficients. In this case, instead of requiring $\Tr{K/\Q}{\alpha}\leq \Tr{K/\Q}{\beta}$, we take $\alpha$ to be the summand which, in a suitable embedding, has the form $\frac{a}{4} - \frac{b}{4}\sqrt{m}-\frac{c}{4}\sqrt{s}-\frac{d}{4}\sqrt{t}$ for $b,c,d>0$ (one of the two summands has this property). Then $\alpha \succ 0$ if and only if 
\begin{equation} \label{eq:ineqq}
a > b\sqrt{m}+c\sqrt{s}+d\sqrt{t};
\end{equation}
to arrive at a contradiction, we distinguish two subcases:

For the first subcase, suppose that  quarter-integers are involved, i.e., all of $a,b,c,d$ are odd. This necessarily means that $m\equiv s \equiv t \equiv 1 \pmod{4}$, which easily implies $t\geq 65$. Applying Lemma~\ref{Lemma:TotPositiveInequalities} to $\beta = (5 - \frac{a}{4}) + \frac{b}{4}\sqrt{m}+\frac{c}{4}\sqrt{s}+\frac{d}{4}\sqrt{t}\succ 0$, we arrive at $(5-\frac{a}{4})> |\frac{d}{4}|\sqrt{t} \geq \frac{\sqrt{t}}{4} \geq \frac{\sqrt{65}}{4} > \frac{8}{4}$, thus $a \leq 11$. Plugging this into (\ref{eq:ineqq}), we see that $\alpha \succ 0$ implies $11 > \sqrt{m}+\sqrt{s}+\sqrt{t}$. However, this never holds -- if $m$ and $s$ are coprime, we get at least $\sqrt{5}+\sqrt{13}+\sqrt{65} \approx 13.904$; if they are not, the smallest value is $\sqrt{21}+\sqrt{33}+\sqrt{77}\approx 19.102$. So there is no decomposition of this type.

For the second subcase, suppose that no quarter-integers are involved, i.e., all $a,b,c,d$ are even; denote $a=2a'$. Similarly to the previous paragraph, from $\beta\succ 0$ we obtain $5-\frac{a'}{2} > \frac{\sqrt{t}}{2}$, so except for the field $\BQ{2}{3}$ this yields $5-\frac{a'}{2} > \frac{\sqrt{10}}{2}>\frac{3}{2}$, i.e.,  $\frac{a'}{2}<\frac72$. In the case of $\BQ{2}{3}$, $\frac{a'}{2}$ is necessarily in $\Z$ and $t=6$, so $\frac{a'}{2}\leq 3$. Thus $\frac{a'}{2} \leq \frac{6}{2}$ in all cases; by plugging this into (\ref{eq:ineqq}), we obtain $6 > \sqrt{m}+\sqrt{s}+\sqrt{t}$. This is satisfied only by $m=2$, $s=3$, $t=6$; however, even this case fails since $\frac12(2-\sqrt2-\sqrt3-\sqrt6) \notin \OK$ and $\frac12(2-\sqrt2-2\sqrt3-\sqrt6)$ is not totally positive. So there are no decompositions of this kind either.
\end{proof}

\begin{remark}\label{Rem:TrivDecompSquares} 
We are particularly interested in decompositions of $1$, $2$, $3$ and $5$ as $\omega^2+\beta$ with $\beta\succcurlyeq0$. By scrutinizing the decompositions in Lemma~\ref{Lemma:TrivDecomp}, we obtain -- aside of the obvious possibilities with $\omega^2\in\{0,1,4\}$, and the slightly less obvious $\omega^2\in\{2,3,5\}$ if the appropriate square root lies in $K$ -- only the following decompositions (up to embedding): 
\begin{enumerate}
    \item $3$ can be written as $\bigl(\frac{\sqrt5+1}{2}\bigr)^2+\bigl(\frac{\sqrt5-1}{2}\bigr)^2$;
    \item $5$ can be written as $\bigl(\frac{\sqrt2+\sqrt6}{2}\bigr)^2+\bigl(3-\sqrt3\bigr)$ 
    or $\bigl(\frac{\sqrt3+\sqrt{7}}{2}\bigr)^2 + \bigl(\frac{\sqrt7-\sqrt{3}}{2}\bigr)^2$. 
\end{enumerate}
\end{remark}

Although the additive decompositions of the form $\omega^2+\beta$ can be fully described also for the numbers $8$ and $10$, we exclude biquadratic fields with small values of $m$ in order to shorten both the statement and the proof.

\begin{lemma} \label{Lemma:10and8}
Let $\sqrt2, \sqrt3, \sqrt5, \sqrt6,\sqrt7 \not\in K$. Then:
 \begin{enumerate}
    \item If $8 = \omega^2 + \beta$, $\beta\succcurlyeq 0$, then  either $\omega^2 \in\{ 0,1,4,9\}$ (these possibilities always exist), or: 
    \begin{itemize}
         \item $\omega^2 = \big(\frac{1\pm \sqrt{13}}{2}\big)^2 = \frac{7\pm\sqrt{13}}{2}$ if $\sqrt{13}\in K$,
         \item $\omega^2 = \big(\frac{1\pm \sqrt{17}}{2}\big)^2 =  \frac{9\pm\sqrt{17}}{2}$ if $\sqrt{17}\in K$,
         \item $\omega^2 = \big(\frac{1\pm \sqrt{21}}{2}\big)^2 = \frac{11\pm\sqrt{21}}{2}$ if $\sqrt{21}\in K$.
     \end{itemize}
     \item If $10 = \omega^2 + \beta$, $\beta\succcurlyeq 0$, then $\omega^2$ can take the same values as listed before, with the additional possibility $\omega^2=9$, and also $\omega^2=10$ if $\sqrt{10}\in K$.
\end{enumerate}
\end{lemma}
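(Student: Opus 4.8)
The plan is to turn the problem into a bounded search for the element $\omega$ itself. Since $\beta\succcurlyeq0$ means $\sigma(\beta)\ge0$ for every embedding $\sigma$, a decomposition $T=\omega^2+\beta$ with $\beta\succcurlyeq0$ (where $T\in\{8,10\}$) exists exactly when $\omega^2\preccurlyeq T$, i.e.\ when $|\sigma(\omega)|\le\sqrt T$ for all $\sigma$; moreover $\beta=T-\omega^2$ is then automatically totally nonnegative. So it suffices to list all $\omega\in\OK$ whose conjugates are bounded in absolute value by $\sqrt T$ and read off the values of $\omega^2$. Writing $\omega=x+y\sqrt p+z\sqrt q+w\sqrt r$ and squaring, the cross terms cancel in the sum of the four conjugates, which yields the master inequality
\[
\tfrac14\Tr{K/\Q}{\omega^2}=x^2+y^2p+z^2q+w^2r\le T.
\]
I would use this together with two standing facts: $p,q,r\ge10$ (because $\sqrt2,\dots,\sqrt7\notin K$), and the observation, read off from the bases (B1)--(B4), that a quarter-integer coefficient forces all of $y,z,w$ to be nonzero.

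The argument then splits according to how many of $y,z,w$ vanish. If all vanish, $\omega\in\Z$ and $x^2\le T$ gives $\omega^2\in\{0,1,4\}$ for $T=8$, together with the extra value $9$ for $T=10$. If exactly one is nonzero, $\omega$ lies in a quadratic subfield $\Q(\sqrt n)$, and the master inequality keeps $n$ small: for $n\equiv2,3\pmod4$ it gives $n\le T\le10$, excluded except for $n=10$ (which contributes only $\omega^2=10$, when $T=10$), while for $n\equiv1\pmod4$ only the half-integral elements $\tfrac{\pm1\pm\sqrt n}2$ can pass, and checking total positivity of $T-\omega^2$ restricts $n$ to $\{13,17,21\}$ (for instance $n=29$ is killed because $\bigl(\tfrac{1+\sqrt{29}}2\bigr)^2=\tfrac{15+\sqrt{29}}2>10$). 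This produces exactly the listed values $\tfrac{7\pm\sqrt{13}}2$, $\tfrac{9\pm\sqrt{17}}2$, $\tfrac{11\pm\sqrt{21}}2$.

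The last two cases must yield nothing. If exactly two of $y,z,w$ are nonzero, say $w=0$, then all four sign patterns of the remaining coefficients occur among the conjugates, so the aligned conjugate gives $|y|\sqrt{n_1}+|z|\sqrt{n_2}\le\sqrt T$; as the nonzero coefficients are at least $\tfrac12$ (quarter-integers being excluded here), this forces $\sqrt{n_1}+\sqrt{n_2}\le2\sqrt T\le2\sqrt{10}$, which is impossible since $n_1\ne n_2$ and both are $\ge10$, whence $\sqrt{n_1}+\sqrt{n_2}\ge\sqrt{10}+\sqrt{11}$. If all three are nonzero but only half-integral (or integral), the master inequality already gives $p+q+r\le4T\le40$, which no such field satisfies.

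The genuinely delicate case -- and the step I expect to be the main obstacle -- is the quarter-integer case, which occurs only in type (B4), where $p\equiv q\equiv r\equiv1\pmod4$ and the coefficients are bounded below only by $\tfrac14$, so that the master inequality weakens to $p+q+r\le16T$. Writing $p=s_0t_0$, $q=m_0t_0$, $r=m_0s_0$ with $m_0,s_0,t_0$ pairwise coprime, square-free, of equal residue modulo $4$, the exclusion of $\sqrt2,\dots,\sqrt7$ forces every pairwise product to be at least $13$; enumerating the smallest such triples then shows that every field in question has $p+q+r\ge131$, with equality only for $m_0,s_0,t_0=3,7,11$, i.e.\ $\{p,q,r\}=\{21,33,77\}$. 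For $T=8$ this contradicts $p+q+r\le128$ outright. For $T=10$ the bound $p+q+r\le160$ leaves only the single field $\{21,33,77\}$, which I would eliminate by a short finite check: the master inequality forces $|a|\le5$ and $|b|=|c|=|d|=1$, and then one of the four conjugates equals $\tfrac{a\pm(\sqrt{21}+\sqrt{33}+\sqrt{77})}4$, whose absolute value is at least $\tfrac{\sqrt{21}+\sqrt{33}+\sqrt{77}-5}4>\sqrt{10}$, so $\omega^2\not\preccurlyeq10$. Establishing the lower bound $p+q+r\ge131$ cleanly, and making the type-(B4) bookkeeping watertight, is the part I expect to require the most care.
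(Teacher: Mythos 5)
Your proposal is correct and follows essentially the same route as the paper's proof: reduce to $\omega^2\preccurlyeq T$, bound coefficients via the trace inequality, split according to how many of the coefficients of $\sqrt p,\sqrt q,\sqrt r$ vanish, and in the quarter-integer case reduce everything to the single field $\BQ{21}{33}$. Your conjugate-alignment argument in the two-coefficient case and your explicit elimination of $\BQ{21}{33}$ (a computation the paper omits) are only minor variants or completions of the corresponding steps in the paper.
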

\begin{proof}
The first part is a trivial consequence of the second one. Let us therefore consider $10 \succcurlyeq \omega^2$ for 
\[\omega= \frac{a}{4}+\frac{b}{4}\sqrt{m}+\frac{c}{4}\sqrt{s}+\frac{d}{4}\sqrt{t}\] 
with $a,b,c,d\in \Z$. By comparing traces, we get the inequality
\begin{equation} \label{eq:ineq}
    10 \geq \frac{a^2}{16} + \frac{b^2}{16}m + \frac{c^2}{16}s + \frac{d^2}{16}t.
\end{equation}
If $m>10$, this immediately implies $\frac{|b|}{4}, \frac{|c|}{4}, \frac{|d|}{4} < 1$. For $m=10$, the same argument holds, except for $\frac{b}{4}=\pm1$, which yields the decomposition $10 = (\pm\sqrt{10})^2+0$. 
Further, we consider several cases:

First, $b,c,d$ are all zeros. Then $\frac{a}{4}$ has to be an integer, and clearly the full list of its possible values is $\frac{a}{4} = 0, \pm 1, \pm 2, \pm 3$.

Second, exactly one of $b,c,d$ is nonzero. Then $\omega$ as well as $\beta$ belong to a quadratic subfield $\Q(\sqrt{n})$ of $K$ where $n\in\{m,s,t\}$. If $n\not\equiv 1 \pmod{4}$, then the coefficient in front of $\sqrt{n}$ is an integer, which cannot happen since it is smaller then $1$ (putting aside the already discussed case of $10 = (\pm\sqrt{10})^2+0$). So $n\equiv 1 \pmod{4}$, $n\geq 13$, and we have to consider only $10 \succcurlyeq \bigl(\frac{a'\pm\sqrt{n}}{2}\bigr)^2$ for $a'$ odd, without loss of generality positive. This is equivalent to $40 \geq a'^2 + 2a'\sqrt{n} + n$. By plugging in $a'=3$ and $n=13$ we find out that $a'\geq 3$ is never possible. Thus $a'=1$; the resulting inequality $39 \geq n + 2\sqrt{n}$ is satisfied for $13$, $17$ and $21$ but not for $29$, which gives us the remaining three decompositions from the statement.

The last possibility is when at least two of $b,c,d$ are nonzero; we prove that this is not possible. Suppose first that there are no quarter-integers involved, i.e., all of $a$, $b$, $c$, $d$ are even. Then if all of $b,c,d$ were nonzero, (\ref{eq:ineq}) would yield $40 \geq m+s+t$ which can easily be checked never to hold for $m\geq 10$.

Thus if no quarter-integers are involved, exactly two of $\frac{b}{4}$, $\frac{c}{4}$, $\frac{d}{4}$ are nonzero (and thus equal to $\pm \frac12$). So we have to find out when $10 \succcurlyeq \bigl(\frac{a'}{2} + \frac{\pm\sqrt{n_1}\pm\sqrt{n_2}}{2}\bigr)^2$ holds for some $n_1,n_2\in\{m,s,t\},n_1\neq n_2$. By Lemma~\ref{Lemma:TotPositiveInequalities} this gives $10-\frac{a'^2}{4}-\frac{n_1+n_2}{4} \geq \left|\pm 2\frac{\sqrt{n_1}\sqrt{n_2}}{4}\right|$, which implies $40 - n_1 - n_2 - 2\sqrt{n_1 n_2} \geq 0$. That never holds for $n_1 \neq n_2 \geq 10$, giving the desired contradiction.

The only remaining case is when quarter-integers are involved in the decomposition. This can only happen if $m\equiv s\equiv t\equiv 1 \pmod{4}$, and all $a,b,c,d$ are odd -- especially, they cannot be zero. So the basic condition given by (\ref{eq:ineq}) translates to $10 \geq \frac{1+m+s+t}{16}$, i.e., $159 \geq m+s+t$. This almost never holds: Clearly $m,s$ cannot be coprime. Thus $m$ is a composed square-free number satisfying $159 \geq m + (m+4) + (m+8)$; this gives $m=21$ and $m=33$ as the only options. Then the inequality $159 - 21 \geq s + (s+4)$ only allows the composed square-free numbers $s=33$, $s=57$ and $s=65$. After computing the corresponding $t$'s we get $\BQ{21}{33}$ as the only case where indeed $159 \geq m+s+t$.

So we only have to prove that $10 \succcurlyeq \bigl(\frac{a+ b\sqrt{21}+ c\sqrt{33}+ d\sqrt{77}}{4}\bigr)^2$ cannot happen for $a$ odd and $\frac{|b|}{4}, \frac{|c|}{4}, \frac{|d|}{4} \in \bigl\{\frac14, \frac34\bigr\}$. This requires just straightforward calculations which we omit.
\end{proof}
\begin{remark*}
As we will see in Subsection~\ref{Subsec:SqrtsWithUnits}, the previous lemma is not required when $\sqrt{21}\in K$. In particular, there is no harm in avoiding the computation for the field  $\BQ{21}{33}$.
\end{remark*}

%----------------------------------
\subsection{New indecomposable elements}\label{Subsec:NewIndeElms}
Theorem~\ref{Theorem:Indecomposables} gives us several conditions under which indecomposable integers from quadratic subfields do not decompose in the biquadratic field. The following examples show that if these conditions are not satisfied, indecomposable elements can become decomposable in the larger field.

\begin{example} 
Let us consider $K=\BQ{2}{5}$ and $\alpha=7+2\sqrt{10}$, which is indecomposable in $\Q(\sqrt{10})$. Nevertheless, $\alpha$ can be written as
\[
\alpha=\underbrace{\frac{7}{2}+\sqrt{2}+\frac{1}{2}\sqrt{5}+\sqrt{10}}_{\beta}+\underbrace{\frac{7}{2}-\sqrt{2}-\frac{1}{2}\sqrt{5}+\sqrt{10}}_{\gamma}
\]
where $\beta$ and $\gamma$ are both totally positive integers of $\BQ{2}{5}$. In a similar way we can see that $\alpha'=10+3\sqrt{10}$, indecomposable in $\Q(\sqrt{10})$, can be expressed as
\[
\alpha'=\underbrace{5+\frac{3}{2}\sqrt{2}+\sqrt{5}+\frac{3}{2}\sqrt{10}}_{\beta'}+\underbrace{5-\frac{3}{2}\sqrt{2}-\sqrt{5}+\frac{3}{2}\sqrt{10}}_{\gamma'}.
\]
for $\beta',\gamma'\in\OKPlus$. Thus $\alpha$ and $\alpha'$ decompose in this biquadratic field.  
\end{example} 

In the following example, we will show the case when a certain element, later called $\T$, decomposes in a biquadratic field. Note that this provides the proof of Theorem~\ref{Thm:Inde}\eqref{Thm:T}.

\begin{example} \label{Exm:Tdecomp}
Let $K=\BQ{2}{21}$ and consider $\alpha=7+\sqrt{42}$, which is indecomposable in $\Q(\sqrt{42})$. Note that this element is of the form $\alpha=\csqrt{42}+\sqrt{42}$, and $42\equiv 2\pmod{4}$.  
However, as before, the element $\alpha$ can be written as 
\[
\alpha=\underbrace{\frac{7}{2}+\frac{3}{2}\sqrt{2}+\frac{1}{2}\sqrt{21}+\frac{1}{2}\sqrt{42}}_{\beta}+\underbrace{\frac{7}{2}-\frac{3}{2}\sqrt{2}-\frac{1}{2}\sqrt{21}+\frac{1}{2}\sqrt{42}}_{\gamma}
\]
where $\beta,\gamma\in\OKPlus$.   
\end{example}

On the other hand, Theorem~\ref{Theorem:Indecomposables} does not cover all indecomposable integers of quadratic subfields which do not decompose in our biquadratic field. In the following part, we will show some examples of such elements, which we later use in proving nonuniversality of ternary quadratic forms. 

Let $u\in\N$ be a square-free positive integer greater than $1$. In what follows, $\csqrt{u}$ stands for the smallest element of $\N$ greater than $\sqrt{u}$. Furthermore, we will denote by $\csqrto{u}$ the smallest odd positive integer greater than $\sqrt{u}$. Denote
\[
\M_1=\cm+\sqrt{m}.
\]
Moreover, for $m\equiv 1\pmod{4}$ put
\[
\M_{1/2}=\frac{\cmo+\sqrt{m}}{2}.
\]
Note that for $m\equiv 1\pmod{4}$ we have both $\M_1$ and $\M_{1/2}$; in other cases, there is only $\M_1$.
The elements $\S_1$, $\S_{1/2}$, $\T_1$ and $\T_{1/2}$ are defined in a similar way. Most importantly, we denote  
\[
    \M=\left\{
                \begin{array}{ll}
                  \M_1=\cm+\sqrt{m} \qquad\text{if } m\equiv 2,3\!\!\!\pmod{4},\\
                  \M_{1/2}=\frac{\cmo+\sqrt{m}}{2}\quad\text{if } m\equiv 1\!\!\!\pmod{4}.
                \end{array}
              \right.
\]
Likewise, we define $\S$ and $\T$. 

Now we will show that $\M$ is indecomposable in $\Q(\sqrt{m})$. All the indecomposable integers of this subfield can be obtained from the continued fraction $[u_0,\overline{u_1,u_2,\dots, u_{l-1}, u_l}]$ of $\sqrt{m}$ or $\frac{\sqrt{m}-1}{2}$ as convergents and semiconvergents. In the case of $m\equiv 2,3\pmod{4}$, the first two convergents of $\sqrt{m}$ are $\alpha_{-1}=1$ and $\alpha_{0}=\lfloor \sqrt{m}\rfloor+\sqrt{m}$. Semiconvergents $\alpha_{-1,k}$ are consequently equal to
\[
\alpha_{-1,k}=\alpha_{-1}+k\alpha_{0}
\]
where $0\leq k\leq u_{1}$ and $1\leq u_{1}$. Note that these elements are totally positive since the index $-1$ is odd. If $k=1$, we have
\[
\alpha_{-1,1}=\alpha_{-1}+\alpha_{0}=\lfloor \sqrt{m}\rfloor+1+\sqrt{m}=\cm+\sqrt{m}=\M_1=\M,
\]
thus $\M$, as one of the totally positive semiconvergents of $\sqrt{m}$, is indecomposable in $\Q(\sqrt{m})$.

In the case when $m\equiv 1\pmod{4}$, we consider the first two convergents
\[
\alpha_{-1}=1 \text{\quad and \quad} \alpha_{0}=\left\lfloor\frac{\sqrt{m}-1}{2}\right\rfloor+\frac{1+\sqrt{m}}{2}.
\]

Let us first suppose that $\lfloor\sqrt{m}\rfloor$ is even, i.e., $\cm$ is odd. Put $\sqrt{m}=2l+\lambda$ for some $l\in\Z$ and $0<\lambda<1$. Then
\[
\left\lfloor\frac{\sqrt{m}-1}{2}\right\rfloor=\left\lfloor\frac{2l+\lambda-1}{2}\right\rfloor=\left\lfloor l+\frac{\lambda-1}{2}\right\rfloor=l-1=\frac{\lfloor\sqrt{m}\rfloor}{2}-1.
\]
Therefore,
\begin{align*}
\alpha_{-1,1}=&\;\alpha_{-1}+\alpha_{0}=1+\frac{\lfloor\sqrt{m}\rfloor}{2}-1+\frac{1+\sqrt{m}}{2}=\frac{\lfloor\sqrt{m}\rfloor+1+\sqrt{m}}{2}\\
=&\;\frac{\cmo+\sqrt{m}}{2}=\M_{1/2}=\M.
\end{align*}
If $\lfloor\sqrt{m}\rfloor$ is odd and $\cm$ is even, i.e, $\sqrt{m}=2l+1+\lambda$ for some $l\in\Z$ and $0<\lambda<1$, we can say that
\[
\left\lfloor\frac{\sqrt{m}-1}{2}\right\rfloor=\left\lfloor\frac{2l+1+\lambda-1}{2}\right\rfloor=\left\lfloor l+\frac{\lambda}{2}\right\rfloor=l=\frac{\lfloor\sqrt{m}\rfloor-1}{2}
\]
As previously, we can conclude that
\[
\alpha_{-1,1}=1+\frac{\lfloor\sqrt{m}\rfloor-1}{2}+\frac{1+\sqrt{m}}{2}=\frac{\lfloor\sqrt{m}\rfloor+2+\sqrt{m}}{2}=\frac{\cmo+\sqrt{m}}{2}=\M_{1/2}=\M.
\]
In both these cases, $\M$ is a semiconvergent of $\frac{\sqrt{m}-1}{2}$, thus it is indecomposable in $\Q(\sqrt{m})$. Moreover, the previous part does not depend on the fact whether we choose $m,s$ or $t$. Thus we also know that $\S$ and $\T$ are indecomposable in the corresponding quadratic subfields.

Note that, in the same way, we can prove the indecomposability of $2\M-1$ in $\Q(\sqrt{m})$ if this element is totally positive. It is a simple matter to see that $u_{1}\geq 2$ in this case and $2\M-1$ is the semiconvergent $\alpha_{-1,2}$. 

Our next concern is to discuss the indecomposability of $\M$ in a general biquadratic field. Having  indecomposability in $\Q(\sqrt{m})$, Theorem~\ref{Theorem:Indecomposables} gives us a partial result of this task for some special choices of $m$. First of all, let us focus on the integral basis (B1). If $m\equiv 3\pmod{4}$ and consequently $\M=\M_1$, Theorem~\ref{Theorem:Indecomposables} says that this element is indecomposable in the considered type of biquadratic fields. The same conclusion can be drawn for the integral bases (B2) and (B3) if $m\equiv 1\pmod{4}$, in which case $\M=\M_{1/2}$. 

Therefore, it is natural to ask whether this element is indecomposable in the remaining cases of integral bases and $m\;\text{mod } 4$. Since $\M$ is a convergent of $\sqrt{m}$ or $\frac{\sqrt{m}-1}{2}$ only if $u_1=1$, Theorem~\ref{Theorem:Indecomposables} resolves this problem only in some special cases. However, as we will see in Proposition~\ref{Prop:IndecM}, the element $\M$ actually cannot decompose in any biquadratic field $\Q(\sqrt{m},\sqrt{s})$. For its proof, we first need to know how close the values of $m$, $s$ and $t$ can be. While there are infinitely many fields where $s-m=1$, the other differences cannot be arbitrarily small.

\begin{lemma} \label{Lemma:mstInequalities}
The following holds in every biquadratic field (given by the three square roots $\sqrt{m}$, $\sqrt{s}$, $\sqrt{t}$):
 \begin{enumerate}
  \item $\sqrt{t}-\sqrt{s} > \frac12$; if the integral basis is of type (B4), then $\sqrt{t}-\sqrt{s}> 2$, \label{Lemma:mstInequalities-1}
  \item if $\sqrt{t}-\sqrt{s} \leq 1$, then $m$ is even and $2\sqrt{m} > \sqrt{s}$, \label{Lemma:mstInequalities-2}
  \item $\sqrt{t}-\sqrt{m} > 1$; if the integral basis is of type (B4), then $\sqrt{t}-\sqrt{m} > 4$. \label{Lemma:mstInequalities-3}
 \end{enumerate}
\end{lemma}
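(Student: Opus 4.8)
The plan is to reduce all three statements to elementary inequalities among the coprime square-free factors $m_0>s_0>t_0$, using the factorization $m=s_0t_0$, $s=m_0t_0$, $t=m_0s_0$ recorded just before the lemma (with $m_0\equiv s_0\equiv t_0\pmod4$, hence all gaps at least $4$, in type (B4)). The observation that makes everything work is that the relevant differences of square roots factor cleanly:
\[
\sqrt{t}-\sqrt{s}=\sqrt{m_0}\bigl(\sqrt{s_0}-\sqrt{t_0}\bigr),
\qquad
\sqrt{t}-\sqrt{m}=\sqrt{s_0}\bigl(\sqrt{m_0}-\sqrt{t_0}\bigr).
\]
Combining this with the identity $\sqrt{x}-\sqrt{y}=\frac{x-y}{\sqrt{x}+\sqrt{y}}$, the concavity bound $\sqrt{a+\delta}+\sqrt{a-\delta}<2\sqrt{a}$, and the strict chain $t_0<s_0<m_0$, each part becomes a one-line estimate.

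For part \eqref{Lemma:mstInequalities-1} I would write $\sqrt{t}-\sqrt{s}=\sqrt{m_0}\,\frac{s_0-t_0}{\sqrt{s_0}+\sqrt{t_0}}$. Since $s_0-t_0\ge1$ and $\sqrt{s_0}+\sqrt{t_0}<2\sqrt{s_0}$, this exceeds $\frac12\sqrt{m_0/s_0}>\frac12$ because $m_0>s_0$. In type (B4) the congruence forces $s_0-t_0\ge4$, and the identical estimate upgrades to $\sqrt{t}-\sqrt{s}>2\sqrt{m_0/s_0}>2$.

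For part \eqref{Lemma:mstInequalities-2}, assume $\sqrt{t}-\sqrt{s}\le1$. The divisibility claim I would prove by contradiction: if $m=s_0t_0$ were odd, then both $s_0$ and $t_0$ are odd, so $s_0-t_0$ is even and hence $\ge2$; feeding this into the factorization gives $\sqrt{t}-\sqrt{s}>\sqrt{m_0/s_0}>1$, contradicting the hypothesis, so $m$ must be even. For the inequality $2\sqrt{m}>\sqrt{s}$, note $\sqrt{s_0}-\sqrt{t_0}>\frac{1}{2\sqrt{s_0}}$ (from $s_0-t_0\ge1$) while the hypothesis yields $\sqrt{s_0}-\sqrt{t_0}\le\frac{1}{\sqrt{m_0}}$; comparing the two gives $m_0<4s_0$, and multiplying by $t_0$ gives $s<4m$, i.e.\ $2\sqrt{m}>\sqrt{s}$. (No case split is needed here: by part \eqref{Lemma:mstInequalities-1} the hypothesis $\sqrt{t}-\sqrt{s}\le1$ can never occur in type (B4) anyway.)

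Finally, for part \eqref{Lemma:mstInequalities-3} I use $\sqrt{t}-\sqrt{m}=\sqrt{s_0}\bigl(\sqrt{m_0}-\sqrt{t_0}\bigr)$ together with $m_0\ge s_0+1$ and $t_0\le s_0-1$, so that $\sqrt{m_0}-\sqrt{t_0}\ge\sqrt{s_0+1}-\sqrt{s_0-1}=\frac{2}{\sqrt{s_0+1}+\sqrt{s_0-1}}$; concavity bounds the denominator below $2\sqrt{s_0}$, giving $\sqrt{t}-\sqrt{m}>1$, and in type (B4) the gaps $\ge4$ turn the same computation (with $\pm4$ replacing $\pm1$) into $\sqrt{t}-\sqrt{m}>4$. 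The only genuinely delicate step, and the one I would verify with care, is the parity argument in part \eqref{Lemma:mstInequalities-2}: one must be certain that oddness of $m$ forces \emph{both} $s_0$ and $t_0$ odd — which is exactly where their pairwise coprimality together with $m=s_0t_0$ is used — so that the gap $s_0-t_0$ jumps from $\ge1$ to $\ge2$ and pushes $\sqrt{t}-\sqrt{s}$ strictly above $1$.
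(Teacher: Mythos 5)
Your proof is correct and follows essentially the same route as the paper's: the identical factorizations $\sqrt{t}-\sqrt{s}=\sqrt{m_0}(\sqrt{s_0}-\sqrt{t_0})$ and $\sqrt{t}-\sqrt{m}=\sqrt{s_0}(\sqrt{m_0}-\sqrt{t_0})$, the bound $\sqrt{s_0}+\sqrt{t_0}<2\sqrt{s_0}<2\sqrt{m_0}$, and the concavity estimate for part (3). The only cosmetic difference is in part (2), where you argue the contrapositive ($m$ odd forces $s_0-t_0\geq 2$) while the paper deduces $s_0=t_0+1$ directly; these are equivalent.
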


\begin{proof}
All the three parts use the notation $m=s_0t_0$, $s=m_0t_0$, $t=m_0s_0$; recall that $m<s<t$ is then equivalent to $m_0>s_0>t_0$, and that in basis (B4) we have $m_0 \equiv s_0 \equiv t_0 \pmod{4}$.

\smallskip

(1) We can compute
\begin{equation}\label{Eq:t-s-ineq}
\sqrt{t}-\sqrt{s} = \sqrt{m_0}\bigl(\sqrt{s_0}-\sqrt{t_0}\bigr) = \frac{\sqrt{m_0}}{\sqrt{s_0}+\sqrt{t_0}}(s_0-t_0) >  \frac12 (s_0-t_0).
\end{equation}
The proof is concluded by observing that $s_0-t_0 \geq 1$ and in the case (B4) $s_0-t_0 \geq 4$.

\smallskip

(2) The inequality \eqref{Eq:t-s-ineq} implies that if $s_0-t_0 \neq 1$, then $\sqrt{t}-\sqrt{s} > 1$, which contradicts our assumption. Therefore, $s_0 = t_0+1$, and it follows that $m = (t_0+1)t_0$ is even. To obtain the required inequality, we write
\[
1 \geq \sqrt{t}-\sqrt{s} = \frac{\sqrt{m_0}}{\sqrt{s_0}+\sqrt{t_0}} \cdot 1 > \frac{\sqrt{m_0}}{2\sqrt{s_0}};
\]
multiplying both sides by $2\sqrt{s_0t_0}$ yields the desired inequality $2\sqrt{s_0t_0} > \sqrt{m_0t_0}$.

\smallskip

(3) We start by proving $\sqrt{s_0}(\sqrt{s_0+1}-\sqrt{s_0-1}) > 1$: Since $x \mapsto \sqrt{x}$ is a strictly concave function, it follows that $\sqrt{s_0} > \frac{\sqrt{s_0+1}+\sqrt{s_0-1}}{2}$, which after multiplication by $(\sqrt{s_0+1}-\sqrt{s_0-1})$ yields
\[
\sqrt{s_0}\bigl(\sqrt{s_0+1}-\sqrt{s_0-1}\bigr) >  \frac{(s_0+1)-(s_0-1)}{2} = 1,
\]
i.e., the desired inequality. Using this, we get
\[
\sqrt{t}-\sqrt{m} = \sqrt{s_0}\bigl(\sqrt{m_0}-\sqrt{t_0}\bigr) \geq \sqrt{s_0}\bigl(\sqrt{s_0+1}-\sqrt{s_0-1}\bigr) > 1.
\]
To prove the stronger inequality for (B4), the auxiliary inequality is $\sqrt{s_0}(\sqrt{s_0+4}-\sqrt{s_0-4}) > 4$, otherwise the proof does not change.
\end{proof}

On a side note, all the proven inequalities are the strongest possible, as can be seen by considering a sequence of fields with $m_0=t_0+2$ and $s_0=t_0+1$ (since $m_0, s_0, t_0$ are square-free, and hence not divisible by $4$, $t_0 \equiv 1 \pmod{4}$ follows)
or, in the case of (B4), $m_0=t_0+8$ and $s_0=t_0+4$. 

Now we proceed to the proof of the indecomposability of $\M$ in biquadratic fields. Note that the following proposition, aside from playing a key role in the proof of Theorem~\ref{Thm:Main}, also forms a part of Theorem~\ref{Thm:Inde}.

\begin{proposition}\label{Prop:IndecM} 
The element $\M$ is indecomposable in $\Q(\sqrt{m},\sqrt{s})$.
\end{proposition}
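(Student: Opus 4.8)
The plan is to argue by contradiction. Suppose $\M=\beta+\gamma$ with $\beta,\gamma\in\OKPlus$, and write both summands in the $\Q$-basis $(1,\sqrt m,\sqrt s,\sqrt t)$, say $\beta=x+y\sqrt m+z\sqrt s+w\sqrt t$. Since $\M\in\Q(\sqrt m)$ has zero $\sqrt s$- and $\sqrt t$-components, the corresponding coordinates of $\gamma$ are forced to be $-z$ and $-w$, while its rational and $\sqrt m$-parts are $x_0-x$ and $y_0-y$, where $(x_0,y_0)$ denotes the coordinate pair of $\M$ (so $(x_0,y_0)=(\cm,1)$ if $m\equiv2,3\pmod4$ and $(x_0,y_0)=(\tfrac{\cmo}2,\tfrac12)$ if $m\equiv1\pmod4$). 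The goal is to show $z=w=0$: then $\beta,\gamma\in\Q(\sqrt m)$ and we contradict the indecomposability of $\M$ in $\Q(\sqrt m)$ established above.

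First I would kill the $\sqrt t$-coordinate. Applying Lemma~\ref{Lemma:TotPositiveInequalities} to $\beta$ and to $\gamma$ gives $x>|w|\sqrt t$ and $x_0-x>|w|\sqrt t$; adding these yields $x_0>2|w|\sqrt t$. Since $x_0<\sqrt m+1$ (resp. $x_0<\tfrac12\sqrt m+1$) while Lemma~\ref{Lemma:mstInequalities}\eqref{Lemma:mstInequalities-3} gives $\sqrt t>\sqrt m+1$ (resp. $\sqrt t>\sqrt m+4$ in case (B4)), a short computation forces $|w|<\tfrac12$ outside (B4) and $|w|<\tfrac14$ in (B4); comparing with the admissible denominators in Remark~\ref{Rem:BasisElements} gives $w=0$. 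Note that in case (B4) this already excludes the quarter-integer shape of $\beta$ and $\gamma$ entirely, since that shape requires all four coordinates to have odd numerators over $4$, in particular $w\neq0$.

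The crux is the $\sqrt s$-coordinate, and here the naive estimate does not suffice: the coordinatewise inequalities of Lemma~\ref{Lemma:TotPositiveInequalities} only give $x_0>2|z|\sqrt s$, which for a half-integer $z$ merely says $\sqrt s<x_0<\sqrt m+1$, and $\sqrt s$ genuinely can lie within $1$ of $\sqrt m$. The observation that resolves this is that \emph{once $w=0$}, the two remaining square roots $\sqrt m,\sqrt s$ realize all four sign patterns among the embeddings $\sigma_1,\dots,\sigma_4$ (any two of $\sqrt m,\sqrt s,\sqrt t$ do, as the Galois group is $(\Z/2)^2$). Hence total positivity of $\beta$ is equivalent to the \emph{single} strengthened inequality $x>|y|\sqrt m+|z|\sqrt s$, and likewise $x_0-x>|y_0-y|\sqrt m+|z|\sqrt s$ for $\gamma$. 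Adding and using $|y|+|y_0-y|\ge|y_0|$ gives the sharper bound
\[
x_0>|y_0|\sqrt m+2|z|\sqrt s.
\]
Because $x_0-|y_0|\sqrt m$ equals $\cm-\sqrt m$ or $\tfrac12(\cmo-\sqrt m)$, both of which are always smaller than $1$, this forces $2|z|\sqrt s<1$, i.e. $|z|<\tfrac1{2\sqrt s}<\tfrac12$. Since $w=0$ has already ruled out quarter-integers, $z$ is an integer or a half-integer, so $|z|<\tfrac12$ yields $z=0$.

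With $z=w=0$ both $\beta$ and $\gamma$ lie in $\Q(\sqrt m)$, contradicting the indecomposability of $\M$ there, which completes the argument. The main obstacle, as indicated, is precisely the half-integer case of the $\sqrt s$-coordinate: one must replace the separate inequalities of Lemma~\ref{Lemma:TotPositiveInequalities} by the sharper sum inequality afforded by the sign-independence of $\sqrt m$ and $\sqrt s$ after $w$ has been eliminated; everything else is routine bounding with the help of Lemma~\ref{Lemma:mstInequalities}.
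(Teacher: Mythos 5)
Your proof is correct and follows essentially the same route as the paper's: first kill the $\sqrt{t}$-coefficient by combining Lemma~\ref{Lemma:TotPositiveInequalities} for both summands with Lemma~\ref{Lemma:mstInequalities}\eqref{Lemma:mstInequalities-3}, then kill the $\sqrt{s}$-coefficient via the sharper inequality coming from a suitably chosen embedding (your "all four sign patterns" formulation is exactly the paper's "one specific embedding" step), and finally reduce to indecomposability in $\Q(\sqrt{m})$. The only difference is cosmetic: you treat $\M_1$ and $\M_{1/2}$ uniformly, whereas the paper writes out the two cases with explicit integer numerators.
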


\begin{proof}
Since $\M$ is indecomposable  in the quadratic subfield $\Q(\sqrt{m})$, we do not have to prove its indecomposability in any of the cases where it is guaranteed by Theorem~\ref{Theorem:Indecomposables}. In particular, it is indecomposable if $m \equiv 1 \pmod{4}$ and the integral basis is not (B4).

Assume first that $\M = \M_{1/2}$. If it is decomposable, then the integral basis is (B4). Suppose that $\M$ decomposes as
\[
\M_{1/2}=\frac{\cmo+\sqrt{m}}{2}=\underbrace{\frac{a}{4}+\frac{b}{4}\sqrt{m}+\frac{c}{4}\sqrt{s}+\frac{d}{4}\sqrt{t}}_{\gamma}+\underbrace{\frac{a'}{4}+\frac{b'}{4}\sqrt{m}-\frac{c}{4}\sqrt{s}-\frac{d}{4}\sqrt{t}}_{\gamma'}
\]
where $\gamma,\gamma'\in\OKPlus$ (in particular, $a,a',b,b',c,d\in\Z$). By Lemma~\ref{Lemma:TotPositiveInequalities} we have $\frac{a}{4} > \left| \frac{d}{4} \right|\sqrt{t}$ and $\frac{a'}{4} > \left| \frac{-d}{4}\right|\sqrt{t}$; putting these two inequalities together, we obtain
\[
\Bigl| \frac{d}{4} \Bigr|\sqrt{t} < \min\Bigl\{\frac{a}{4}, \frac{a'}{4}\Bigr\} \leq \frac12\frac{a+a'}{4} = \frac{\cmo}{4},
\]
yielding $|d|\sqrt{t} < \cmo < \sqrt{m} + 2$. By part \eqref{Lemma:mstInequalities-3} of Lemma~\ref{Lemma:mstInequalities}, $d=0$.

Once we know that $d=0$, we exploit the total positivity of $\gamma$ and $\gamma'$ by choosing one specific embedding to obtain the inequalities
\[
\frac{a}{4}-\frac{b}{4}\sqrt{m}-\frac{\abs{c}}{4}\sqrt{s}>0 \hspace{5mm} \text{and} \hspace{5mm} \frac{a'}{4}-\frac{b'}{4}\sqrt{m}-\frac{\abs{c}}{4}\sqrt{s}>0.
\]
 Putting them together, we get
 \[
\abs{\frac{c}{4}}\sqrt{s} < \min\Bigl\{\frac{a-b\sqrt{m}}{4}, \frac{a'-b'\sqrt{m}}{4} \Bigr\} \leq \frac12 \frac{a+a'-(b+b')\sqrt{m}}{4} = \frac12 \frac{\cmo-\sqrt{m}}{2} < \frac12,
\]
which clearly cannot hold for $c \neq 0$.  Thus, we have $c=0$ and $d=0$, and hence $\gamma,\gamma'$ are elements of $\Q(\sqrt{m}) \cap \OKPlus = \OKPlus[\Q(\sqrt{m})]$; however, we already know that $\M$ is indecomposable in $\Q(\sqrt{m})$. That proves that $\M_{1/2}$ is indecomposable in any biquadratic field.

\bigskip

Now we will handle the case when $\M=\M_1$. The proof is very similar. Start by supposing that
\[
\M_1=\cm+\sqrt{m}=\underbrace{\frac{a}{2}+\frac{b}{2}\sqrt{m}+\frac{c}{2}\sqrt{s}+\frac{d}{2}\sqrt{t}}_{\gamma}+\underbrace{\frac{a'}{2}+\frac{b'}{2}\sqrt{m}-\frac{c}{2}\sqrt{s}-\frac{d}{2}\sqrt{t}}_{\gamma'}
\]
where again $\gamma,\gamma'\in\OKPlus$. We use Lemma~\ref{Lemma:TotPositiveInequalities} in the same manner as above to obtain
\[
\Bigl|\frac{d}{2}\Bigr|\sqrt{t} < \min\Bigl\{\frac{a}{2}, \frac{a'}{2}\Bigr\} \leq \frac12\frac{a+a'}{2} = \frac{\cm}{2},
\]
and hence $\abs{d}\sqrt{t}<\sqrt{m}+1$; similarly as before, invoking part \eqref{Lemma:mstInequalities-3} of Lemma~\ref{Lemma:mstInequalities} yields $d=0$.

Again, we use suitable embeddings of $\gamma$ and $\gamma'$ and put the resulting inequalities together:
\[
\Bigl|\frac{c}{2}\Bigr|\sqrt{s} < \min\Bigl\{\frac{a-b\sqrt{m}}{2}, \frac{a'-b'\sqrt{m}}{2} \Bigr\} \leq \frac12 \frac{a+a'-(b+b')\sqrt{m}}{2} = \frac12 \bigl(\cm-\sqrt{m}\bigr) < \frac12.
\]
This clearly implies $c=0$. But then necessarily $\gamma,\gamma'\in\OKPlus[\Q(\sqrt{m})]$, which contradicts the indecomposability of $\M$ in the quadratic field $\Q(\sqrt{m})$.
\end{proof}

In the following proposition, we prove the indecomposability of $\S$; this result, together with the previous proposition, concludes part \eqref{Thm:MaS} of Theorem~\ref{Thm:Inde}. The main techniques of the proof are very similar as in the case of $\M$; the only problem is that the inequality $\sqrt{t}-\sqrt{s} > 1$ is not necessarily satisfied: Not only are there fields where $\sqrt{t}-\sqrt{s}$ is very close to $0{.}5$, but in some of them $\ct = \cs$ actually happens. An example is the field $\BQ{1806}{2814}$, where $t_0=42$, $s_0=43$ and $m_0=67$. However, in every such field, part \eqref{Lemma:mstInequalities-2} of Lemma~\ref{Lemma:mstInequalities} applies, which we will exploit significantly.

\begin{proposition} \label{Prop:IndecS}
The element $\S$ is indecomposable in $\Q(\sqrt{m},\sqrt{s})$.
\end{proposition}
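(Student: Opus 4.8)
The plan is to follow the same template as the proof of Proposition~\ref{Prop:IndecM}, treating the cases $\S = \S_{1/2}$ and $\S = \S_1$ in parallel. As before, Theorem~\ref{Theorem:Indecomposables} already handles many cases, so I would first identify which cases remain; since $\S\in\Q(\sqrt s)$ and $\sqrt s$ plays the role of the ``middle'' generator, the relevant conditions in part~(b) of that theorem involve $\sqrt t$ (the conjugate square root with respect to $\Q(\sqrt s)$). I assume $\S$ decomposes nontrivially and write, exactly as in the previous proof, $\S = \gamma + \gamma'$ where $\gamma = \frac{a}{2}+\frac{b}{2}\sqrt m + \frac{c}{2}\sqrt s + \frac{d}{2}\sqrt t$ (and the analogous quarter-integer version in basis (B4)), with $\gamma' $ having the conjugate coefficients in the $\sqrt m$ and $\sqrt t$ slots so that the $\sqrt m$- and $\sqrt t$-components cancel, leaving only the $\sqrt s$-term. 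The goal is then to force the two ``foreign'' coefficients to vanish, reducing to indecomposability in $\Q(\sqrt s)$.

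The first step is to kill the coefficient of the \emph{smallest} foreign square root. In the proof of $\M$ one used $\sqrt t - \sqrt m > 1$ to conclude $d=0$; here the analogous clean bound is $\sqrt t - \sqrt s > 1$, which would immediately give the coefficient of $\sqrt t$ equal to zero by the same averaging argument via Lemma~\ref{Lemma:TotPositiveInequalities}. The difficulty, flagged in the remark before the statement, is that $\sqrt t - \sqrt s > 1$ can \emph{fail} (it is only guaranteed to exceed $\frac12$, and in basis (B4) it exceeds $2$, but in the non-(B4) bases it may be as small as just above $\frac12$, even with $\ct = \cs$). So the argument bifurcates: when $\sqrt t - \sqrt s > 1$ I proceed verbatim as for $\M$, obtaining first that the $\sqrt t$-coefficient vanishes and then, by the second averaging inequality (which compares against $\frac12(\cs - \sqrt s) < \frac12$ or its half-integer analogue), that the $\sqrt m$-coefficient vanishes too; this lands $\gamma,\gamma'\in\OKPlus[\Q(\sqrt s)]$ and contradicts indecomposability in $\Q(\sqrt s)$.

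The main obstacle is precisely the residual case $\sqrt t - \sqrt s \le 1$, and this is where part~\eqref{Lemma:mstInequalities-2} of Lemma~\ref{Lemma:mstInequalities} must do the heavy lifting: it tells us $m$ is even (so the basis is not (B4), and in fact $m\equiv2\pmod4$, forcing $\S=\S_1$ or $\S_{1/2}$ according to $s\bmod 4$) and, crucially, $2\sqrt m > \sqrt s$. The plan is to use this to bound the $\sqrt t$-coefficient $d$ not by $1$ but by a small integer, and then exploit total positivity more carefully. Concretely, the averaging argument gives $|d|\sqrt t < \cs \le \sqrt s + 1 < \sqrt t + 1$, which only yields $|d|\le 1$, so I must separately exclude $|d|=1$. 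For that I would keep the $\sqrt t$-term alive and instead bound the $\sqrt m$- and $\sqrt s$-coefficients first, combining the two total-positivity inequalities from $\gamma,\gamma'\succ 0$ with the inequality $2\sqrt m > \sqrt s$ to derive a contradiction; the point is that with $c$ (the surviving $\sqrt s$-coefficient) fixed by the decomposition and $d=\pm1$, total positivity in all four embeddings over-constrains the small integers $a,a',b,b'$. I expect this to reduce to checking that no choice of these bounded integers simultaneously satisfies $a > |b|\sqrt m + |c|\sqrt s + \sqrt t$ (and its conjugate) together with the matching condition for $\gamma'$, which should close the case, though it will require the kind of careful case distinction on $c\bmod 2$ and on $s\bmod 4$ that the authors warn is treacherous.
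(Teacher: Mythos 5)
Your overall strategy is the paper's: dispose of the cases already covered by Theorem~\ref{Theorem:Indecomposables}, run the two averaging arguments from Proposition~\ref{Prop:IndecM} to kill first the $\sqrt t$-coefficient and then the $\sqrt m$-coefficient, and isolate the residual regime $\sqrt t-\sqrt s\le 1$, where Lemma~\ref{Lemma:mstInequalities}\eqref{Lemma:mstInequalities-2} supplies \uv{$m$ even and $2\sqrt m>\sqrt s$}. Everything up to and including the bound $|d|\le 1$ matches the paper (including the use of the stronger (B4) inequality for $\S_{1/2}$). The gap is in how you propose to exclude $|d|=1$, which is exactly where the paper's proof does its real work.

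Two concrete problems. First, the criterion you plan to falsify, $a>|b|\sqrt m+|c|\sqrt s+\sqrt t$, corresponds to the sign pattern $(-,-,-)$ on $(\sqrt m,\sqrt s,\sqrt t)$, which is \emph{not} an embedding of a biquadratic field: the three conjugation signs always multiply to $+1$, so every nontrivial embedding negates exactly two of the three roots. Hence failure of that inequality does not disprove total positivity; the surviving candidate $\gamma=\frac12\bigl(\cs+\sqrt m+\sqrt s+\sqrt t\bigr)$ fails your inequality trivially yet must be killed by the $(+,-,-)$ embedding, i.e.\ by $\cs-\sqrt s<1<\sqrt t-\sqrt m$ (Lemma~\ref{Lemma:mstInequalities}\eqref{Lemma:mstInequalities-3}). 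Second, before any such check one must pin the decomposition down, and your sketch is missing the step that does this: the paper first shows $a=a'$ (if $a\ne a'$, the averaging bound sharpens to $\sqrt t<\cs-1<\sqrt s$, absurd), which forces $c=c'=1$ and leaves only $\gamma=\frac{\cs}2+\frac b2\sqrt m+\frac12\sqrt s+\frac12\sqrt t$ with $b\in\{1,2\}$ once $2\sqrt m>\sqrt s$ rules out $b\ge3$. The case $b=2$ is then excluded not by an inequality at all but by integrality: an integer coefficient at $\sqrt m$ alongside half-integer coefficients at $\sqrt s$ and $\sqrt t$ forces $m=q$ in bases (B1)--(B3), contradicting $m$ even. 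You anticipate \uv{careful case distinction}, but neither of these two ideas is in your plan, and without them the residual case does not close.
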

\begin{proof}
As in the case with $\M$, the element $\S$ is indecomposable in $\Q(\sqrt{s})$, so by Theorem~\ref{Theorem:Indecomposables} it is indecomposable if $s \equiv 1 \pmod{4}$ (i.e., $\S=\S_{1/2}$) and the integral basis is not (B4), and also if $s \not\equiv 1 \pmod{4}$ (i.e., $\S=\S_1$) and $s=q$.

Writing 
\[
\S_{1/2}=\frac{\cso+\sqrt{s}}{2}=\underbrace{\frac{a}{4}+\frac{b}{4}\sqrt{m}+\frac{c}{4}\sqrt{s}+\frac{d}{4}\sqrt{t}}_{\gamma}+\underbrace{\frac{a'}{4}-\frac{b}{4}\sqrt{m}+\frac{c'}{4}\sqrt{s}-\frac{d}{4}\sqrt{t}}_{\gamma'},
\]
the case of $\S = \S_{1/2}$ in the integral basis (B4) can be treated in the same manner as $\M=\M_{1/2}$ in the proof of Proposition~\ref{Prop:IndecM}, with the minor difference that part \eqref{Lemma:mstInequalities-1} of Lemma~\ref{Lemma:mstInequalities} is used instead of part \eqref{Lemma:mstInequalities-3}.

\bigskip

Let us turn our attention to the cases when $\S=\S_1$. We write
\begin{equation}\label{Eq:S1decomp}
\S_1 = \cs + \sqrt{s} = \underbrace{\frac{a}{2}+\frac{b}{2}\sqrt{m}+\frac{c}{2}\sqrt{s}+\frac{d}{2}\sqrt{t}}_{\gamma}+\underbrace{\frac{a'}{2}-\frac{b}{2}\sqrt{m}+\frac{c'}{2}\sqrt{s}-\frac{d}{2}\sqrt{t}}_{\gamma'},
\end{equation}
and the same approach as before yields
\begin{equation} \label{Eq:S1-dineq}
\Bigl| \frac{d}{2} \Bigr|\sqrt{t} < \min\Bigl\{\frac{a}{2}, \frac{a'}{2}\Bigr\} \leq \frac12\frac{a+a'}{2} = \frac{\cs}{2},
\end{equation}
implying $|d|\sqrt{t} < \cs < \sqrt{s} + 1$. It is clear that $|d| \geq 2$ is impossible; however, $|d| = 1$ is not excluded by this inequality. 

Suppose $|d|=1$. If $a\neq a'$, then the inequality \eqref{Eq:S1-dineq} can be improved to 
\[
\frac12 \sqrt{t} = \Bigl| \frac{d}{2} \Bigr|\sqrt{t} < \min\Bigl\{\frac{a}{2}, \frac{a'}{2}\Bigr\} \leq \frac{\cs-1}{2},
\]
but $\sqrt{t} < \cs -1 < \sqrt{s}$ clearly cannot happen. Thus $a=a' = \frac{1}{2}\cs$. Consequently, the inequalities $\frac{1}{2}\cs > \left|\frac{c}{2}\right|\sqrt{s}$ and $\frac{1}{2}\cs > \bigl|\frac{c'}{2}\bigr|\sqrt{s}$ together with the condition $c+c'= 2$ ensure that $c=c'=1$. All in all, the assumption $d \neq 0$ leads us to a decomposition of the form
\[
\S_1 = \cs + \sqrt{s} = \underbrace{\frac{\cs}{2}+\frac{b}{2}\sqrt{m}+\frac12\sqrt{s}+\frac12\sqrt{t}}_{\gamma} + \underbrace{\frac{\cs}{2}-\frac{b}{2}\sqrt{m}+\frac12\sqrt{s}-\frac12\sqrt{t}}_{\gamma'}.
\]
On a side note, $\gamma$ and $\gamma'$ differ only by an embedding, so total positivity of one implies total positivity of the other; therefore, we shall focus only on $\gamma$. Obviously $b>0$; otherwise in one embedding all three signs would be negative, but the inequality $\frac{\cs-\sqrt{s}-\sqrt{t}}{2}>0$ clearly cannot hold. On the other hand, if we suppose $b\geq 3$, then by Lemma~\ref{Lemma:TotPositiveInequalities} we have
\[
\frac{\cs}{2} > \Bigl| \frac{b}{2}\Bigr| \sqrt{m} \geq \frac{3}{2} \sqrt{m}.
\]
Recall that \eqref{Eq:S1-dineq} implies $\sqrt{t}<\sqrt{s}+1$, hence part \eqref{Lemma:mstInequalities-2} of Lemma~\ref{Lemma:mstInequalities} guarantees that $m$ is even and $2\sqrt{m} > \sqrt{s}$. But this would yield $2\cs > 3\sqrt{s}$; however, this inequality never holds, meaning that either $b=1$ or $b=2$.

If $b=2$, then the coefficient in front of $\sqrt{m}$ is an integer, whereas in front of $\sqrt{s}$ and $\sqrt{t}$ are half-integers. Comparing this with bases (B1)--(B3), this must mean that $m=q$. Nonetheless, we also know that $m$ is even, which is a contradiction.

If $b=1$, then $\gamma=\frac12\left(\cs + \sqrt{m} + \sqrt{s} + \sqrt{t}\right)$ must be totally positive. However, that would mean $(\cs - \sqrt{s}) - (\sqrt{t}-\sqrt{m}) >0$, which is impossible since clearly $\cs - \sqrt{s}<1$, whereas $\sqrt{t}-\sqrt{m}>1$ by Lemma~\ref{Lemma:mstInequalities}\eqref{Lemma:mstInequalities-3}.

So, although it was much more work than before, we have showed that $d=0$. After that, we return to the usual strategy:
\[
\S_1 = \cs + \sqrt{s} = \underbrace{\frac{a}2+\frac{b}2\sqrt{m}+\frac{c}2\sqrt{s}}_{\gamma} + 
\underbrace{\frac{a'}2-\frac{b}2\sqrt{m}+\frac{c'}2\sqrt{s}}_{\gamma'}.
\]
Again, on each of $\gamma$ and $\gamma'$ we apply one specific embedding, and putting the obtained inequalities together, we get
\[
\Bigl|\frac{b}{2}\Bigr|\sqrt{m} < \min\Bigl\{\frac{a-c\sqrt{s}}{2}, \frac{a'-c'\sqrt{s}}{2} \Bigr\} \leq \frac12 \frac{a+a'-(c+c')\sqrt{s}}{2} = \frac12 \bigl(\cs-\sqrt{s}\bigr) < \frac12,
\]
which clearly implies $b=0$. Since $\S$ is indecomposable in $\Q(\sqrt{s})$, we have proven the indecomposability of $\S=\S_1$ in any biquadratic field.
\end{proof}

Let us foreshadow that in the proof of Theorem~\ref{Thm:Main}, we use the indecomposability of $\S$ only in a few cases where $m$ is a prime; in such a situation (and more generally if $m$ and $s$ are coprime) the proof of Proposition~\ref{Prop:IndecS} becomes easier thanks to $t = ms$.

The following statement is interesting on its own, and forms part \eqref{Thm:2M-1} of Theorem~\ref{Thm:Inde}. But for the proof of Theorem~\ref{Thm:Main} we need only the case when $m=85$ (see Subsection~\ref{Subsec:m=85}), where it can also be checked directly. 

\begin{proposition}
\label{Proposition:2M-1indecomposable}
If $2\M-1$ is totally positive, then it is indecomposable in $\Q(\sqrt{m},\sqrt{s})$.
\end{proposition}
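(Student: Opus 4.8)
The plan is to follow the same strategy as in the proofs of Propositions~\ref{Prop:IndecM} and~\ref{Prop:IndecS}. Since $2\M-1$ is the totally positive semiconvergent $\alpha_{-1,2}$ of $\sqrt m$ (resp.\ of $\frac{\sqrt m-1}{2}$), it is already known to be indecomposable in the quadratic subfield $\Q(\sqrt m)$, and in the situations covered by Theorem~\ref{Theorem:Indecomposables} nothing remains to be proven. So I would suppose for contradiction that $2\M-1$ decomposes in $K=\Q(\sqrt m,\sqrt s)$ and, exactly as before, write the decomposition $2\M-1=\gamma+\gamma'$ with $\gamma,\gamma'\in\OKPlus$ so that the two summands have opposite coefficients at $\sqrt s$ and at $\sqrt t$; their coefficients at $1$ and at $\sqrt m$ then add up to the corresponding coefficients of $2\M-1$. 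The goal is to force the coefficient $d$ (at $\sqrt t$) and then the coefficient $c$ (at $\sqrt s$) to vanish, which confines $\gamma,\gamma'$ to $\Q(\sqrt m)$ and contradicts indecomposability there.

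First I would run the trace/total-positivity estimate from Proposition~\ref{Prop:IndecM}: combining the inequalities $\frac{a}{2}>\abs{\frac{d}{2}}\sqrt t$ and $\frac{a'}{2}>\abs{\frac{d}{2}}\sqrt t$ from Lemma~\ref{Lemma:TotPositiveInequalities} (with the obvious modification for the quarter-integer basis (B4)) gives $\abs{d}\sqrt t<2\cm-1$ when $m\equiv2,3\pmod4$ and $\abs{d}\sqrt t<2(\cmo-1)$ when $m\equiv1\pmod4$. The essential new feature, compared with $\M$, is that the constant term of $2\M-1$ is roughly $2\sqrt m$ instead of $\sqrt m$, so these bounds are twice as weak. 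Using $\sqrt t-\sqrt m>1$ (resp.\ $>4$ in basis (B4)) from Lemma~\ref{Lemma:mstInequalities}\eqref{Lemma:mstInequalities-3} one still rules out $\abs{d}\ge2$, but the value $\abs{d}=1$ is now \emph{not} excluded, and this is exactly where the proof becomes harder than that of Proposition~\ref{Prop:IndecM}.

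The main obstacle is therefore the elimination of the case $\abs{d}=1$, which I expect to require a delicate case analysis in the spirit of the $\S=\S_1$ part of Proposition~\ref{Prop:IndecS}, but somewhat more involved. A warning sign is that, unlike there, assuming $a\neq a'$ no longer produces an immediate contradiction (it only yields $\sqrt t<2\cm-2$), so one cannot reduce at once to the symmetric situation $a=a'$. Instead I would exploit all four embeddings of $\gamma$, not merely the consequences recorded in Lemma~\ref{Lemma:TotPositiveInequalities}: the bounds $a,a'>\sqrt t$ together with the fixed sum $a+a'=2(2\cm-1)$ constrain $a,a'$, the conditions $a>\abs{b}\sqrt m$ and $a'>\abs{b'}\sqrt m$ with $b+b'$ fixed bound the small coefficients $b,b'$, and $a>\abs{c}\sqrt s$ bounds $c$. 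After reducing to finitely many configurations of these small integers, each surviving configuration should be contradicted by testing total positivity of $\gamma$ in a single carefully chosen embedding and invoking $\cm-\sqrt m<1$ and the inequalities of Lemma~\ref{Lemma:mstInequalities}; the analogous computation must be carried out for $m\equiv1\pmod4$ in basis (B4).

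Once $d=0$ is established, the rest is routine and mirrors Proposition~\ref{Prop:IndecM}: combining suitable embeddings of $\gamma$ and $\gamma'$ gives $\abs{\frac{c}{2}}\sqrt s<\frac12\bigl(2\cm-1-2\sqrt m\bigr)<\frac12$ (and the corresponding estimate in basis (B4)), so $c=0$ because $\cm-\sqrt m<1$. Then $\gamma,\gamma'\in\OKPlus[\Q(\sqrt m)]$, which contradicts the indecomposability of $2\M-1$ in $\Q(\sqrt m)$ and finishes the proof. Thus the only genuinely new work lies in disposing of the case $\abs{d}=1$.
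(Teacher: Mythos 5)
Your skeleton coincides with the paper's: reduce to indecomposability in $\Q(\sqrt{m})$, force $d=0$ and then $c=0$ by the trace and embedding estimates of Proposition~\ref{Prop:IndecM}, and you correctly locate the crux at $\abs{d}=1$, including the accurate observation that, unlike in Proposition~\ref{Prop:IndecS}, the subcase $a\neq a'$ does not die immediately. The endgame after $d=0$ is also correct.

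However, the elimination of $\abs{d}=1$ --- which is the entire content of this proposition beyond Proposition~\ref{Prop:IndecM} --- is only announced, and the toolkit you commit to is demonstrably insufficient for the one configuration that actually survives your coefficient and parity bounds, namely
\[
2\M-1=\Bigl(\bigl(\cm-\tfrac12\bigr)+\sqrt{m}+\sqrt{s}+\tfrac{\sqrt{t}}{2}\Bigr)+\Bigl(\bigl(\cm-\tfrac12\bigr)+\sqrt{m}-\sqrt{s}-\tfrac{\sqrt{t}}{2}\Bigr),
\]
whose summands are algebraic integers precisely when $t\equiv1\pmod4$. No \emph{single} embedding refutes total positivity of the first summand uniformly: the embedding fixing $\sqrt{s}$ gives a contradiction only when $\sqrt{t}\geq 2\sqrt{s}+1$, the one fixing $\sqrt{t}$ only when $\sqrt{t}\leq 2\sqrt{s}-1$, and in the remaining strip the third embedding works only if $\sqrt{s}-\sqrt{m}\geq\frac12$. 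The leftover case is disposed of in the paper by the auxiliary claim that $t\equiv1\pmod4$ together with $\sqrt{s}-\sqrt{m}\leq\frac12$ forces $\sqrt{t}>4\sqrt{m}$; this is \emph{not} among the inequalities of Lemma~\ref{Lemma:mstInequalities} (it is proved in the style of part~\eqref{Lemma:mstInequalities-2}, using that $t\equiv 1\pmod 4$ implies $4\mid m_0-s_0$), so ``invoking $\cm-\sqrt{m}<1$ and Lemma~\ref{Lemma:mstInequalities}'' will not close this case --- this missing inequality is the one genuinely new idea in the paper's proof. A smaller point: for $m\equiv1\pmod4$ your bound $\abs{d}\sqrt{t}<2(\cmo-1)$ concedes too much; the paper instead observes that a totally positive $2\M_{1/2}-1$ equals $\M_1$, so the whole difficulty is confined to $\M=\M_1$.
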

\begin{proof}
The proof is analogous to the ones of Propositions~\ref{Prop:IndecM} and~\ref{Prop:IndecS}. The only tricky part is to show that if $\M=\M_1$, the decomposition
\[
2\M-1 = \biggl(\Bigl(\cm -\frac12\Bigr) + \sqrt{m} + \sqrt{s} + \frac{\sqrt{t}}{2}\biggr) + \biggl(\Bigl(\cm -\frac12\Bigr) + \sqrt{m} - \sqrt{s} - \frac{\sqrt{t}}{2}\biggr)
\]
is not valid since $\bigl(\cm -\frac12\bigr) + \sqrt{m} + \sqrt{s} + \frac{\sqrt{t}}{2}$ is not a totally positive integer. To prove that, the following claim is used: If $t \equiv 1 \pmod{4}$ and $\sqrt{s}-\sqrt{m}\leq \frac12$, then $\sqrt{t}>4\sqrt{m}$. This claim is proved similarly to Lemma \ref{Lemma:mstInequalities}(\ref{Lemma:mstInequalities-2}).
\end{proof}

\begin{remark} 
If we are interested in indecomposability of $\T$, the only part of Theorem~\ref{Theorem:Indecomposables} which is useful is the one claiming that in bases (B1), (B2) and (B3), all indecomposable integers from $\Q(\sqrt{q})$ remain indecomposable. Thus if $t=q$ and the integral basis is not (B4) -- these conditions can be reformulated as $m\equiv s \not\equiv 1 \pmod{4}$ -- then $\T$ is indecomposable. On the other hand, Example~\ref{Exm:Tdecomp} shows that sometimes $\T$ does decompose. To characterize the cases when that happens could be a direction of a further examination.
\end{remark}

%----------------------------------
\subsection{Some useful properties of \texorpdfstring{$\M$}{M} and \texorpdfstring{$\S$}{S}} \label{Subsec:UsefulProp}

In the study of ternary forms over $\OK$, it is also useful to know whether $n\M$ is a square. As we will see in Corollary~\ref{Cor:NonsquareM}, for $n\leq5$, this occurs only for few cases of $m$ and for some specific biquadratic fields. We will also analyze the same question for $\S$ and $2\S$. In order to use Lemma~\ref{Lemma:MartinSquares}(\ref{Lemma:MartinSquaresItem1}), we divide the question into two separate problems: First, we prove that the considered element is not a square in the appropriate quadratic subfield, and then we examine it in the biquadratic field. This approach yields the following:

\begin{lemma} \label{Lemma:NonsquareM}  
Let $n\leq5$ be a positive integer. Then
\begin{enumerate}
    \item  $n\M_1$ is a square in $\Q(\sqrt{m})$ if and only if 
    \begin{itemize}
        \item $n=2$ 
                and $m\in\{3,5\}$.
    \end{itemize}
    $n\M_{1/2}$ is a square in $\Q(\sqrt{m})$  if and only if 
    \begin{itemize}
        \item $n=1$ or $n=4$ 
        and $m=5$;
        \item $n=3$  and $m\in\{21, 33\}$,
        \item $n=5$ and $m\in\{5, 65, 85\}$.
    \end{itemize}
   
    \item Let $K=\BQ{m}{s}$. Then \\
    $n\M_1$ is a square in $K$ but not in $\Q(\sqrt{m})$ if and only if
        \begin{itemize}
            \item $n=3$, $m=5$ and $K=\BQ56$,
            \item $n=5$ and either $m=3$ and $K=\BQ{3}{10}$, or $m=21$ and $K=\BQ{21}{30}$; 
        \end{itemize}
    $n\M_{1/2}$  can only be a square in $K$ if it is a square in $\Q(\sqrt{m})$. 
\end{enumerate}
\end{lemma}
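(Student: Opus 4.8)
The plan is to reduce the entire statement to a finite, explicitly bounded search, exactly as the preamble to the lemma suggests: first settle the question in the quadratic subfield $\Q(\sqrt{m})$ (part~(1)), then invoke Lemma~\ref{Lemma:MartinSquares}(\ref{Lemma:MartinSquaresItem1}) and Corollary~\ref{Corollary:BecomingSquare} to handle the passage to $K$ (part~(2)). For part~(1), I would write $\M_1 = \cm + \sqrt{m}$ and ask when $n(\cm+\sqrt{m}) = (e+f\sqrt{m})^2 = (e^2+f^2m) + 2ef\sqrt{m}$ for $e,f \in \frac12\Z$ with the appropriate integrality; comparing the rational and irrational parts gives $n\cm = e^2+f^2m$ and $n = 2ef$. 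The crucial observation is that the norm of $n\M_1$ equals $n^2\norm{\Q(\sqrt{m})/\Q}{\M_1} = n^2(\cm^2 - m)$, and since $\M_1$ is totally positive this must be a perfect square; because $0 < \cm^2 - m = (\cm-\sqrt{m})(\cm+\sqrt{m})$ and $\cm - \sqrt{m} < 1$, we get the tight bound $\cm^2 - m < 2\sqrt{m}+1$, so that $n^2(\cm^2-m)$ being a square forces $\cm^2 - m$ into a very short list of values. This caps $m$ for each $n \le 5$ and reduces part~(1) to checking finitely many small $m$; the same trace-comparison argument, with $\M_{1/2} = \frac{\cmo+\sqrt{m}}{2}$ and norm $\frac14(\cmo^2 - m)$, disposes of the $m\equiv 1\pmod 4$ half.

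For part~(2), the key structural input is Lemma~\ref{Lemma:MartinSquares}(\ref{Lemma:MartinSquaresItem1}): if $n\M$ is not a square in $\Q(\sqrt{m})$ but becomes one in $K=\BQ{m}{s}$, then its square root has the form $\frac{c\sqrt{s}+d\sqrt{t}}{2}$, so that
\[
n\M = \frac{c^2 s + d^2 t}{4} + \frac{cd\gcd(s,t)}{2}\sqrt{m}.
\]
Since $\M$ has irrational part $\sqrt{m}$ with a known (nonzero) coefficient, matching the $\sqrt{m}$-coefficient pins down $cd\gcd(s,t)$, and matching the rational part gives a second equation; crucially, Corollary~\ref{Corollary:BecomingSquare} forces every odd divisor of $\gcd(s,t)=\gcd(n_2,n_3)$ to divide $n\M$, which severely constrains the admissible $s,t$. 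Comparing traces once more yields $n\cm = \frac14(c^2 s + d^2 t)$ (respectively the $\M_{1/2}$ analogue), an upper bound on $s$ and $t$ in terms of the already-bounded $m$, so that only finitely many candidate fields survive. I expect the final enumeration to single out precisely the fields listed — $\BQ56$, $\BQ{3}{10}$, $\BQ{21}{30}$ — after checking integrality of the putative square root against the integral bases (B1)--(B4).

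The last clause, that $n\M_{1/2}$ can become a square in $K$ only if it is already a square in $\Q(\sqrt{m})$, is where I expect the main subtlety. Here $m \equiv 1 \pmod 4$, so $K$ is of type (B4) (or the relevant half-integral basis), and the argument must rule out the form $n\M_{1/2} = \bigl(\frac{c\sqrt s + d\sqrt t}{2}\bigr)^2$ outright. The plan is to exploit the specific denominator: $\M_{1/2}$ has $\sqrt{m}$-coefficient exactly $\frac12$, so matching coefficients forces $cd\gcd(s,t) = n$, a very small number, which together with the trace bound and the divisibility constraint from Corollary~\ref{Corollary:BecomingSquare} leaves no consistent solution. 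The main obstacle throughout is bookkeeping: one must carefully track which integral basis (B1)--(B4) applies, enforce the parity/quarter-integer conditions of Remark~\ref{Rem:BasisElements} on $c,d$, and verify that each surviving numerical candidate genuinely yields an algebraic integer square root rather than merely a field-theoretic square. These verifications are routine but error-prone, and I would organize them into a single table of $(n,m)$ pairs, discarding each by either a norm/trace inequality or an explicit integrality failure.
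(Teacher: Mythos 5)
Your overall strategy --- coefficient comparison in $\Q(\sqrt m)$ for part (1), then Lemma~\ref{Lemma:MartinSquares}(\ref{Lemma:MartinSquaresItem1}) together with Corollary~\ref{Corollary:BecomingSquare} and the equation $cd\gcd(s,t)=2n$ (resp.\ $=n$) for part (2) --- is the same as the paper's, and your part (2) would go through essentially as written. The gap is in part (1), where the mechanism you single out as ``crucial'' does not do the job. The condition that $\norm{\Q(\sqrt m)/\Q}{n\M_1}=n^2(\cm^2-m)$ be a perfect square is equivalent to $\cm^2-m$ being a perfect square, and this holds for infinitely many square-free $m$: whenever $m=k^2-1$ (e.g.\ $m=3,15,35,99,\dots$) one has $\cm^2-m=1$. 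So the norm condition does not ``cap $m$'', and your reduction of part (1) to finitely many small $m$ collapses at exactly this step. (Concretely, for $m=15$ the norm of $2\M_1=8+2\sqrt{15}$ is the perfect square $4$, yet $2\M_1$ is not a square.)

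The bound on $m$ has to come from the two coefficient equations you already wrote down, not from the norm. From $n=2ef$ there are only finitely many admissible pairs $(e,f)$ (with $f\neq0$), and substituting any such pair into $n\cm=e^2+f^2m$ together with the elementary inequality $\cm<\sqrt m+1$ gives $e^2+f^2m<n\sqrt m+n$, which does bound $m$. This is exactly what the paper does, phrased via the conjugate: $(a-b\sqrt m)^2=2n\bigl(\cmo-\sqrt m\bigr)<4n$ forces $\abs{a-b\sqrt m}<2\sqrt n$ and hence $m<\bigl(\frac{2\sqrt n+a}{b}\bigr)^2$. You also need the parity argument for odd $n$ (the equation $ab=2n$ with $a\equiv b\pmod 2$ is insoluble for odd $n$, so odd multiples of $\M_1$ are never squares in $\Q(\sqrt m)$); your framework supports it but you do not state it. A smaller point: in part (2) you propose to bound $s,t$ ``in terms of the already-bounded $m$'', but there $m$ is not bounded a priori --- it ranges over all $m$ for which $n\M_1$ is not a square in the subfield. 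The finiteness in part (2) comes solely from $cd\gcd(s,t)=2n$, which bounds $m_0=\gcd(s,t)$ and hence, via $t_0<s_0<m_0$, all of $m$, $s$ and $t$ at once.
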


\begin{remark*}
 Recall that $\M_1$ is defined for all square-free values of $m$, but $\M_{1/2}$ is defined only for those square-free $m$ which satisfy $m\equiv1\pmod4$; thus, the above lemma does not cover $n\M_{1/2}$ for $m\not\equiv1\pmod4$ even though it might be an algebraic integer. Moreover, note that in (1), we do not use the fact that $m<s<t$. 
\end{remark*}

\begin{proof}
(1)  
First, consider the odd multiples of $\M_1$; let $a,b\in\Z$  be such that $a\equiv b \pmod2$ and
\[
\biggl(\frac{a+b\sqrt{m}}{2}\biggr)^2=(2k+1)\cm+(2k+1)\sqrt{m}
\]
for some $k\in\Z$, $k\geq0$, and square-free $m\in\Z$, $m>1$. Comparing the coefficients in front of $\sqrt{m}$ gives the equality $2ab=4(2k+1)$, which cannot be satisfied with $a\equiv b \pmod2$. Thus, $(2k+1)\M_1$ is not a square in $\Q(\sqrt{m})$ for any $k$ and any $m$.

Observe that $4\M_1$ is a square if and only if $\M_1$ is a square (more generaly, for $k,l\in\Z$, it holds that $k^2l\M$ is a square if and only if $l\M$ is a square). Thus, in the case of $n\M_1$, it only remains to deal with $n=2$. First note that no halves can appear in the hypothetical square root of $2\M_1$. Hence, let $a,b\in\Z$ be such that
\begin{equation} \label{Eq:2M1}
    \left(a+b\sqrt{m}\right)^2=2\left(\cm+\sqrt{m}\right).
\end{equation}
Comparing the coefficients in front of $\sqrt{m}$, we see that necessarily $a=b=\pm1$.  Then the rest of the equality yields  $1+m=2\cm$
, which is satisfied only for $m\leq5$. Evaluating \eqref{Eq:2M1} at $a=b=\pm1$ and $m\in\{2,3,5\}$ gives that $2\M_1$ is a square if and only if $m\in\{3,5\}$.

\bigskip
For the multiples of $\M_{1/2}$, consider $n\in\Z$, $n>0$, and let $a,b\in\Z$ be such that $a\equiv b\pmod2$  and 
\begin{equation}\label{Eq:nM12}
\left(\frac{a+b\sqrt{m}}{2}\right)^2=n\frac{\cmo+\sqrt{m}}{2}; \end{equation}
note that we can assume without loss of generality $a,b>0$. Taking the conjugates gives
\[\left(a-b\sqrt{m}\right)^2=2n\bigl(\cmo-\sqrt{m}\bigr)<4n,\]
i.e., $\left|a-b\sqrt{m}\right|<2\sqrt{n}$, which produces the upper bound
\begin{equation} \label{Eq:nM12odhad}
    m<\left(\frac{2\sqrt{n}+a}{b}\right)^2. 
\end{equation}
Comparing the coefficients in front of $\sqrt{m}$ in \eqref{Eq:nM12} yields $n=ab$, which gives us for a fixed value of $n$ only a few possibilities for $a,b$. It is just a matter of a simple computation to check which $m$'s satisfy both \eqref{Eq:nM12odhad} and \eqref{Eq:nM12}. 

\bigskip
(2) 
Let us start with a simple observation: For a given $m_0\in\N$, there are only finitely many biquadratic fields $K$ such that $\gcd(s,t)=m_0$. This becomes obvious if we write $s=m_0t_0$, $t=m_0s_0$, $m=s_0t_0$, since the inequality $m<s<t$ translates into $m_0>s_0>t_0$. Thus every field with $\gcd(s,t)=m_0$ is determined by the choice of $s_0$ and $t_0$, which are square-free numbers ($t_0=1$ is possible) smaller than $m_0$ such that $m_0$, $s_0$, $t_0$ are pairwise coprime.

Suppose now that $n\M_1 = n\cm + n\sqrt{m}$ is not a square in $\Q(\sqrt{m})$ but becomes a square in $K$. Invoking Lemma~\ref{Lemma:MartinSquares}(\ref{Lemma:MartinSquaresItem1}), we know that this means $\bigl(\frac{c\sqrt{s}+d\sqrt{t}}{2}\bigr)^2=n\M_1$ for some $c,d\in\Z$. Comparing the coefficients in front of $\sqrt{m}$ and writing $m_0$ for $\gcd(s,t)$, we obtain $\frac{2cdm_0}{4}=n$, i.e., $cdm_0=2n$. Thus $m_0$ is some divisor of $2n$, which gives us only a few concrete fields $\BQ{m}{s}$ which have to be checked. Through realizing that $c\equiv d\equiv n \pmod2$, one can reduce the number of possibilities for $c,d$, and consequently also for $m_0$, significantly. The few remaining cases can be checked directly, concluding this part of the proof.

For $n\M_{1/2}$, we proceed analogously: Again, we write $m_0$ for $\gcd(s,t)$; then the requirement $\bigl(\frac{c\sqrt{s}+d\sqrt{t}}{2}\bigr)^2=n\M_{1/2}$ leads to $cdm_0=n$, thus a simple computation shows that $n\M_{1/2}$ never becomes a square for $n=1,2,4$. Furthermore, for $n=3$ the only possible field is $\BQ{2}{3}$ with $m_0=3$; and for $n=5$, we have $m_0=5$, meaning that both $s_0$ and $t_0$ are chosen from $\{1,2,3\}$.  However, in all the resulting fields we have $m\not\equiv 1 \pmod{4}$, so there is no $\M_{1/2}$ and considering $n\M_{1/2}$ is utterly irrelevant.
\end{proof}

In most situations, we are interested whether $n\M$  (equal either to $n\M_1$ or to $n\M_{1/2}$ according to the value of $m$) is a square without distinguishing between quadratic and biquadratic fields.

\begin{corollary} \label{Cor:NonsquareM}
In a biquadratic field $K$, the following holds: 
\begin{enumerate}
    \item $\M$ and $4\M$ are squares if and only if $m=5$, \label{Cor:NonsquareM_14}
    \item $2\M$ is a square if and only if $m=3$, \label{Cor:NonsquareM_2}
    \item $3\M$ is a square if and only if $m\in\{21,33\}$, \label{Cor:NonsquareM_3}
    \item $5\M$ is a square if and only if $m\in\{5,65,85\}$ or $K=\BQ{3}{10}$. \label{Cor:NonsquareM_5}
\end{enumerate}
\end{corollary}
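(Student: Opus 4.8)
The plan is to read off Corollary~\ref{Cor:NonsquareM} directly from the two parts of Lemma~\ref{Lemma:NonsquareM}. For a fixed multiplier $n \in \{1,2,3,4,5\}$, the element $n\M$ is a square in $K$ precisely when it is either already a square in the quadratic subfield $\Q(\sqrt{m})$ (governed by part~(1) of the lemma) or becomes a square only after passing to $K$ (governed by part~(2)). Thus for each $n$ I would simply collect the admissible values of $m$ (and the exceptional fields $K$) from both lists and take their union.

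The one point requiring care -- and the only real obstacle -- is that the lemma is phrased in terms of $\M_1$ and $\M_{1/2}$ separately, whereas $\M$ denotes $\M_1$ exactly when $m \equiv 2,3 \pmod 4$ and $\M_{1/2}$ exactly when $m \equiv 1 \pmod 4$. Consequently I must split the analysis according to the residue of $m$ modulo $4$ and retain only those entries of the lemma whose congruence class is compatible with the definition of $\M$. Concretely, several entries of Lemma~\ref{Lemma:NonsquareM} concern $\M_1$ for values $m \equiv 1 \pmod 4$ -- namely $m=5$ for $2\M_1$ in part~(1), $m=5$ (with $K=\BQ56$) for $3\M_1$ in part~(2), and $m=21$ (with $K=\BQ{21}{30}$) for $5\M_1$ in part~(2); these must be discarded, because for such $m$ the symbol $\M$ stands for $\M_{1/2}$, not $\M_1$. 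These are exactly the traps that would otherwise produce spurious cases.

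Carrying this out, for $m \equiv 2,3 \pmod 4$ (where $\M=\M_1$) the only surviving cases are that $2\M$ is a square iff $m=3$ and $5\M$ is a square iff $K=\BQ{3}{10}$; the multipliers $n=1,3,4$ contribute nothing, since $\M_1$, $3\M_1$ and $4\M_1$ are squares in neither $\Q(\sqrt{m})$ nor $K$. For $m \equiv 1 \pmod 4$ (where $\M=\M_{1/2}$), part~(2) tells us that $n\M_{1/2}$ can be a square in $K$ only when it is already one in $\Q(\sqrt{m})$, so every case is read off from part~(1): using in addition the observation that $k^2 l\M$ is a square iff $l\M$ is, we obtain that $\M$ and $4\M$ are squares iff $m=5$, that $3\M$ is a square iff $m \in \{21,33\}$, and that $5\M$ is a square iff $m \in \{5,65,85\}$. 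Taking the union over the two congruence classes yields exactly the four statements of the corollary, with no further computation required, as all the underlying arithmetic has already been performed in the proof of Lemma~\ref{Lemma:NonsquareM}.
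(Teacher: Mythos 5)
Your proposal is correct and matches the paper's intent exactly: the paper gives no separate proof of Corollary~\ref{Cor:NonsquareM}, presenting it as an immediate consequence of Lemma~\ref{Lemma:NonsquareM} obtained by merging its two parts and restricting to the variant ($\M_1$ or $\M_{1/2}$) that actually equals $\M$ for the given residue of $m$ modulo $4$. Your handling of the discarded entries ($2\M_1$ for $m=5$, $3\M_1$ in $\BQ56$, $5\M_1$ in $\BQ{21}{30}$) is precisely the bookkeeping the corollary implicitly relies on.
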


The following lemma shows that $\S$ is almost never a square. Note that it does not say anything about $\S_1$ if $s \equiv 1 \pmod{4}$, since then $\S=\S_{1/2}$.

\begin{lemma}
\label{Lemma:NonsquareS}
The element $\S\in\OK$ is a square only in the fields $\BQ{2}{3}$, $\BQ25$ and $\BQ35$; the element $2\S$ is a square only in $\BQ23$ and $\BQ25$. More specifically:
 \begin{enumerate}
     \item If $\S$ is nonsquare in $\Q(\sqrt{s})$, then it is nonsquare in the biquadratic field $\BQ{m}{s}$ with the exception of $\BQ{2}{3}$ where $\S=2+\sqrt{3} = \bigl(\frac{\sqrt{2}+\sqrt{6}}{2}\bigr)^2$.
     \item If $2\S$ is nonsquare in $\Q(\sqrt{s})$, then it is nonsquare in the biquadratic field $\BQ{m}{s}$ with the exception of $\BQ{2}{5}$ where $2\S=3+\sqrt{5} = \bigl(\frac{\sqrt{2}+\sqrt{10}}{2}\bigr)^2$.
 \end{enumerate}
\end{lemma}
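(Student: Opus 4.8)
The plan is to follow the same two-step template as in the proof of Lemma~\ref{Lemma:NonsquareM}: first decide when $\S$ and $2\S$ are squares already in the quadratic subfield $\Q(\sqrt s)$, and then, via Lemma~\ref{Lemma:MartinSquares}(\ref{Lemma:MartinSquaresItem1}), decide when a nonsquare of $\Q(\sqrt s)$ can \emph{become} a square in $K=\BQ ms$. Throughout, $\S$ equals $\S_1=\cs+\sqrt s$ when $s\equiv2,3\pmod4$ and $\S_{1/2}=\frac{\cso+\sqrt s}{2}$ when $s\equiv1\pmod4$; correspondingly $2\S$ equals $2\S_1=2\cs+2\sqrt s$ or $2\S_{1/2}=\cso+\sqrt s$.

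For the subfield step I would simply invoke Lemma~\ref{Lemma:NonsquareM}(1), which -- as its following remark records -- makes no use of $m<s<t$ and hence applies verbatim with $m$ replaced by $s$. Reading off the relevant multiples: $\S_{1/2}$ (the case $n=1$ of $\M_{1/2}$) is a square in $\Q(\sqrt s)$ exactly when $s=5$, while $\S_1$ (an odd multiple of $\M_1$) is never a square; likewise $2\S_1$ (the case $n=2$ of $\M_1$) is a square exactly when $s\in\{3,5\}$, and since $s=5\equiv1$ forces $\S=\S_{1/2}$ this leaves $s=3$, whereas $2\S_{1/2}$ (the case $n=2$, which is \emph{absent} from the $\M_{1/2}$ list) is never a square. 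These subfield squares account precisely for $\S$ being a square in $\BQ25$ and $\BQ35$ (both have $s=5$), through $\S_{1/2}=\bigl(\frac{1+\sqrt5}2\bigr)^2$, and for $2\S$ being a square in $\BQ23$ (which has $s=3$), through $2\S_1=(1+\sqrt3)^2$.

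The heart of the matter is the biquadratic step. Assume $\S$ (resp.\ $2\S$) is nonsquare in $\Q(\sqrt s)$ but a square in $K$. By Lemma~\ref{Lemma:MartinSquares}(\ref{Lemma:MartinSquaresItem1}) its square root has the form $\beta=\frac{c\sqrt m+d\sqrt t}2$ with $c,d\in\Z$, and the shape of the integral bases forces $c\equiv d\pmod2$ (cf.\ the proof of Corollary~\ref{Corollary:BecomingSquare}). Writing $m=s_0t_0$, $s=m_0t_0$, $t=m_0s_0$, we have $\sqrt m\sqrt t=s_0\sqrt s$ with $s_0=\gcd(m,t)$, so
\[
\beta^2=\frac{c^2m+d^2t}4+\frac{cd\,s_0}{2}\sqrt s .
\]
The decisive observation is that $m<s<t$ is equivalent to $m_0>s_0>t_0\geq1$, whence $s_0\geq2$. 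Comparing the coefficient of $\sqrt s$ then gives $cd\,s_0=1$ for $\S_{1/2}$, $cd\,s_0=2$ for $\S_1$ and $2\S_{1/2}$, and $cd\,s_0=4$ for $2\S_1$. The equation $cd\,s_0=1$ is impossible since $s_0\geq2$; the equation $cd\,s_0=4$ forces $s_0=2$ (as $s_0$ is square-free) and $cd=2$, contradicting $c\equiv d\pmod2$. In the surviving case $cd\,s_0=2$ we get $s_0=2$ and $c=d=\pm1$, hence $m=2$ and $t=2s$; comparing the rational parts of $\beta^2$ with those of $\S_1$ (resp.\ $2\S_{1/2}$) yields $\cs=\frac{1+s}2$ (resp.\ $\cso=\frac{1+s}2$).

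Finally I would solve these ceiling equations: each reads $s=2\cs-1$ (resp.\ $s=2\cso-1$), and the defining inequalities of the ceiling confine $\cs$ (resp.\ the odd number $\cso$) to a short range, leaving $s=3$ with $K=\BQ23$ and $\S_1=2+\sqrt3=\bigl(\frac{\sqrt2+\sqrt6}2\bigr)^2$ (resp.\ $s=5$ with $K=\BQ25$ and $2\S_{1/2}=3+\sqrt5=\bigl(\frac{\sqrt2+\sqrt{10}}2\bigr)^2$) as the only square-free value with the correct residue. I expect the main obstacle to be exactly this biquadratic step -- extracting $s_0\geq2$ from $m<s<t$ and marrying the $\sqrt s$-coefficient equation with the parity $c\equiv d\pmod2$ to kill every case but the two exceptional fields -- rather than the subfield computation, which is handed to us directly by Lemma~\ref{Lemma:NonsquareM}.
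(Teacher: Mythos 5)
Your proof is correct, and it follows the paper's two-step template (settle the question in $\Q(\sqrt s)$ via Lemma~\ref{Lemma:NonsquareM}, then use Lemma~\ref{Lemma:MartinSquares}(\ref{Lemma:MartinSquaresItem1}) for the passage to $K$), but the biquadratic step is executed differently. The paper first invokes Corollary~\ref{Corollary:BecomingSquare} to force $\gcd(m,t)\in\{1,2\}$ and hence $m=2$, and then uses a clean reduction -- a nonsquare $\alpha\in\OK[\Q(\sqrt s)]$ that becomes a square in $\BQ{2}{s}$ must have $2\alpha$ a square in $\Q(\sqrt s)$, since $\bigl(\tfrac{a\sqrt2+b\sqrt{2s}}{2}\bigr)^2=\alpha$ rewrites as $(a+b\sqrt s)^2=2\alpha$ -- which hands the remaining case straight back to Lemma~\ref{Lemma:NonsquareM} (asking when $2\S$, resp.\ $4\S$, is a square in $\Q(\sqrt s)$). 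You instead unfold the computation behind Corollary~\ref{Corollary:BecomingSquare}: comparing the $\sqrt s$-coefficient gives $cd\,s_0\in\{1,2,4\}$, and the combination of $s_0=\gcd(m,t)\geq2$ (your correct translation of $m<s<t$) with the parity constraint $c\equiv d\pmod2$ kills all but the case $s_0=2$, $c=d=\pm1$; you then finish by solving the ceiling equations $\cs=\tfrac{1+s}2$ and $\cso=\tfrac{1+s}2$ directly rather than by a second appeal to Lemma~\ref{Lemma:NonsquareM}. Both routes are sound and of comparable length; the paper's doubling trick recycles existing lemmas and avoids the explicit ceiling analysis, while your version is more self-contained and makes the role of the parity condition and of $s_0\geq2$ fully visible.
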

\begin{proof}
If $\S$ (resp.\ $2\S$) is a square already in $\Q(\sqrt{s})$, then by the first part of Lemma~\ref{Lemma:NonsquareM} and the subsequent remark we get $s=5$ (resp.\ $s=3$). Thus it remains to prove the statements (1) and~(2).

First we prove that if $\S$ or $2\S$ becomes a square in $\BQ{m}{s}$, then $m=2$: Observe that neither $\S$ nor $2\S$ has odd integer divisors, so if either of them is a square in $\BQ{m}{s}$, Corollary~\ref{Corollary:BecomingSquare} gives $\gcd(m,t)=1$ or $2$. The first case is clearly impossible since then $s=mt$ contradicts $m<s<t$. If $\gcd(m,t)=2$, then the required inequality $s=\frac{mt}{4}<t$ together with $2 \mid m$ yields $m=2$.

So it remains to examine the field $\BQ{2}{s}$. We shall make a general observation: If a nonsquare $\alpha\in\OK[\Q(\sqrt{s})]$ becomes a square in $\BQ{2}{s}$, then $2\alpha$ is a square in $\Q(\sqrt{s})$. Indeed, from Lemma~\ref{Lemma:MartinSquares}(\ref{Lemma:MartinSquaresItem1}) we get $\alpha = \bigl( \frac{a\sqrt{2}+b\sqrt{2s}}{2}\bigr)^2$, which can be rewritten as $2\alpha = (a+b\sqrt{s})^2$. This proves the observation.

In $\Q(\sqrt{s})$, the number $2\S$ is a square only for $s=3$ and $4\S$ only for $s=5$ (by Lemma~\ref{Lemma:NonsquareM} and the subsequent Remark). Thus, our observation yields that $\S$ can become a square only in $\BQ{2}{3}$ and $2\S$ only in $\BQ{2}{5}$; indeed, it is the case both for $\S$ and $2\S$. 
\end{proof}

Our next concern is to study additive decompositions of $2\M$. In particular, we are interested in the question how $2\M$ can be expressed as a sum of a square of an algebraic integer and of a totally positive element. In some cases, $2\M-1$ happens to be totally positive; then it is possible for $2\M$ to be written as $(2\M-1)+1$. Therefore, in such a situation, we need to know whether $2\M-1$ is a square.

\begin{lemma} \label{Lemma:2M-1isNonsquare}
The element $2\M-1$ is a square if and only if $m=2$.
\end{lemma}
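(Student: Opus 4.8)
The plan is to follow the two-step template of Lemmas~\ref{Lemma:NonsquareM} and~\ref{Lemma:NonsquareS}: first decide exactly when $2\M-1$ is a square already in the quadratic subfield $\Q(\sqrt m)$ by comparing coefficients (splitting according to $m\bmod 4$), and then invoke Corollary~\ref{Corollary:BecomingSquare} to show that $2\M-1$ cannot newly become a square upon passing to the biquadratic field $K=\BQ ms$. Since being a square in $K$ is equivalent to being a square in $\OK$, these two steps together settle the lemma.

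First I would carry out the subfield computation. For $m\equiv2,3\pmod4$ we have $2\M-1=(2\cm-1)+2\sqrt m\in\Z[\sqrt m]$; if $(x+y\sqrt m)^2=2\M-1$ with $x,y\in\Z$, then comparing the coefficients of $\sqrt m$ forces $xy=1$, hence $x=y=\pm1$, and the rational part gives $m+1=2\cm-1$, i.e.\ $m=2(\cm-1)$. Because $\cm-1=\lfloor\sqrt m\rfloor<\sqrt m$ (as $\sqrt m$ is irrational), this yields $m<2\sqrt m$, so $m<4$; checking $m\in\{2,3\}$ leaves only $m=2$, where indeed $2\M-1=3+2\sqrt2=(1+\sqrt2)^2$. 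For $m\equiv1\pmod4$ we have $2\M-1=(\cmo-1)+\sqrt m$; writing a hypothetical root as $\frac{a+b\sqrt m}2$ with $a\equiv b\pmod2$ and comparing the coefficient of $\sqrt m$ gives $ab=2$, which is incompatible with $a\equiv b\pmod2$. Hence $2\M-1$ is never a square in $\Q(\sqrt m)$ when $m\equiv1\pmod4$.

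Next I would rule out $2\M-1$ becoming a square only in $K$. Applying Corollary~\ref{Corollary:BecomingSquare} with $F=\Q(\sqrt m)$, so that $n_2=s$, $n_3=t$ and $\gcd(n_2,n_3)=\gcd(s,t)=m_0$, we find: if $2\M-1$ is not a square in $\Q(\sqrt m)$ but becomes one in $K$, then every odd divisor of $m_0$ divides $2\M-1$. The crucial observation is that the only odd integer dividing $2\M-1$ is $1$. Indeed, for $m\equiv2,3\pmod4$ the element $2\M-1=(2\cm-1)+2\sqrt m$ has $\sqrt m$-coefficient $2$, while for $m\equiv1\pmod4$, rewriting $\sqrt m=2\omega_m-1$ with $\omega_m=\frac{1+\sqrt m}2$ gives $2\M-1=(\cmo-2)+2\omega_m$, with $\omega_m$-coefficient $2$; in either case an odd $d$ dividing $2\M-1$ must divide $2$ and hence equals $1$. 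On the other hand, the inequalities $m_0>s_0>t_0\geq1$ force $m_0\geq3$, and as $m_0$ is square-free it must carry an odd prime factor. This contradiction shows $2\M-1$ acquires no new square status in $K$.

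Combining the two steps gives the claim: $2\M-1$ is a square in $K$ if and only if it is a square in $\Q(\sqrt m)$, which happens exactly when $m=2$. I expect the main obstacle to be the divisibility bookkeeping in the third step — verifying in each parity case that $2\M-1$ carries no odd divisor (so that Corollary~\ref{Corollary:BecomingSquare} actually bites) and confirming that $m_0\geq3$ always contributes an odd prime — rather than the elementary coefficient comparison of the first step.
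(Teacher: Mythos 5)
Your proposal is correct and follows essentially the same two-step strategy as the paper: a coefficient comparison in $\Q(\sqrt m)$ (forcing $ab=1$ and hence $2\cm-1=m+1$, which pins down $m=2$), followed by Corollary~\ref{Corollary:BecomingSquare} together with the observations that $2\M-1$ has no nontrivial odd divisor and that $\gcd(s,t)=m_0>s_0>t_0\geq1$ forces $m_0\geq3$. The only cosmetic difference is in the case $m\equiv1\pmod4$, where the paper reduces $2\M_{1/2}-1$ to $\M_1$ and cites Lemma~\ref{Lemma:NonsquareM}, while you rule it out directly via the parity obstruction $ab=2$ with $a\equiv b\pmod2$; both are valid.
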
 

\begin{proof}
First of all, let us note that when $\M=\M_{1/2}$, then either $2\M-1$ is not totally positive and thus not a square, or $2\M-1=\M_1$, which is not a square in any case of $m\equiv 1\pmod{4}$, see Lemma~\ref{Lemma:NonsquareM}.

Let us now turn to $\M=\M_1$. First, suppose that $2\M_1-1$ is a square in the subfield $\Q(\sqrt{m})$, i.e., $2\M_1-1=(a+b\sqrt{m})^2$ for some $a,b\in\Z$. Then we have
\[
2\M_1-1=2\cm-1+2\sqrt{m}=a^2+b^2m+2ab\sqrt{m}.
\]
It follows that $ab=1$; the resulting equality $2\cm - 1 = m+1$ holds for $m=2$, whereas for $m\geq 3$ the right-hand side is too large.

Finally we prove that if $2\M-1$ was not a square in $\Q(\sqrt{m})$, it does not become one in $K$ either: Since $2\M-1$ has no nontrivial odd divisor, using Corollary~\ref{Corollary:BecomingSquare} we see that $2\M-1$ can become a square only in a field with $\gcd(s,t)=1$ or $2$; however, there are no such fields.
\end{proof}

\begin{lemma}\label{Lemma:SquaresUnder2M}
Let $2\M=\omega^2+\gamma$ for some $\omega\in\OK$ and $\gamma\in\OKPlus$. Then either $\omega^2=0$ or $\omega^2=1$ with the exception of $m\in\{2,3,5\}$. In particular, if $m\notin\{2,3,5\}$, then the element $2\M$ cannot be written as a sum of two squares in $\OK$.
\end{lemma}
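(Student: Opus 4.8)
The plan is to dispose of the ``in particular'' clause first, since it is an immediate consequence of the main assertion together with Corollary~\ref{Cor:NonsquareM} and Lemma~\ref{Lemma:2M-1isNonsquare}. Suppose $m\notin\{2,3,5\}$ and $2\M=\omega^2+\delta^2$ with $\omega,\delta\in\OK$. If $\delta=0$ then $2\M$ is a square, which by Corollary~\ref{Cor:NonsquareM}\eqref{Cor:NonsquareM_2} happens only for $m=3$; hence $\delta\neq0$, so $\delta^2\succ0$, and the main claim applies with $\gamma=\delta^2$, yielding $\omega^2\in\{0,1\}$. But $\omega^2=0$ makes $2\M=\delta^2$ a square (again $m=3$), while $\omega^2=1$ makes $2\M-1=\delta^2$ a square, which by Lemma~\ref{Lemma:2M-1isNonsquare} forces $m=2$; both are excluded, so $2\M$ is not a sum of two squares.

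For the main assertion I would write $\gamma=2\M-\omega^2\succ0$ and expand $\omega=x+y\sqrt m+z\sqrt s+w\sqrt t$ with $x,y,z,w\in\Q$ (of denominator $2$ or $4$ according to the integral basis). The crucial feature of $2\M$ is that it lies in $\Q(\sqrt m)$ and that in the two embeddings sending $\sqrt m\mapsto-\sqrt m$ its value is $2\cm-2\sqrt m<2$ (resp.\ $\cmo-\sqrt m<2$ when $\M=\M_{1/2}$), because $\cm-\sqrt m<1$. Since $\gamma\succ0$ forces $\sigma(\omega)^2<\sigma(2\M)$ for every embedding $\sigma$, in those two embeddings $|\sigma(\omega)|<\sqrt2$, while the remaining two give only $\sigma(\omega)^2<2\cm+2\sqrt m=:L^2$.

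Next I would isolate the coefficients of $\omega$ by taking sums and differences of these four embedded values: the two ``small'' embeddings bound $|x-y\sqrt m|$ and $|z\sqrt s-w\sqrt t|$ by $\sqrt2$, and the two ``large'' ones bound $|x+y\sqrt m|$ and $|z\sqrt s+w\sqrt t|$ by $L$. Combining them gives $|z|<\tfrac{L+\sqrt2}{2\sqrt s}$ and $|w|<\tfrac{L+\sqrt2}{2\sqrt t}$, quantities of order $m^{1/4}/\sqrt s$ and $m^{1/4}/\sqrt t$; as $s,t>m$ and a nonzero $z$ or $w$ has absolute value at least $\tfrac14$, the growth of $s$ and $t$ relative to $m$ (Lemma~\ref{Lemma:mstInequalities}, in particular the sharper bounds in basis (B4)) forces $z=w=0$ in all but finitely many fields. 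Thus $\omega\in\Q(\sqrt m)\cap\OK$, and there the totally positive element $\gamma=(2\cm-x^2-my^2)+(2-2xy)\sqrt m$ must satisfy $2\cm-x^2-my^2>|2-2xy|\sqrt m$ by Lemma~\ref{Lemma:TotPositiveInequalities}; bounding $y$ and then $x$ shows that for $m\notin\{2,3,5\}$ the only possibility is $y=0$ and $x^2<2$, i.e.\ $\omega^2\in\{0,1\}$. The values $m\in\{2,3,5\}$ are exactly those for which a nonzero $y$ survives, witnessed by $\omega=1+\sqrt2$ for $m=2$ and $\omega=\tfrac{1+\sqrt5}{2}$ for $m=5$.

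The \textbf{main obstacle} is the reduction $z=w=0$: for the finitely many small fields in which $s$ (or $t$) lies too close to $m$ for the uniform estimate to bite---for instance $\BQ{13}{17}$---one cannot rule out a genuinely biquadratic $\omega$ by these inequalities alone, and the residual fields must be checked directly, a finite computation of the kind carried out elsewhere in the paper. A secondary point requiring care is the final $\Q(\sqrt m)$ step, where the full total-positivity inequality, rather than the trace bound alone, is needed to eliminate the half-integer values $y=\pm\tfrac12$ that the trace would otherwise permit.
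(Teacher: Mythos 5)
Your argument is sound and reaches the same endpoint as the paper's, but the engine driving the reduction is different. The paper first classifies \emph{all} additive decompositions $2\M=\gamma'+\gamma$ lying in $\Q(\sqrt m)$ (only $0+2\M$, $\M+\M$, $1+(2\M-1)$), kills the case where exactly one of the $\sqrt s,\sqrt t$ coefficients is nonzero by a sign-of-embedding argument, and only then, for an $\omega$ with (almost) all coefficients nonzero, compares \emph{traces}: $\Tr{K/\Q}{\omega^2}\le\Tr{K/\Q}{2\M}$ gives $a^2+b^2m+c^2s+d^2t\le 8\cm$, hence $1+m+s\le 8\cm$ and $m\le 19$ (and even just $m=5$ in the $\M_{1/2}$ non-(B4) case), leaving a handful of fields for direct computation. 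You instead bound each embedding $\sigma(\omega)^2<\sigma(2\M)$ separately and recover the individual coefficients by taking sums and differences of the four conjugates; this is the ``linear'' version of the same idea and yields $|z|\sqrt s, |w|\sqrt t<\tfrac12(L+\sqrt2)$ with $L^2=\sigma_1(2\M)$. It works, but it is lossier: the trace inequality couples $m$, $s$, $t$ in one stroke, whereas your per-coefficient bounds give roughly $s\lesssim 4\sqrt m$ (resp.\ $t\lesssim 16\sqrt m$ in (B4)), so your residual finite set is noticeably larger (roughly $m\lesssim 45$ rather than $m\le 19$), and you correctly flag that these leftover fields, e.g.\ $\BQ{13}{17}$, still need a case-by-case check of the kind the paper also delegates to computation. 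Your dispatch of the ``in particular'' clause via Corollary~\ref{Cor:NonsquareM} and Lemma~\ref{Lemma:2M-1isNonsquare} is exactly right, and your final $\Q(\sqrt m)$ step is the same computation the paper compresses into ``it is easy to check''; just note that for values like $m=13$ the crude bound on $y$ alone leaves $|y|=\tfrac12$ alive and the second bound on $x$ is genuinely needed, and that $m=3$ is also a live exception (in $\BQ23$ one has $2\M=\bigl(\tfrac{\sqrt2+\sqrt6}{2}\bigr)^2+\M$ with $\M\succ0$), not only $m=2$ and $m=5$.
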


\begin{remark*}
Note that 
$2\M_{1/2}=1+\gamma$ is possible if and only if $\cm$ is even, and $2\M_1=1+\gamma$ is possible if and only if $\cm-\sqrt{m}\geq\frac12$. 
\end{remark*}

\begin{proof}
In the first part of the proof, our strategy is to study all decompositions of the form $2\M = \gamma' + \gamma$ with $\gamma',\gamma \in \OKPlus \cup \{0\}$, ignoring the condition $\gamma' = \omega^2$.

Supposing first that $\gamma', \gamma\in\Q(\sqrt{m})$, it is easy to check that the only decompositions of $2\M$ are $0+2\M$, $\M+\M$ and possibly $1+(2\M-1)$. From them, $0$ and $1$ are squares, whereas $\M$ is a square only for $m=5$, $2\M$ only for $m=3$ (both according to Lemma~\ref{Lemma:NonsquareM}) and $2\M-1$ only for $m=2$ (Lemma~\ref{Lemma:2M-1isNonsquare}). 

The next step is to show that $2\M$ does not admit any decomposition where exactly one of the coefficients in front of $\sqrt{s}$ and $\sqrt{t}$ is nonzero. That follows quite easily since it is possible to choose an embedding where all the nonzero coefficients in front of square roots have negative sign.

Only now we start exploiting the fact that $\gamma'$ is a square: It remains to prove that $\omega^2 \preccurlyeq  2\M$ is never satisfied by an $\gamma'=\omega^2$ such that all coefficients in $\omega^2$ 
are nonzero.

Let us first focus on $\M_1$. It follows quite easily that if all four coefficients in $\omega^2$ are nonzero, then at least three coefficients in $\omega = \frac12(a+b\sqrt{m}+c\sqrt{s}+d\sqrt{t})$ are nonzero. By comparing traces of $2\M$ and $\omega^2$, we get $\frac14(a^2+b^2m+c^2s+d^2t) \leq 2\cm$, which implies
\begin{equation}\label{eq:inequa}
1+m+s\leq 8\cm.
\end{equation}
From this, we see that $1 + m + (m+1) \leq 8\cm$, which holds only for $m \leq 19$; moreover, for any fixed value of $m$, \eqref{eq:inequa} yields an upper bound for $s$ (e.g., for $m=19$ we obtain $20 + s \leq 40$, which has no relevant solutions since $s=20$ is not square-free). In this way we obtain only a few fields which have to be handled by direct computations. 

If $\M=\M_{1/2}$ and the basis is not (B4), the same technique provides a stronger result: The inequality obtained by comparing traces is in this case $1+m+s \leq 4\cmo$; however, the only $m\equiv 1 \pmod{4}$ satisfying $2+2m \leq 4\cmo$ is $m=5$.

If the basis is (B4) but, nevertheless, there are no quarter-integer coefficients in $\omega$, then the inequalities from the previous paragraph still apply. If quarter-integers are involved, we obtain a weaker inequality $\frac{1}{16}(a^2+b^2m+c^2s+d^2t) \leq \cmo$; however, in that case all of $a,b,c,d$ must be odd (in particular nonzero). Thus we get $1+m+s+t \leq 16\cmo$; this yields $1 + m + (m+4) + (m+8) \leq 16\cmo$, which is actually satisfied only for $m\leq 33$; together with the condition $m \equiv 1 \pmod{4}$ we only have to consider $m = 13$, $m=17$ and $m=21$ with $\cmo=5$ and $m=29$ and $m=33$ with $\cmo=7$ (ignoring $m=5$). However, in the former three cases, the inequality $1+m+s+t \leq 16 \cdot 5$ turns out to never hold for a field where $s\equiv t \equiv 1 \pmod{4}$; in the latter two, $1+m+s+t \leq 16 \cdot 7$ has no solution where $m=29$ or $m=33$.
\end{proof}

%----------------------------------
\subsection{Unary forms} \label{Subsec:Represent}
When looking at representability of an element by a diagonal form, it is useful to have some information on the possible representations of this element by the unary subforms. In particular, if the considered element is a rational prime $p$, we need to understand expressions of the form $p=\gamma\alpha^2$ for some $\alpha\in\OK$ and $\gamma\in\OKPlus$. But the following proposition about biquadratic divisors of rational primes can be interesting on its own.

\begin{proposition} \label{Prop:pDivSquare}
Let $p$ be a rational prime number divisible in a biquadratic field $K$ by $\alpha^2$ for an element $\alpha\in\OK\setminus\UK$. Then $p=\mu\beta^2$ for some $\mu\in\UKPlus$ and  $\beta\in\OK$.
\end{proposition}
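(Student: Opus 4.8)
The plan is to study the factorization of $p$ into prime ideals of $\OK$ and to exploit that $K/\Q$ is Galois with group $\Z/2\Z\times\Z/2\Z$. Since $p$ is a rational prime and the extension is Galois, all the primes $\mathfrak{p}_1,\dots,\mathfrak{p}_g$ of $\OK$ above $p$ share one ramification index $e$ and one residue degree $f$, with $efg=4$, and $\mathrm{Gal}(K/\Q)$ acts transitively on $\{\mathfrak{p}_1,\dots,\mathfrak{p}_g\}$. I would first translate the hypothesis into a constraint on $e$. As $\alpha^2\mid p$, every prime dividing the non-unit $\alpha$ lies above $p$, so $(\alpha)=\prod_i\mathfrak{p}_i^{a_i}$, and $(\alpha)^2\mid(p)=\prod_i\mathfrak{p}_i^{e}$ is equivalent to $2a_i\le e$ for all $i$. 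Because $\alpha\notin\UK$, some $a_j\ge1$, which forces $e\ge2$. Together with $efg=4$ this yields $e\in\{2,4\}$ (in particular $e$ is even) and $g\in\{1,2\}$.

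The key point is then that the ideal $\mathfrak{b}:=\prod_i\mathfrak{p}_i^{e/2}$ has integer exponents and satisfies $\mathfrak{b}^{2}=p\OK$; if I can show $\mathfrak{b}=(\beta)$ is principal, then $(\beta)^2=(p)$ gives $p=\mu\beta^2$ for a unit $\mu$, and positivity will conclude. So the heart of the proof is the principality of $\mathfrak{b}$. If $(\alpha)=\mathfrak{b}$ already, there is nothing to do. Otherwise, the bound $a_i\le e/2$ leaves only a few configurations. For $g=1$ (so $p\OK=\mathfrak{p}^{e}$) the sole remaining case is $e=4$, $(\alpha)=\mathfrak{p}$: then $\mathfrak{p}$ is principal and $\mathfrak{b}=\mathfrak{p}^2=(\alpha^2)$. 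For $g=2$ (which forces $e=2$, $f=1$, $p\OK=\mathfrak{p}_1^2\mathfrak{p}_2^2$) the remaining case is $(\alpha)$ equal to a single prime, say $\mathfrak{p}_1$; here I would pick $\tau\in\mathrm{Gal}(K/\Q)$ with $\tau(\mathfrak{p}_1)=\mathfrak{p}_2$, so that $\mathfrak{p}_2=(\tau(\alpha))$ is principal too and $\mathfrak{b}=\mathfrak{p}_1\mathfrak{p}_2=\bigl(\alpha\,\tau(\alpha)\bigr)$.

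I expect this principality step to be the main obstacle: the given element $\alpha$ need not itself generate $\mathfrak{b}$, and the remedy is the conjugation trick, which promotes a generator of one ramified prime to a generator of the full radical $\mathfrak{b}$ by multiplying with a Galois conjugate. This is precisely where transitivity of the Galois action, a feature of the biquadratic setting, is indispensable.

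Finally, writing $\mathfrak{b}=(\beta)$, we have $\beta\neq0$ and $(\beta)^2=\mathfrak{b}^2=(p)$, hence $p=\mu\beta^2$ with $\mu\in\UK$. Since $p$ is a positive rational we have $p\succ0$, and $\beta^2\succ0$ because $K$ is totally real and $\beta\neq0$; therefore every embedding sends $\mu=p\beta^{-2}$ to a positive real, so $\mu\in\UKPlus$. This yields the desired representation $p=\mu\beta^2$.
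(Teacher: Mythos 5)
Your proposal is correct and follows essentially the same route as the paper: the $efg$-theorem for the Galois extension $K/\Q$, the deduction $e\ge 2$ from $(\alpha)^2\mid(p)$, and the Galois-conjugation trick $\mathfrak{p}_2=(\tau(\alpha))$ to handle the case $e=g=2$ with $(\alpha)$ a single prime. Your packaging via the square root ideal $\mathfrak{b}=\prod_i\mathfrak{p}_i^{e/2}$ is a slightly cleaner organization of the same case analysis, and the final positivity argument matches the paper's.
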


\begin{proof}
\newcommand{\q}{\mathfrak{q}}
Recall that for a biquadratic number field $K$, the extension $K/\Q$ is Galois; hence, invoking the so-called $efg$-theorem, we can write
\begin{equation} \label{Eq:PrimeRam}
    p\OK=\prod_{i=1}^g{\p_i^e}
\end{equation}
for some prime ideals $\p_1, \dots, \p_g\subseteq\OK$ with $f=\left[\sfrac{\OK}{\p_i}:\sfrac{\Z}{p\Z} \right]$ and $efg=4$. Let $(\alpha^2)=\q_1^2\cdots\q_t^2$ for some prime ideals $\q_1, \dots, \q_t \subseteq\OK$ (not necessary pairwise different). By the assumption, $(\alpha^2)\mid p\OK$; therefore, $e\geq2$. Using that $efg=4$, we have only three possibilities for the values of $e, f, g$.

First, assume that $e=g=2$ and $f=1$. Then either $(\alpha^2)=\p_1^2\p_2^2=p\OK$, or $(\alpha^2)=\p_1^2$ (without loss of generality). In the first case, $p$ and $\alpha^2$ differ by a unit $\mu\in\UK$, and since both of $p$ and $\alpha^2$ are totally positive, the unit $\mu$ has to be totally positive as well. In the latter case it follows that the prime ideal $\p_1$ is principal; then $\p_2=\sigma(\p_1)$ for some $\sigma\in\mathrm{Gal}(K/\Q)$, and thus $\p_2=(\sigma(\alpha))$. Hence $p\OK=(\alpha\sigma(\alpha))^2$, and $p=\mu\alpha^2\sigma(\alpha)^2$ for some $\mu\in\UKPlus$ follows.

Second, consider the case $e=f=2$ and $g=1$. That means that $p\OK=\p^2$ for some prime ideal $\p$; thus necessarily $(\alpha^2)=\p^2$, and it follows that $p=\mu\alpha^2$ for some $\mu\in\UKPlus$.

Finally, suppose that $e=4$ and $f=g=1$, i.e., $p\OK=\p^4$. In this case either $(\alpha^2)=\p^2$, or $(\alpha^2)=\p^4$. The ideal $p\OK$ is generated by $\alpha^4$ or by $\alpha^2$, respectively, and thus $p=\mu\alpha^4$, or $p=\mu\alpha^2$, for some $\mu\in\UKPlus$. 
\end{proof}

The previous proof also provides the information that the prime $p$ is equal either to $\mu\alpha^2$, or $\mu\alpha^4$, or $\mu\alpha^2\sigma(\alpha)^2$ (where $\mu=1$ is allowed). In particular, we can deduce the following corollary.

\begin{corollary} \label{Cor:pDivSquare}
A rational prime $p$ is divisible by a nonunit square in $K$ if and only if $p=\mu\beta^2$ for $\mu\in\UKPlus$ and $\beta\in\OK$. Moreover, this can happen only if $p$ ramifies in $\OK$.
\end{corollary}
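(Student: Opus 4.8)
The plan is to read off both assertions directly from Proposition~\ref{Prop:pDivSquare} together with the explicit list of possibilities recorded immediately after its proof; no new ideas are required, only careful bookkeeping of units and ideals.

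For the forward implication of the equivalence, I would suppose that $p$ is divisible by a nonunit square, say $\alpha^2\mid p$ with $\alpha\in\OK\setminus\UK$. This is precisely the hypothesis of Proposition~\ref{Prop:pDivSquare}, which then yields $p=\mu\beta^2$ with $\mu\in\UKPlus$ and $\beta\in\OK$ with nothing further to do. For the reverse implication, I would assume $p=\mu\beta^2$ for some $\mu\in\UKPlus$ and $\beta\in\OK$, and observe that $\beta^2$ is then a square dividing $p$; the only point to verify is that $\beta$ is a nonunit. This follows from norm considerations: since $\norm{K/\Q}{p}=p^4\neq\pm1$ and $\beta^2=\mu^{-1}p$, the element $\beta^2$ cannot be a unit, hence $\beta\notin\UK$.

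For the \emph{moreover} clause, the key observation is that the relation $p=\mu\beta^2$ with $\mu$ a unit forces the ideal equality $(p)=(\beta)^2$. Writing $(\beta)=\prod_i\mathfrak{q}_i^{a_i}$ for distinct prime ideals $\mathfrak{q}_i\subseteq\OK$, I obtain $p\OK=\prod_i\mathfrak{q}_i^{2a_i}$; since $\beta$ is a nonunit, at least one exponent $a_i\geq1$, so the corresponding ramification index $2a_i\geq2$, which means exactly that $p$ ramifies in $\OK$. (Equivalently, one may simply invoke the fact established inside the proof of Proposition~\ref{Prop:pDivSquare} that its hypothesis forces $e\geq2$ in the factorization~\eqref{Eq:PrimeRam}.)

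The entire argument is a short chain of elementary deductions from the Proposition, so I do not anticipate any genuine obstacle; the only step demanding mild care is confirming that $\beta$ is a nonunit in the reverse implication, and this is immediate once the norm of $p$ is taken into account.
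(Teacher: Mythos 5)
Your proposal is correct and follows essentially the same route as the paper, which deduces the corollary directly from Proposition~\ref{Prop:pDivSquare} and the observation in its proof that the hypothesis forces $e\geq 2$ in the factorization of $p\OK$. Your extra care in checking that $\beta$ is a nonunit via norms, and your self-contained ideal-theoretic argument that $(p)=(\beta)^2$ forces an even ramification index $\geq 2$, are both valid and merely make explicit what the paper leaves implicit.
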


Recall that if a form $R$ is stronger than a form $Q$ (i.e., if $Q$ is represented by $R$), then the set of elements represented by $R$ includes all the elements which are represented by $Q$; in particular, if $Q$ is universal, then $R$ must be universal, too. When dealing with universality of diagonal forms, we are particularly interested in unary forms: For example, it is obvious that the form $\uqf1$ is stronger than the form $\uqf4$ (and thus if $Q_0\bot\uqf4$ is universal for some form $Q_0$, then $Q_0\bot\uqf1$ must be universal as well). The following proposition provides some results in this direction.

\begin{proposition}\label{Prop:StrongerForms}
Let $\gamma\in\OKPlus$.
\begin{enumerate}
    \item If $\uqf{\gamma}$ represents two elements $\alpha,\beta\in\OKPlus$, then their product $\alpha\beta$ is a square in $K$. 
    \item \label{Prop:StrongerFormsUnits} If $\uqf{\gamma}$ represents a unit $\mu$, then $\gamma$ itself has to be a totally positive unit. In particular, if $\mu=1$, then $\uqf{\gamma}\cong\uqf{1}$.
    \item \label{Prop:StrongerFormsA} If $\uqf{\gamma}$ represents $\mu\alpha^2$ with $\mu\in\UKPlus$ and $\alpha\in\OK$, then  $\uqf{\mu}$ is stronger than  $\uqf{\gamma}$.
    \item \label{Prop:StrongerFormsB} If $\uqf{\gamma}$ represents $\alpha\in\OKPlus$, then  $\uqf{\gamma}$ is stronger than $\uqf{\alpha}$.
    \item \label{Prop:StrongerFormsPrime} If  $\uqf{\gamma}$ represents a rational prime number $p$, then either $\uqf{\gamma}\cong\uqf{p}$, or there exists $\mu\in\UKPlus$ (possibly $\mu=1$) such that  $\uqf{\mu}$ is stronger than  $\uqf{\gamma}$.
\end{enumerate}
\end{proposition}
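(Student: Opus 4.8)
The plan is to reduce every assertion to the basic dictionary for unary forms. Recall that $\uqf{\delta}$ represents an element $\eta$ precisely when $\eta=\delta\xi^2$ for some $\xi\in\OK$, and that $\uqf{\mu}$ being stronger than $\uqf{\gamma}$ (i.e. $\uqf{\gamma}$ being represented by $\uqf{\mu}$) amounts, by the definition of representation specialized to a single variable, to the existence of $\lambda\in\OK$ with $\gamma=\mu\lambda^2$ --- equivalently, to $\gamma/\mu$ being a square in $\OK$. Likewise $\uqf{\gamma}\cong\uqf{\delta}$ holds exactly when $\gamma=\lambda^2\delta$ for some unit $\lambda\in\UK$. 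With this dictionary two parts are immediate: for part (1), writing $\alpha=\gamma\xi^2$ and $\beta=\gamma\eta^2$ gives $\alpha\beta=(\gamma\xi\eta)^2$; and for part (4), the very witness $\alpha=\gamma\xi^2$ of the representation shows that $\uqf{\alpha}$ is represented by $\uqf{\gamma}$ (take $\lambda=\xi$).

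For part (2) I would pass to norms. From $\mu=\gamma\xi^2$ with $\gamma\in\OKPlus$ and $\xi\neq0$ we see that $\mu$ is totally positive; as it is also a unit, $\norm{K/\Q}{\mu}=1$, while $\norm{K/\Q}{\mu}=\norm{K/\Q}{\gamma}\cdot\norm{K/\Q}{\xi}^2$ is a product of positive rational integers. Since their product is $1$, each factor equals $1$; in particular $\norm{K/\Q}{\gamma}=1$, so the totally positive $\gamma$ is a unit, $\gamma\in\UKPlus$, and $\xi\in\UK$ as well. When $\mu=1$ this forces $\gamma=\xi^{-2}$, the square of the unit $\xi^{-1}$, whence $\uqf{\gamma}\cong\uqf{1}$.

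Part (3) is where the one genuine subtlety lies. From $\mu\alpha^2=\gamma\xi^2$ (with $\alpha\neq0$, the case $\alpha=0$ being degenerate) we obtain $\gamma/\mu=(\alpha/\xi)^2$ as an identity in $K$, so $\gamma/\mu$ is a square in the field. The point is to upgrade this to a square in the ring: because $\mu$ is a unit, $\gamma/\mu=\gamma\mu^{-1}$ is again an algebraic integer, and an element of $\OK$ that is a square in $K$ is automatically a square in $\OK$ (the integral-closure fact recorded at the start of Subsection~\ref{Subsec:Squares}). Hence $\gamma=\mu\lambda^2$ for some $\lambda\in\OK$, which by the dictionary says exactly that $\uqf{\mu}$ is stronger than $\uqf{\gamma}$.

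Finally, for part (5) I would split on whether the representing element is a unit. Writing $p=\gamma\xi^2$, if $\xi\in\UK$ then $\gamma=p(\xi^{-1})^2$ gives $\uqf{\gamma}\cong\uqf{p}$ directly. If $\xi\notin\UK$, then $\xi^2$ is a nonunit square dividing $p$, so Corollary~\ref{Cor:pDivSquare} yields $p=\mu\beta^2$ with $\mu\in\UKPlus$ and $\beta\in\OK$; thus $\uqf{\gamma}$ represents the element $\mu\beta^2$, and part (3), already proved, immediately gives that $\uqf{\mu}$ is stronger than $\uqf{\gamma}$. The main obstacle is the integrality step in part (3) --- everything else is bookkeeping with the representation dictionary, norms, and the prime-divisibility corollary --- and it is essential there that $\mu$ be a unit so that $\gamma/\mu$ remains integral.
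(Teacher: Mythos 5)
Your proposal is correct and follows essentially the same route as the paper: parts (1) and (4) are the same immediate manipulations, part (2) is the paper's norm comparison, part (3) is the same integral-closure upgrade (the paper phrases it as $\gamma t^2=\mu(\alpha t/z)^2$ with $\alpha t/z\in\OK$, which for $t=1$ is exactly your $\gamma=\mu\lambda^2$), and part (5) combines Corollary~\ref{Cor:pDivSquare} with part (3) exactly as the paper does. Your explicit handling of the degenerate case $\alpha=0$ in part (3) and the unit-norm bookkeeping in part (2) are fine and consistent with the paper's (terser) argument.
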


\begin{proof}
The parts (1) and (4) are trivial.

(2) Assume $\gamma z^2=\mu$ for some $z\in\OK$; the conclusion follows readily by comparing the norms. As for the second part, if $\gamma z^2=1$, then clearly $\gamma\in\UKctv$, and thus $\uqf{\gamma}\cong\uqf{1}$.

(3) Suppose $\gamma z^2=\mu\alpha^2$ for some $z\in\OK$. For an arbitrary $t\in\OK$, we want to prove that $\gamma t^2$ is represented by $\uqf{\mu}$: The equality $\gamma z^2=\mu\alpha^2$ can be rewritten as $\gamma t^2=\mu\left(\frac{\alpha t}{z}\right)^2$, where $\frac{\alpha t}{z}\in\OK$, because obviously $\gamma t^2 \in\OK$ and the unit $\mu$ is invertible.

(5) Let $z\in\OK$ be such that $\gamma z^2=p$. If $z$ is a unit, then obviously $\uqf{\gamma}\cong\uqf{p}$. Suppose $z\notin\UK$; combining Corollary~\ref{Cor:pDivSquare} with part \eqref{Prop:StrongerFormsA} of this proposition, we conclude the proof.
\end{proof}

%----------------------------

\subsection{Direct decompositions of a quadratic lattice}

Until now we were mostly preparing tools to handle diagonal forms; nevertheless, not all forms are diagonalizable. To deal with this problem, we have to work with the underlying lattice and its bases. At several places in this subsection, we exploit the fact that $\OK$ is a Dedekind domain by using the structure theorem for finitely generated modules over a Dedekind domain, see e.g. \cite[Th.~3.31]{MilneANT}. For our purposes, the following simplified version suffices:

\begin{theorem} \label{thm:structure}
Let $M$ be a finitely generated torsion-free module over a Dedekind domain $R$. Then $M$ is isomorphic to a direct sum of finitely many projective modules of rank one (which can be identified with fractional ideals):
\[
M \simeq I_1 \oplus \cdots \oplus I_{\ell}.
\]
The number $\ell$ is uniquely given. Further, if we interpret the projective modules as fractional ideals, then $M \simeq J_1 \oplus \cdots \oplus J_{\ell}$ if and only if $I_1\cdots I_{\ell} J_1^{-1}\cdots J_{\ell}^{-1}$ is a principal fractional ideal.
\end{theorem}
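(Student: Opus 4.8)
The plan is to set $L=\operatorname{Frac}(R)$ and to organize everything around the rank $\ell=\dim_L(M\otimes_R L)$. Once this number is pinned down I would prove the three assertions separately: existence of the decomposition by induction on $\ell$, uniqueness of $\ell$ by extending scalars to $L$, and the classification up to the product ideal class by reducing to two classical facts about fractional ideals, namely the \emph{Steinitz isomorphism} $I\oplus J\cong R\oplus IJ$ and the behaviour of the top exterior power. The only structural input about $R$ that I would repeatedly invoke is that every nonzero fractional ideal of a Dedekind domain is invertible, hence projective.

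\textbf{Existence.} Since $R$ is Noetherian and $M$ is torsion-free, $M$ embeds into $V=M\otimes_R L$, a vector space of $L$-dimension $\ell$. For $\ell=1$ the image of $M$ is a finitely generated $R$-submodule of $L$, that is, a fractional ideal, and we are done. For $\ell\geq 2$ I would pick any nonzero $L$-linear functional $\phi\colon V\to L$; then $\phi(M)$ is a finitely generated nonzero $R$-submodule of $L$, hence a fractional ideal $I$, and there is a short exact sequence
\[
0\longrightarrow M\cap\Ker\phi\longrightarrow M\xrightarrow{\ \phi\ } I\longrightarrow 0.
\]
Because $I$ is invertible and therefore projective, this sequence splits, giving $M\cong(M\cap\Ker\phi)\oplus I$. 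The summand $M\cap\Ker\phi$ is finitely generated and torsion-free of rank $\ell-1$, so the induction hypothesis completes the step.

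\textbf{Uniqueness of $\ell$ and the classification.} Extending scalars to $L$ turns each $I_j$ into a one-dimensional $L$-space, so $M\otimes_R L\cong L^{\ell}$ and $\ell=\dim_L(M\otimes_R L)$ is forced, independently of the chosen decomposition. For the final assertion I would first establish the Steinitz isomorphism $I\oplus J\cong R\oplus IJ$: after rescaling $J$ inside its ideal class one may assume $I,J$ are coprime integral ideals, and then the sequence $0\to IJ\to I\oplus J\to R\to 0$, with maps $z\mapsto(z,-z)$ and $(x,y)\mapsto x+y$ and using $I\cap J=IJ$, $I+J=R$, splits since $R$ is free. Iterating this collapses $I_1\oplus\cdots\oplus I_{\ell}$ to the normal form $R^{\ell-1}\oplus(I_1\cdots I_{\ell})$. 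This gives the \uv{if} direction at once: if $I_1\cdots I_{\ell}\,J_1^{-1}\cdots J_{\ell}^{-1}$ is principal, the two product ideals are isomorphic as $R$-modules (multiplication by a generator is an isomorphism), so the two normal forms coincide. For the \uv{only if} direction I would use that the top exterior power is functorial and satisfies $\bigwedge^{\ell}\bigl(\bigoplus_j I_j\bigr)\cong I_1\cdots I_{\ell}$; an isomorphism of the modules therefore induces an isomorphism of these rank-one modules, which is precisely the statement that the quotient of the two products is a principal fractional ideal.

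\textbf{Main obstacle.} The genuinely technical point is the Steinitz isomorphism, and inside it the reduction to coprime integral ideals, which relies on the approximation and prime-avoidance properties special to Dedekind domains; the companion identity $\bigwedge^{\ell}(\bigoplus I_j)\cong\prod_j I_j$ (built from $I\otimes_R J\cong IJ$ for invertible ideals) is the other place demanding care. Everything else—the splitting of the exact sequence and the scalar extension to $L$—is formal once invertibility of fractional ideals is available.
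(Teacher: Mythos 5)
The paper does not prove this statement at all: it is quoted as a known classical result (Steinitz's structure theorem) with a citation to Milne's \emph{Algebraic Number Theory}, Theorem~3.31. Your sketch is essentially the standard textbook proof of that result, and it is correct: the induction on rank works because $\phi(M)\neq 0$ (as $M$ spans $V$ over $L$) and because invertible ideals are projective, so the sequence $0\to M\cap\Ker\phi\to M\to I\to 0$ splits; uniqueness of $\ell$ follows by flat base change to $L$; and the classification follows from the Steinitz isomorphism $I\oplus J\cong R\oplus IJ$ (via reduction to coprime integral ideals) together with the top exterior power $\bigwedge^{\ell}\bigl(\bigoplus_j I_j\bigr)\cong I_1\cdots I_{\ell}$, using that two fractional ideals are isomorphic as $R$-modules exactly when their quotient is principal. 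The points you flag as technical (moving within an ideal class to reach a coprime integral representative, and $I\otimes_R J\cong IJ$ for invertible ideals) are indeed the only places needing care, and both are standard for Dedekind domains; there is no gap in the argument.
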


Before applying this theorem, we will use some more elementary tools. Recall that vectors $\vct{x}_1, \ldots, \vct{x}_n \in \OK^n$ form a lattice basis of $\OK^n$  if $\OK^n = \OK\vct{x}_1 + \cdots + \OK\vct{x}_n$. Observe that in that case, the decomposition is a direct one.

If, for a given vector $\vct{e}\in\OK^n$, its coordinates generate a nontrivial ideal $I$ in $\OK$, then the determinant of any matrix with entries from $\OK$ containing $\vct{e}$ as one of the columns lies in $I$ and thus is not invertible. This shows that such a vector cannot be part of a lattice basis of $\OK^n$. The following lemma claims that the converse holds as well. Note that all statements in this subsection hold for \emph{any} totally real field $K$.

\begin{lemma}\label{Lemma:ComplOfBasis}
A vector $\vct{e}$ is contained in some lattice basis of $\OK^n$ if and only if its coordinates generate the trivial ideal $\OK$. 
\end{lemma}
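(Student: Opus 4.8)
The plan is to prove the nontrivial direction: assuming the coordinates of $\vct{e}=(e_1,\dots,e_n)$ generate all of $\OK$, I would construct a lattice basis of $\OK^n$ containing $\vct{e}$. The natural strategy is to use the quotient module. Consider the short exact sequence
\[
0 \to \OK\vct{e} \to \OK^n \to \OK^n/\OK\vct{e} \to 0.
\]
Since the coordinates of $\vct{e}$ generate $\OK$, the element $\vct{e}$ is primitive, so $\OK\vct{e}$ is a free rank-one direct summand candidate and the quotient $M:=\OK^n/\OK\vct{e}$ is torsion-free of rank $n-1$. First I would verify torsion-freeness: if $r\vct{x}\in\OK\vct{e}$ for some nonzero $r\in\OK$ and $\vct{x}\in\OK^n$, then $r\vct{x}=s\vct{e}$; pairing against a fixed $\OK$-linear functional $\varphi$ with $\varphi(\vct{e})=1$ (which exists precisely because the coordinates generate $\OK$) gives $s=r\varphi(\vct{x})$, whence $r\mid s$ and $\vct{x}=\varphi(\vct{x})\vct{e}\in\OK\vct{e}$, so the quotient is torsion-free.

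Next I would invoke Theorem~\ref{thm:structure}: $M$ is isomorphic to a direct sum $I_1\oplus\cdots\oplus I_{n-1}$ of fractional ideals. Because $\OK\vct{e}\cong\OK$ is free and splits off (the functional $\varphi$ provides a retraction, so the sequence above splits), we get $\OK^n\simeq \OK\oplus I_1\oplus\cdots\oplus I_{n-1}$. Comparing with the obvious decomposition $\OK^n\simeq\OK\oplus\cdots\oplus\OK$ and applying the uniqueness statement of Theorem~\ref{thm:structure}, the product $I_1\cdots I_{n-1}$ must be a principal fractional ideal. The steady aim is to convert this "Steinitz class is trivial" conclusion into an actual basis.

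The main obstacle, and the heart of the argument, is turning the abstract isomorphism $\OK^n\simeq\OK\vct{e}\oplus M$ with $M$ having trivial Steinitz class into a concrete lattice basis whose first vector is exactly $\vct{e}$. Here I would argue that since $I_1\cdots I_{n-1}$ is principal, the module $M\simeq I_1\oplus\cdots\oplus I_{n-1}$ is in fact free: a standard lemma over a Dedekind domain states that $I\oplus J\simeq\OK\oplus IJ$ for fractional ideals $I,J$, so by induction $M\simeq\OK^{\,n-2}\oplus(I_1\cdots I_{n-1})\simeq\OK^{\,n-1}$ once the product is principal. Thus the split exact sequence yields $\OK^n=\OK\vct{e}\oplus N$ with $N$ a free complement of rank $n-1$; choosing any $\OK$-basis $\vct{f}_2,\dots,\vct{f}_n$ of $N$ gives the desired lattice basis $\vct{e},\vct{f}_2,\dots,\vct{f}_n$ of $\OK^n$.

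Finally I would record the easy converse, already observed in the paragraph preceding the lemma: if $\vct{e}$ lies in a lattice basis, then $\vct{e}$ is a column of a matrix $M$ over $\OK$ with $\det M\in\UK$; expanding the determinant along that column expresses the unit $\det M$ as an $\OK$-combination of the coordinates $e_i$, so these coordinates generate $\OK$. Combining the two directions completes the proof.
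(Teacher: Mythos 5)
Your proposal is correct and follows essentially the same route as the paper: both produce a functional $\varphi$ (the paper's $\vct{d}^{\mathrm{T}}$) with $\varphi(\vct{e})=1$, use it to split off $\OK\vct{e}$ as a direct summand, and then invoke Theorem~\ref{thm:structure} to conclude that the complement is free; the converse via determinant expansion is also the paper's argument. The only difference is that you spell out the Steinitz-class bookkeeping that the paper leaves implicit in the phrase ``this follows from Theorem~\ref{thm:structure}.''
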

\begin{proof}
We have already seen a proof of one direction above the statement of the lemma. As for the other implication, let us first prove that the submodule $\OK\vct{e}$ has a complement in $\OK^n$, i.e., there is a submodule $M$ such that $\OK^n = \OK \vct{e} \oplus M$. If coordinates of $\vct{e}$ generate $\OK$, then there is some $\vct{d}\in\OK^n$ such that $\vct{d}^{\mathrm{T}}\vct{e}=1$. Using one such $\vct{d}$, we define a map $\pi: \OK^n \to \OK\vct{e}$ by putting $\pi(\vct{x}) = (\vct{d}^{\mathrm{T}}\vct{x})\vct{e}$. It is clearly a homomorphism of modules and it acts as identity on $\OK\vct{e}$, whence its image is $\OK\vct{e}$ and $\pi^2=\pi$. Therefore, we can write $\OK^n = \im{\pi} \oplus \Ker{\pi} = \OK\vct{e} \oplus \Ker{\pi}$. It remains to show that $\Ker{\pi}\simeq\OK^{n-1}$; this follows from Theorem~\ref{thm:structure}.
\end{proof}

An invertible change of variables (i.e., a substitution) of an $n$-ary form corresponds to choosing a different basis of the underlying lattice $\OK^n$. Bearing this in mind, we can use the above stated lemma to deduce the following corollary. Especially its second part is almost folklore, but it is difficult to find a reference.

\begin{corollary} \label{Cor:Stepeni}
Let $Q$ be an $n$-ary quadratic form.
 \begin{enumerate}
    \item \label{Cor:Stepeni1} 
    If $Q$ represents $\alpha\in\OK$ in such a way that $\alpha=Q(\vct{e})$ where coordinates of $\vct{e}$ generate the trivial ideal $\OK$, then there exists an $n$-ary quadratic form $Q'$ equivalent to $Q$ such that $Q'(y_1,y_2,\ldots,y_n)=\sum_{1\leq i\leq j\leq n} b_{ij}y_iy_j$ with $b_{11}=\alpha$.
     \item \label{Cor:Stepeni2} If $Q$ represents a unit $\mu\in\UKPlus$ (for example $1$), then $Q \cong \uqf{\mu} \bot Q_0$ for some $(n-1)$-ary subform $Q_0$. 
 \end{enumerate}
\end{corollary}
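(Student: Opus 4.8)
The plan is to prove both parts of Corollary~\ref{Cor:Stepeni} as direct applications of Lemma~\ref{Lemma:ComplOfBasis}, since all the real work has already been done there. The key observation is that an invertible change of variables of $Q$ corresponds exactly to a change of lattice basis of $\OK^n$, and Lemma~\ref{Lemma:ComplOfBasis} tells us precisely which vectors can be completed to such a basis.

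For part \eqref{Cor:Stepeni1}, first I would observe that by hypothesis the coordinates of $\vct{e}$ generate the trivial ideal $\OK$. By Lemma~\ref{Lemma:ComplOfBasis}, there exist vectors $\vct{v}_2, \dots, \vct{v}_n \in \OK^n$ such that $(\vct{e}, \vct{v}_2, \dots, \vct{v}_n)$ is a lattice basis of $\OK^n$. Let $M$ be the matrix whose columns are these basis vectors; then $M$ has entries in $\OK$ and $\det M \in \UK$, so $M$ defines an equivalence. Setting $Q'(\vct{y}) = Q(M\vct{y})$, I would compute the coefficient of $y_1^2$, which is exactly $Q(M\e_1) = Q(\vct{e}) = \alpha$ where $\e_1$ is the first standard basis vector. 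Hence $b_{11} = \alpha$, as required.

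For part \eqref{Cor:Stepeni2}, suppose $Q$ represents a totally positive unit $\mu$, say $\mu = Q(\vct{e})$. Since $\mu$ is a unit, the ideal generated by the coordinates of $\vct{e}$ contains $\mu$ (it contains $B_Q(\vct{e},\vct{e})$-type combinations, but more directly: if all coordinates of $\vct{e}$ lay in a proper ideal $I$, then $Q(\vct{e})$ would lie in $I^2 \subsetneq \OK$, contradicting that $Q(\vct{e}) = \mu$ is a unit). Thus the coordinates generate $\OK$, and part \eqref{Cor:Stepeni1} applies to give an equivalent form $Q'$ with $b_{11} = \mu$. The final step is the orthogonalization: writing $Q'(\vct{y}) = \mu y_1^2 + 2 y_1 \ell(y_2,\dots,y_n) + Q_1(y_2,\dots,y_n)$ for a linear form $\ell$ with coefficients in $\OK$, I would complete the square. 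Because $\mu \in \UKPlus$ is invertible in $\OK$, the substitution $y_1 \mapsto y_1 - \mu^{-1}\ell(y_2,\dots,y_n)$ is an invertible $\OK$-linear change of variables that eliminates the cross terms, yielding $Q \cong \uqf{\mu}\bot Q_0$ where $Q_0 = Q_1 - \mu^{-1}\ell^2$ is an $(n-1)$-ary subform with coefficients in $\OK$.

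The main obstacle, and the only point requiring genuine care, is verifying that the completion of the square in part \eqref{Cor:Stepeni2} stays within $\OK$ and preserves the classical (integral Gram matrix) property. Completing the square over a general ring can introduce denominators, but here invertibility of $\mu$ in $\OK$ guarantees $\mu^{-1} \in \OK$, so no denominators appear and $Q_0$ indeed has entries in $\OK$; the cross term $2 y_1 \ell$ being classical (coefficients in $2\OK$) ensures the off-diagonal coefficients of $\ell$ are even, which keeps $Q_0$ classical as well. Everything else reduces to bookkeeping already licensed by the structure theorem (Theorem~\ref{thm:structure}) invoked inside Lemma~\ref{Lemma:ComplOfBasis}.
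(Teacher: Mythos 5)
Your proposal is correct and follows essentially the same route as the paper: both parts rest on Lemma~\ref{Lemma:ComplOfBasis} to extend $\vct{e}$ to a lattice basis, and part \eqref{Cor:Stepeni2} is finished by the same single completion-of-the-square step (the paper phrases it as replacing $\vct{x_i}$ by $\vct{z_i}=\vct{x_i}-\mu^{-1}\frac{b_{1i}}{2}\vct{e}$, which is exactly your substitution $y_1\mapsto y_1-\mu^{-1}\ell$), using invertibility of $\mu$ to avoid denominators. Your explicit checks that the coordinates of $\vct{e}$ generate $\OK$ and that $Q_0$ remains classical are slightly more detailed than the paper's, but the argument is the same.
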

\begin{proof}
To prove (1), use Lemma~\ref{Lemma:ComplOfBasis} and consider a basis $(\vct{e}, \vct{x_2}, \ldots, \vct{x_n})$ of the lattice $\OK^n$. Then $Q(y_1\vct{e}+\ldots+y_n\vct{x_n})$ has the desired form, since the coefficient in front of $y_1^2$ is $Q(\vct{e})=\alpha$.

The statement (2) can be deduced as follows: First observe that if $Q(\vct{e})=\mu$, then the coordinates of $\vct{e}$ generate the ideal $(\mu)=\OK$, so the first part of the statement can be applied. Thus we transform the form so that the first diagonal coefficient is $\mu$.

To erase the non-diagonal coefficients for the first variable, we need another basis transformation (which corresponds to one step of Gram--Schmidt orthogonalization): We replace the basis $(\vct{e},\vct{x_2},\ldots,\vct{x_n})$ by $(\vct{e},\vct{z_2},\ldots,\vct{z_n})$ where $\vct{z_i} := \vct{x_i} -\mu^{-1}\frac{b_{1i}}{2}\vct{e}$; indeed, for such a basis we compute \[
B_Q(\vct{e},\vct{z_i})= B_Q(\vct{e},\vct{x_i}) -\mu^{-1}\frac{b_{1i}}{2}B_Q(\vct{e},\vct{e}) = \frac{b_{1i}}{2} -\mu^{-1}\frac{b_{1i}}{2}\mu = 0. \qedhere
\]
\end{proof}

Let us focus on the $2$-dimensional case. For a vector $\vct{v}\in K^2$ we denote by $\indx{v}$ the \emph{index} of $\vct{v}$ (sometimes also called the \emph{coefficient}), i.e., 
\[ \indx{v}=\left\{\eta\in K \mid \eta\vct{v}\in\OKdva  \right\};\]
note that it is a fractional $\OK$-ideal, and $\indx{v}\vct{v}=K\vct{v}\cap\OKdva$. 

\begin{lemma} 
Let $\vct{v}\in K^2$, $\vct{v}=\begin{psmallmatrix}v_1\\ v_2\end{psmallmatrix}$. Then $\indx{v}=(v_1, v_2)^{-1}$.
\end{lemma}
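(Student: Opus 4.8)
The plan is to show that both $\indx{v}$ and $(v_1,v_2)^{-1}$ coincide with the same explicitly described subset of $K$, namely
$\{\eta\in K : \eta v_1\in\OK \text{ and } \eta v_2\in\OK\}$.
I assume $\vct{v}\neq\vct{0}$, so that $(v_1,v_2)$ is a nonzero fractional ideal and its inverse makes sense; the degenerate case $\vct{v}=\vct{0}$ does not occur for the vectors to which we apply this.

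First I would handle the left-hand side directly from the definition. We have $\indx{v}=\{\eta\in K : \eta\vct{v}\in\OKdva\}$, and since $\eta\vct{v}=\begin{psmallmatrix}\eta v_1\\ \eta v_2\end{psmallmatrix}$, the membership $\eta\vct{v}\in\OKdva$ is equivalent to the pair of conditions $\eta v_1\in\OK$ and $\eta v_2\in\OK$. Thus $\indx{v}$ is exactly the set described above, with no further work needed.

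Next I would treat the right-hand side. Because $\OK$ is a Dedekind domain, every nonzero fractional ideal is invertible and its inverse admits the \emph{colon} description $(v_1,v_2)^{-1}=\{\eta\in K : \eta(v_1,v_2)\subseteq\OK\}$. Since $(v_1,v_2)=\OK v_1+\OK v_2$, the containment $\eta(v_1,v_2)\subseteq\OK$ holds if and only if $\eta v_1\in\OK$ and $\eta v_2\in\OK$: the \uv{only if} follows by evaluating on the two generators $v_1,v_2$, and the \uv{if} follows by $\OK$-linearity, as every element of $(v_1,v_2)$ has the form $av_1+bv_2$ with $a,b\in\OK$. Hence $(v_1,v_2)^{-1}$ equals the same elementary set, and comparing with the previous paragraph gives $\indx{v}=(v_1,v_2)^{-1}$.

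The only point requiring care -- and it is hardly an obstacle -- is the characterization of the inverse ideal as the colon set $\{\eta : \eta(v_1,v_2)\subseteq\OK\}$; this is precisely where the Dedekind property of $\OK$ enters, guaranteeing that this set is genuinely the inverse (i.e.\ that $(v_1,v_2)\cdot\{\eta:\eta(v_1,v_2)\subseteq\OK\}=\OK$) rather than merely some fractional ideal related to it. Everything else is a direct translation of the two definitions into one and the same condition on $\eta$.
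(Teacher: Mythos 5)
Your proof is correct, and it rests on the same underlying fact as the paper's: the description of $(v_1,v_2)^{-1}$ as the colon set $\{\eta\in K : \eta(v_1,v_2)\subseteq\OK\}$, valid because $\OK$ is Dedekind. The only difference is organizational -- you identify both $\indx{v}$ and $(v_1,v_2)^{-1}$ with the common set $\{\eta : \eta v_1,\eta v_2\in\OK\}$ directly, whereas the paper derives the two inclusions from the equivalence $J\supseteq(v_1,v_2)\Longleftrightarrow J^{-1}\subseteq\indx{v}$ specialized to $J=(v_1,v_2)$ and $J=\indx{v}^{-1}$; your explicit remark that $\vct{v}\neq\vct{0}$ is needed for invertibility is a sensible precaution the paper leaves implicit.
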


\begin{proof}
Consider a fractional $\OK$-ideal $J$; we have the following chain of equivalences: 
\begin{multline*}
    J\supseteq (v_1,v_2)
    \Longleftrightarrow
    J\supseteq(v_1) \ \&\  J\supseteq(v_2)
    \Longleftrightarrow
    J^{-1}(v_1)\subseteq\OK \ \&\  J^{-1}(v_2)\subseteq\OK 
    \\
    \Longleftrightarrow
    J^{-1}\begin{psmallmatrix}v_1\\v_2\end{psmallmatrix}\subseteq\OKdva
    \Longleftrightarrow
    J^{-1}\subseteq\indx{v}
\end{multline*}
The choice $J=(v_1,v_2)$ obviously satisfies $J\supseteq(v_1,v_2)$; thus, $(v_1,v_2)^{-1}\subseteq \indx{v}$. On the other hand, setting $J=\indx{v}^{-1}$ and using the other direction, we obtain $(v_1,v_2)^{-1}\supseteq \indx{v}$.
\end{proof}

\begin{lemma}\label{Lemma:LatticeDecomp}
Let $\e$ be any nonzero vector from $\OKdva$. Then there exists some $\f\in K^2$ such that $\OKdva=\indx{\e}\e\oplus \indx{\f}\f$, Moreover, $\f$ can be chosen in such a way that $\indx{\f}=\indx{\e}^{-1}$. That is, if $\e=\begin{psmallmatrix}e_1\\ e_2\end{psmallmatrix}$, then $\indx{\f}=(e_1,e_2)$.
\end{lemma}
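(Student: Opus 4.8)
The plan is to build the complementary summand abstractly with the structure theorem and then rescale it to fix the index on the nose. Write $L := \indx{\e}\e = K\e\cap\OKdva$. First I would check that $L$ is saturated in $\OKdva$: if $r\vct{x}\in L$ for some nonzero $r\in\OK$ and some $\vct{x}\in\OKdva$, then $\vct{x}\in K\e$, so $\vct{x}\in K\e\cap\OKdva = L$. Consequently the quotient $\OKdva/L$ is torsion-free, and being finitely generated over the Dedekind domain $\OK$, it is projective (of rank $1$) by Theorem~\ref{thm:structure}. Hence the surjection $\OKdva \to \OKdva/L$ splits, producing a complement $M$ with $\OKdva = L \oplus M$.

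Next I would exhibit $M$ in the required form. As a direct summand of the torsion-free module $\OKdva$, the rank-one module $M$ is again saturated, so for any nonzero $\vct{g}\in M$ we have $K\vct{g}=KM$ and therefore $M = KM\cap\OKdva = K\vct{g}\cap\OKdva = \indx{g}\vct{g}$, using the identity $\indx{v}\vct{v}=K\vct{v}\cap\OKdva$ recorded just before the lemma. Thus $M$ already has the shape $\indx{g}\vct{g}$; what remains is that its index $\indx{g}$ is, at this stage, controlled only up to ideal class.

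To pin down that class, I would use the $\OK$-module isomorphisms $L\cong\indx{\e}$ and $M\cong\indx{g}$ given by multiplication by $\e$ and by $\vct{g}$ (injective since these vectors are nonzero). Then $\OK\oplus\OK = \OKdva \cong \indx{\e}\oplus\indx{g}$, so the final clause of Theorem~\ref{thm:structure} forces $\indx{\e}\cdot\indx{g}$ to be principal, say $\indx{\e}\,\indx{g}=(\kappa)$ with $\kappa\in K^\times$. The last step is a rescaling: set $\f := \kappa\vct{g}$. Scaling the vector leaves the lattice unchanged, $\indx{\f}\f = \indx{g}\vct{g} = M$, while the index transforms homogeneously, $\indx{\f} = \kappa^{-1}\indx{g} = \kappa^{-1}(\kappa)\,\indx{\e}^{-1} = \indx{\e}^{-1}$. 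Since $\indx{\e} = (e_1,e_2)^{-1}$ by the preceding lemma, this gives $\indx{\f}=(e_1,e_2)$ and $\OKdva = \indx{\e}\e\oplus\indx{\f}\f$, as claimed.

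I expect the only genuinely delicate point to be the passage from \emph{a complement exists in the correct ideal class} to \emph{a complement with index exactly $\indx{\e}^{-1}$}: the splitting argument determines $\indx{g}$ only up to a principal factor, and it is precisely the invariance of $\indx{v}\vct{v}$ under rescaling $\vct{v}$ that lets me absorb that factor into the choice of $\f$. The saturation and torsion-freeness observations, and the splitting of the sequence, are routine once Theorem~\ref{thm:structure} is invoked.
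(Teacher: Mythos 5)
Your proposal is correct and follows essentially the same route as the paper: obtain the complement by noting that $\sfrac{\OKdva}{\indx{\e}\e}$ is finitely generated torsion-free, hence projective over the Dedekind domain $\OK$, identify the complement as $\indx{\vct{g}}\vct{g}$ via saturation, invoke the uniqueness clause of Theorem~\ref{thm:structure} to see that $\indx{\e}\indx{\vct{g}}$ is principal, and rescale $\vct{g}$ by a generator to normalize the index. The saturation details you spell out are exactly the "simple linear algebraic considerations" the paper leaves implicit.
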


\begin{proof}
Consider the $\OK$-module $\sfrac{\OKdva}{\indx{\e}\e}$. As a factor of a finitely generated module, it is finitely generated; it is simple to check that it is torsion-free as well. Over a Dedekind domain, a module with these two properties is projective (this is a corollary of Theorem~\ref{thm:structure}). Using a well-known property of projective modules, there is a submodule $N$ of $\OKdva$ such that $\OKdva = \indx{\e}\e \oplus N$.

The next step is to observe that $N$ must be a module of the form $K\vct{g} \cap \OKdva$ for some $\vct{g}\in\OKdva$; this follows from simple linear algebraic considerations ($\vct{g}$ can be chosen as any nonzero element of $N$).

So far we know $N=K\vct{g} \cap \OKdva=\indx{\vct{g}}\vct{g}$ for some $\vct{g}$ which even belongs to $\OK^2$; however, we have not yet ensured that $\indx{\vct{g}}=\indx{\e}^{-1}$. We have $\OKdva = \indx{\e}\e \oplus \indx{\vct{g}}\vct{g}$. Since $\OKdva$ can also be written as $\OK \vct{x}_1 \oplus \OK \vct{x}_2$ where $\vct{x}_1,\vct{x}_2$ is the standard basis, we have two decompositions of a module into projective modules of rank 1; in such a situation, Theorem~\ref{thm:structure} tells us that the fractional ideals $\OK\OK$ and $\indx{\e}\indx{\vct{g}}$ are the same up to multiplication by a principal fractional ideal, say $\OK\OK = (\delta)\indx{\e}\indx{\vct{g}}$. Then we can define a new vector $\f = \frac{1}{\delta}\vct{g}$ to obtain $\indx{\e}\indx{\f}=\OK$.
\end{proof}

%----------------------------------------------------------------------------------------------
\section{\texorpdfstring{\protect\hypertarget{Sec:NonspecialCases}{Proof in the main cases}}{Proof in the main cases}} \label{Sec:NonspecialCases}

In this section, we follow the proof strategy outlined in Section~\ref{Sec:ProofIdea}. Recall that we assume $Q \cong \uqf1\bot Q_0$ to be a universal classical ternary quadratic form and that we distinguish the following three cases:
\begin{enumerate}[(I)]
    \item $\left|\sfrac{\UKPlus}{\UKctv}\right|>2$; we have solved this case immediately in Section~\ref{Sec:ProofIdea}. \label{CaseI}
    \item $\left|\sfrac{\UKPlus}{\UKctv}\right|=2$ with a system of representatives $\{1, \ve\}$, \label{CaseII}
    \item $\left|\sfrac{\UKPlus}{\UKctv}\right|=1$, where the only relevant unit is $1$. \label{CaseIII}
\end{enumerate}

Cases \ref{CaseII} and \ref{CaseIII} are further divided into subsections according to the whether $2\ve$ is a square, resp.\ according to the diagonalizability of $Q_0$; see Appendix~\ref{App} for a schematic structure of the proof. Moreover, every subsection requires its own  additional conditions on $K$; these are always listed at the very beginning of the subsection and from the proof it will be evident why they are needed.  All the omitted cases will be covered in Section~\ref{Sec:Special cases}.

\subsection{\texorpdfstring{\protect\hypertarget{Subsec:CaseI}{Case \ref{CaseI}}}{Case (I)}}\label{Subsec:CaseI}

This case has been solved in Section~\ref{Sec:ProofIdea}, but there is actually more to be said. In particular, this part covers two different types of $\sfrac{\UKPlus}{\UKctv}$, namely $\left|\sfrac{\UKPlus}{\UKctv}\right|=4$ and $\left|\sfrac{\UKPlus}{\UKctv}\right|=8$. In the former case, as we have outlined in Section~\ref{Sec:IdeaOfProof}, four units of $\sfrac{\UKPlus}{\UKctv}$ suffice to prove that every universal form over $\OK$ must have at least four variables, which implies that no ternary form is universal. In the latter, having eight such units, we can claim an even stronger statement -- in these fields, any universal form needs at least eight variables. In the following example, we present some biquadratic fields with this property.

\begin{example}
Considering possible systems of fundamental units in $K$, see \cite{MU}, a sufficient condition for $\left|\sfrac{\UKPlus}{\UKctv}\right|=8$ to happen is that the fundamental units $\varepsilon_m, \varepsilon_s$ and $\varepsilon_t$ of quadratic subfields are totally positive and nonsquare in $K$ and none of the products $\varepsilon_m\varepsilon_s$, $\varepsilon_m\varepsilon_t$, $\varepsilon_s\varepsilon_t$ and $\varepsilon_m\varepsilon_s\varepsilon_t$ is a square in $K$. For example, these conditions are satisfied in the fields $\BQ{3}{910}$, $\BQ{21}{55}$ and $\BQ{21}{110}$.
\end{example}

\subsection{\texorpdfstring{\protect\hypertarget{Subsec:CaseIInonsquare}{Case \ref{CaseII} with $2\ve$ not being a square }}{Case (II) with 2e not being a square}}\label{Subsec:CaseIInonsquare}

Throughout this subsection suppose $\sqrt2\notin K$. In the following, we shall repeatedly exploit the first part of Proposition~\ref{Prop:StrongerForms}: The product of two numbers represented by the same unary form is a square.

We assume the ternary quadratic form $Q$ to be universal, hence it must represent both $1$ and $\ve$. Using the second part of Corollary~\ref{Cor:Stepeni} twice, we end up with a diagonal form $Q\cong\tqf{1}{\ve}{ \gamma}$ (with $\gamma\in\OKPlus$ not yet specified). This form is supposed to be universal; hence, it represents $2$. Noting that $\uqf{\ve}$ represents neither $1$ nor $2$ and using Lemma~\ref{Lemma:TrivDecomp}, we see that $2$ has to be represented by $\bqf{1}{ \gamma}$. Using the same lemma again, we find out that $\uqf{\gamma}$ represents either $1$ or $2$, since $\uqf{1}$ does not represent $2$ (because \conds{$\sqrt2\notin K$}).

Similarly, we can consider the representation of $2\ve$. Observe that $2\ve$ can be decomposed only as $0+2\ve$ or $\ve+\ve$; otherwise multiplication by $\ve^{-1}\in\OKPlus$ would give a nontrivial decomposition of $2$, contradicting Lemma~\ref{Lemma:TrivDecomp}. Since, just as before, $\uqf{1}$ represents neither $\ve$ nor $2\ve$, the form $\bqf{\ve}{\gamma}$ represents $2\ve$. Further, $\uqf{\ve}$ does not represent $2\ve$ because if \conds{$2$ is not a square}, then $2\ve^2$ is not a square either. Therefore, $\uqf{\gamma}$ has to represent $\ve$ or $2\ve$.

We have deduced that $\uqf{\gamma}$ represents either $1$ or $2$ and also either $\ve$ or $2\ve$; however, this is impossible, since none of the four products $1\cdot \ve$, $1 \cdot 2\ve$, $2 \cdot \ve$ and $2 \cdot 2\ve$ is a square. Thus, $Q$ cannot be universal.

\begin{conclusion}\label{Concl:CaseIInotsquare}
Assuming that $\ve$ is a nonsquare totally positive unit and that $2\ve$ is not a square, and further requiring $\sqrt{2}\notin K$, we have seen that there does not exist any ternary quadratic form which simultaneously represents $1, \ve, 2, 2\ve$.
\end{conclusion}

Note that the proof used neither the condition $\left|\sfrac{\UKPlus}{\UKctv}\right|=2$ nor the fact that $K$ is a biquadratic field: We only needed to know that in $K$, all units are indecomposable and $2$ decomposes only trivially as $1+1$ or $0+2$ (both of this holds in any totally real field, see the unnumbered remark below Lemma~\ref{Lemma:TrivDecomp}), that $\sqrt2 \notin K$ and that $K$ contains a totally positive unit $\ve$ such that $2\ve$ is not a square. Thus, we have actually proved the more general Theorem~\ref{Thm:General}.

\subsection{\texorpdfstring{\protect\hypertarget{Subsec:CaseIIsquare}{Case \ref{CaseII} with $2\ve$ being a square}}{Case (II) with 2e being a square}}\label{Subsec:CaseIIsquare}

Observe that in this case   $m\neq2$ (i.e., $\sqrt2\notin K$): If both $2$ and $2\ve$ were squares, then so would be $\ve$. 
Furthermore, suppose $K\neq \BQ35$.

Just as in the previous subsection, the only potentially universal form is $\tqf{1}{\ve}{\gamma}$ for some $\gamma\in\OKPlus$. Nevertheless, in this case the subform $\bqf{1}{\ve}$ represents all the numbers $2$, $3$,  $4$. One natural way to go would be to consider representations of $5$ and $5\ve$. However, it turns out to be easier to use the elements $\M$ and $\ve\M$, or, in the cases with $m\in\{3, 5\}$, the elements $\S$ and $\ve\S$. 

\hypertarget{CaseIIsquare-not35}{First assume $m\neq3,5$.} Note that both  $\M$ and $\ve\M$ are indecomposable: For $\M$ this is stated in Proposition~\ref{Prop:IndecM}, and any nontrivial decomposition of $\ve\M$ would yield a decomposition of $\M$. Furthermore, if $\ve\M$ was a square, then so would $2\M$; thus, by Corollary~\ref{Cor:NonsquareM}, neither $\M$ nor $\ve\M$ is a square \conds{since $m\neq 3,5$}. This can be rephrased as \uv{$\M$ is represented by neither of the unary forms $\uqf{1}$, $\uqf{\ve}$}. It is easy to see that the same holds for $\ve\M$ as well.

Suppose now that $\M$ is represented by $\tqf{1}{\ve}{\gamma}$ for some $\gamma\in\OKPlus$. Due to its indecomposability, it has to be represented by one of the unary subforms; from the previous paragraph, we know that it must be $\uqf{\gamma}$. The same argument holds for $\ve\M$. But if the unary form $\uqf{\gamma}$ represents both $\M$ and $\ve\M$, it means that their product, $\ve\M^2$, is a square. Thus $\ve$ is a square, which is a contradiction.

\begin{remark} 
The equality $\ve=\M$ might hold for some integers $m$, in which case $\M$ would be already represented by the binary form $\bqf1\ve$. However, we do not have to handle this separately, since it would mean that $\ve\M$ is a square, which we have already discussed and rejected.
\end{remark}

\hypertarget{CaseIIsquare-35}{Now let $m\in\{3, 5\}$;} for $K \neq \BQ35$, neither $\S$ nor $2\S$ is a square by Lemma~\ref{Lemma:NonsquareS}, and indecomposability of $\S$ follows from Proposition~\ref{Prop:IndecS}. Otherwise, we can use exactly the same arguments as above, with $\M$ replaced by $\S$.

Let us summarize the above discussion.

\begin{conclusion} \label{Concl:CaseIIsquare}
Suppose $\ve$ is a nonsquare totally positive unit such that $2\ve$ is a square. Except possibly for the field $K=\BQ35$, there does not exist any universal ternary quadratic form.

In particular, if $m\neq3,5$, then no ternary quadratic form is able to simultaneously represent $1$, $\ve$, $\M$ and $\ve\M$, and if $m\in\{3, 5\}$ but $s\neq5$,  then no ternary quadratic form is able to simultaneously represent $1$, $\ve$, $\S$ and $\ve\S$.
\end{conclusion}

If we put the previous two conclusions together, we obtain the following:

\begin{conclusion}[Case (II)]\label{Concl:epsilon}
All in all: If the field $K$ contains a nonsquare totally positive unit $\ve$ and if $m\neq2$ and $K\neq\BQ{3}{5}$, there is no universal ternary form over $K$.  
\end{conclusion}

\subsection{\texorpdfstring{\protect\hypertarget{Subsec:CaseIIIdiag}{Case \ref{CaseIII} with $Q_0$ diagonalizable}}{Case (III) with Q0 diagonalizable}}\label{Subsec:CaseIIIdiag}

For this subsection, let $\sqrt2, \sqrt3, \sqrt5\notin K$.

In this case, the form $\uqf{1}$ already represents all totally positive units; nonetheless, we assume the binary quadratic subform $Q_0$ to be diagonalizable. Thus, we have a universal ternary quadratic form $Q\cong\tqf{1}{\beta}{\gamma}$ with unspecified $\beta, \gamma \in \OKPlus$. Note that if both $\beta$ and $\gamma$ are rational integers, then the quadratic form $Q$ cannot be universal by the result of Siegel \cite{Si} \conds{invoking $\sqrt5\notin K$}. (Alternatively, to present a self-contained proof, we can observe that $\uqf{n}$ cannot represent $\M$ since $\M$ is  not a square and $\frac{\M}{n}$ for $n\geq2$ is never in $\OK$.)

The form $Q$ has to represent $2$; using Lemma~\ref{Lemma:TrivDecomp}, we can see that $\bqf{\beta}{\gamma}$ has to represent either $1$ or $2$.

\hypertarget{CaseIIIdiag-1}{First, assume that $\bqf{\beta}{\gamma}$ represents $1$}; we can put $\beta=1$ by Proposition~\ref{Prop:StrongerForms}\eqref{Prop:StrongerFormsUnits}. Since the form $\tqf{1}{1}{\gamma}$ has to represent $3$, by invoking Lemma~\ref{Lemma:TrivDecomp} again we get that $\uqf{\gamma}$ represents $1, 2$ or $3$. Using parts \eqref{Prop:StrongerFormsUnits} and \eqref{Prop:StrongerFormsPrime} of Proposition~\ref{Prop:StrongerForms}, we get that $\gamma\in\{1,2,3\}$, as $1$ is the only totally positive unit up to multiplication by a square; but a quadratic form with rational integer coefficients cannot be universal by the above mentioned argument.

\hypertarget{CaseIIIdiag-2}{Second, let $\bqf{\beta}{\gamma}$ represent $2$}; by Lemma~\ref{Lemma:TrivDecomp}, we have two possibilities for the additive decomposition of $2$: either $2=1+1$ or $2=2+0$. The first leads to the nonuniversal quadratic form $\tqf111$; the latter together with part \eqref{Prop:StrongerFormsPrime} of Proposition~\ref{Prop:StrongerForms} yields the quadratic form $\tqf12\gamma$. This form has to represent $5$; note that all the possible summands can be obtained by considering the inequality $5\succcurlyeq\omega^2$ for $\omega\in\OK$ (because if $5\succcurlyeq2\omega^2$, then $5\succcurlyeq\omega^2$) -- thus, using Remark~\ref{Rem:TrivDecompSquares}  \conds{together with the assumption $\sqrt2, \sqrt3, \sqrt5\notin K$},  we get that $\uqf{\gamma}$ has to represent at least one of the numbers $1, 2, 3, 4, 5$. Invoking Proposition~\ref{Prop:StrongerForms}, each of these possibilities leads to a quadratic form with integral coefficients: More specifically, we use part \eqref{Prop:StrongerFormsUnits} of this proposition to handle the case when $\uqf\gamma$ represents $1$, part \eqref{Prop:StrongerFormsPrime} for the primes $2,3,5$, and finally part \eqref{Prop:StrongerFormsA} for $4$. Thus, none of the possibilities for $\uqf\gamma$ leads to a universal form.

\begin{conclusion}\label{Concl:CaseIIIdiag}
If $\UKPlus=\UKctv$ holds and $\sqrt2, \sqrt3, \sqrt5\notin K$, then there is no diagonalizable universal ternary form over the biquadratic field $K$. 

In particular, we use the representations of $1,2,3$ and $5$ to transform each of the potential diagonalizable universal quadratic forms to a form with integral coefficients. Such a form cannot be universal by the result of Siegel \cite{Si}, and also because it does not represent $\M$.
\end{conclusion}

\subsection{Case \ref{CaseIII} with \texorpdfstring{$Q_0$}{Q0} nondiagonalizable} \label{Subsec:CaseIIInondiag}

This subsection requires the conditions $\splitatcommas{\sqrt2,\sqrt3,\sqrt5,\sqrt6,\sqrt7,\sqrt{10},\sqrt{13},\sqrt{17},\sqrt{21}\notin K}$ and $m\neq33, 65, 85$.

In this case we assume that the binary subform $Q_0$ is not diagonalizable. In particular, it does not represent $1$ (because if it did, we could use part \eqref{Cor:Stepeni2} of Corollary~\ref{Cor:Stepeni} to obtain a diagonal form). Since $Q\cong\uqf{1}\bot Q_0$ represents $2$ and there are only trivial additive decompositions of $2$ by Lemma~\ref{Lemma:TrivDecomp}, \conds{$\sqrt2 \notin K$} implies that $2$ has to be represented by $Q_0$. Denote $\vct{e}=\begin{psmallmatrix}e_1\\ e_2\end{psmallmatrix}$ a vector from $\OKdva$ such that $Q_0(\vct{e})=2$. To put $2$ as one of the diagonal coefficients of the form $Q_0$, we would need to extend the vector $\vct{e}$ to a basis of $\OKdva$. Referring to Lemma~\ref{Lemma:ComplOfBasis}, we can see that this is possible only if the ideal $(e_1, e_2)$ is equal to $\OK$, but that may not always be the case: In general,  $(e_1,e_2)$ is an ideal the square of which divides $(2)$, because $2=Q_0(\vct{e})$ means that $2$ is a combination of $e_1^2$, $e_1e_2$ and $e_2^2$.

Thanks to Lemma~\ref{Lemma:LatticeDecomp}, we can at least find a vector $\vct{f}=\bigl(\begin{smallmatrix}f_1\\ f_2\end{smallmatrix}\bigr)$ from $K^2$ which forms together with $\vct{e}$ a \uv{generalized basis} of $\OKdva$ in the sense that $\OKdva=\indx{e}\e \oplus \indx{f}\f$; recall that $\vct{f}$ is usually not an element of $\OKdva$ and it is chosen so that $\indx{e}\indx{f}=\OK$. We have $\indx{f}=(e_1,e_2)$ and $\indx{e}=(f_1,f_2)$; note that it follows from the discussion above that $\indx{f}^2$ divides $(2)$.

Denote $Q_0(\vct{f})=\gamma$ and $B_{Q_0}(\vct{e},\vct{f})=\beta$, 
where $B_{Q_0}$ is the polar form of the quadratic form $Q_0$.  Thus, taking a general vector $y\e+z\f\in\OKdva$ with $y\in\indx{e}$ and $z\in\indx{f}$, we have 
\begin{equation}\label{Eq:Q0}
Q_0(y\vct{e}+z\vct{f})=2y^2+2\beta yz+\gamma z^2.
\end{equation}
 Multiplying by $2$ and completing the square, we get
\begin{equation}\label{Eq:2Q0}
2Q_0(y\vct{e}+z\vct{f})=(2y+\beta z)^2 +(2\gamma-\beta^2)z^2.
\end{equation}
The total positive definiteness of $Q_0$ implies $2\gamma-\beta^2\succ0$. Note that it is not clear  whether $\beta$ and $\gamma$ belong to $\OK$. As it turns out, for $\gamma$ it is not always the case; see Appendix~\ref{App:sqrt10} for an example. This was overlooked in \cite{CKR}; however, for both their and our proof, the following lemma is sufficient.

\begin{lemma}\label{Lemma:Integrity} 
For any $y\in\indx{e}$ and $z\in\indx{f}$, we have:
\begin{enumerate}
    \item $2\gamma\in\OK$, \label{Lemma:Int1}
    \item $2\gamma-\beta^2\in\OKPlus$, \label{Lemma:Int2}
    \item $\beta\in\OK$, \label{Lemma:Int3}
    \item $2y\in\indx{f}\subseteq\OK$,\label{Lemma:Int4}
    \item $2y+\beta z\in\indx{f}\subseteq\OK$.\label{Lemma:Int5}
\end{enumerate}
\end{lemma}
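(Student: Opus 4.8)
The plan is to derive all five items from a single structural input — since $Q_0$ is \emph{classical}, its polar form $B_{Q_0}$ takes values in $\OK$ on $\OKdva\times\OKdva$ — together with the two ideal identities already at our disposal: $\indx{f}=(e_1,e_2)$ with $\indx{f}^2\mid(2)$ (the discussion preceding the lemma) and $\indx{e}\indx{f}=\OK$ with $\indx{e}=\indx{f}^{-1}$ (Lemma~\ref{Lemma:LatticeDecomp}). I would not prove the items in the stated order; instead I would first establish \eqref{Lemma:Int3}, \eqref{Lemma:Int1} and \eqref{Lemma:Int4}, and then read off \eqref{Lemma:Int2} and \eqref{Lemma:Int5} as immediate consequences.

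For \eqref{Lemma:Int3}, observe that for $y\in\indx{e}$ and $z\in\indx{f}$ both $y\e$ and $z\f$ lie in $\OKdva$, so $B_{Q_0}(y\e,z\f)=yz\beta\in\OK$. As $y,z$ range over $\indx{e},\indx{f}$, the products $yz$ generate the ideal $\indx{e}\indx{f}=\OK$, so writing $1=\sum_i r_iy_iz_i$ with $r_i\in\OK$, $y_i\in\indx{e}$, $z_i\in\indx{f}$ gives $\beta=\sum_i r_i(\beta y_iz_i)\in\OK$. The same device yields \eqref{Lemma:Int1}: from $B_{Q_0}(z\f,z'\f)=zz'\gamma\in\OK$ for $z,z'\in\indx{f}$ we get $\gamma\indx{f}^2\subseteq\OK$, and since $\indx{f}^2\mid(2)$ means $2\in\indx{f}^2$, we conclude $2\gamma\in\gamma\indx{f}^2\subseteq\OK$. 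For \eqref{Lemma:Int4}, combine $y\in\indx{e}=\indx{f}^{-1}$ with $2\in\indx{f}^2$ to obtain $2y\in\indx{f}^2\indx{f}^{-1}=\indx{f}$, and $\indx{f}=(e_1,e_2)\subseteq\OK$ because $e_1,e_2\in\OK$.

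The last two items are then corollaries. For \eqref{Lemma:Int2}, note that $2\gamma\in\OK$ by \eqref{Lemma:Int1} and $\beta^2\in\OK$ by \eqref{Lemma:Int3}, hence $2\gamma-\beta^2\in\OK$; its total positivity is exactly the total positive definiteness of $Q_0$ already extracted from the completed square \eqref{Eq:2Q0}, since in each embedding $\sigma$ the form $(2y+\sigma(\beta)z)^2+\sigma(2\gamma-\beta^2)z^2$ must be positive definite, forcing $\sigma(2\gamma-\beta^2)>0$. For \eqref{Lemma:Int5}, combine \eqref{Lemma:Int4} and \eqref{Lemma:Int3}: $2y\in\indx{f}$, while $\beta\in\OK$ and $z\in\indx{f}$ give $\beta z\in\indx{f}$, so $2y+\beta z\in\indx{f}$.

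The genuine subtlety — and the reason for the lemma's careful phrasing — is that $\gamma$ need not itself be integral, so one must resist proving $\gamma\in\OK$; only $2\gamma\in\OK$ holds, which is why \eqref{Lemma:Int1} is pushed through $\indx{f}^2$ rather than $\indx{f}$. Thus the main obstacle is conceptual rather than computational: one has to pair the classicality of $Q_0$ (integrality of $B_{Q_0}$ on $\OKdva$) with the precise ideal-theoretic data $\indx{f}^2\mid(2)$ and $\indx{e}\indx{f}=\OK$ in order to control denominators at the primes above $2$. In particular, the identity $\indx{e}\indx{f}=\OK$ is exactly what upgrades the easy containment $\beta\in\indx{e}=\indx{f}^{-1}$ to the sharp statement $\beta\in\OK$.
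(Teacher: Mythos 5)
Your proof is correct, but it inverts the paper's logical order for items \eqref{Lemma:Int1}--\eqref{Lemma:Int3} and substitutes different key computations for \eqref{Lemma:Int2} and \eqref{Lemma:Int3}. The paper proves \eqref{Lemma:Int1} from $Q_0(e_i\f)=\gamma e_i^2\in\OK$ (needing a small squaring trick to get $\gamma e_1e_2\in\OK$ before concluding $\gamma(e_1,e_2)^2\subseteq\OK$), proves \eqref{Lemma:Int2} via the determinant identity $2\gamma-\beta^2=(e_1f_2-e_2f_1)^2\det M$ with $e_1f_2-e_2f_1\in(e_1,e_2)(f_1,f_2)=\OK$, and only then deduces \eqref{Lemma:Int3} from $\beta^2=2\gamma-(2\gamma-\beta^2)\in\OK$ together with $\OK$ being integrally closed in $K$. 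You instead prove \eqref{Lemma:Int3} directly and independently from $B_{Q_0}(y\e,z\f)=yz\beta$ and $\indx{e}\indx{f}=\OK$, which is arguably more transparent about \emph{why} $\beta$ is integral (classicality plus the normalization of $\f$), and your use of $B_{Q_0}(z\f,z'\f)=zz'\gamma$ streamlines \eqref{Lemma:Int1} by avoiding the cross-term issue. For \eqref{Lemma:Int2} you trade the paper's algebraic identity for the analytic argument that the real form $(2Y+\sigma(\beta)Z)^2+\sigma(2\gamma-\beta^2)Z^2$ is positive definite in each embedding; this is fine, though you are implicitly using that $\sigma(\e),\sigma(\f)$ are $\R$-linearly independent (immediate from $\OKdva=\indx{e}\e\oplus\indx{f}\f$, but worth a word), whereas the paper's formula $(e_1f_2-e_2f_1)^2\det M$ delivers integrality and total positivity simultaneously and records the extra fact that $2\gamma-\beta^2$ differs from $\det M$ by a square in $\OK$. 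Items \eqref{Lemma:Int4} and \eqref{Lemma:Int5} coincide with the paper's argument. You also correctly identify the point of the lemma, namely that only $2\gamma$, not $\gamma$, can be shown integral.
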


\begin{proof}
Obviously, \eqref{Lemma:Int3} follows readily from \eqref{Lemma:Int1} and \eqref{Lemma:Int2} using $\beta\in K$; \eqref{Lemma:Int5} is an immediate consequence of \eqref{Lemma:Int3}, \eqref{Lemma:Int4}, and the assumption on $z$. Thus we only need to prove \eqref{Lemma:Int1}, \eqref{Lemma:Int2} and \eqref{Lemma:Int4}.

Since the form $Q_0$ represents only elements of $\OK$ (when plugging in vectors from $\OKdva$, which the vector $e_i\vct{f}$ certainly fulfills), we have that $Q_0(e_i\vct{f})=\gamma e_i^2\in\OK$ for both $i=1,2$. Then the product $\gamma^2e_1^2e_2^2\in\OK$, and thus $\gamma e_1e_2\in\OK$ as well. Therefore, $(\gamma)(e_1,e_2)^2\subseteq\OK$. As $(2)\subseteq(e_1,e_2)^2$, we obtain $(2\gamma)\subseteq(\gamma)(e_1,e_2)^2\subseteq\OK$, thus concluding \eqref{Lemma:Int1}.

As for \eqref{Lemma:Int2}, note that $2\gamma - \beta^2$ is the determinant of the matrix $\bigl(\begin{smallmatrix}2&\beta\\ \beta&\gamma\end{smallmatrix}\bigr)$. This is the Gram matrix of vectors $\vct{e}$, $\vct{f}$, which can be obtained from the original matrix of our form, 
\[
M = \Biggl(\begin{matrix}Q_0(\vct{x_1})&B_{Q_0}(\vct{x_1},\vct{x_2})\\ B_{Q_0}(\vct{x_1},\vct{x_2})& Q_0(\vct{x_2})\end{matrix}\Biggr)
\]
(where $\vct{x_1}, \vct{x_2}$ are the vectors of the standard basis), as $\bigl(\begin{smallmatrix}2&\beta\\ \beta&\gamma\end{smallmatrix}\bigr)=E^\mathrm{T}ME$ for $E = \bigl(\begin{smallmatrix}e_1& f_1\\ e_2 & f_2\end{smallmatrix}\bigr)$. Taking determinants yields $2\gamma - \beta^2 = (e_1f_2-e_2f_1)^2\cdot\det M$. Since $Q_0$ is an integral form, clearly $\det M \in \OKPlus$. Thanks to
$e_1f_2-e_2f_1 \in (e_1,e_2) (f_1,f_2)$ and $(e_1,e_2) (f_1,f_2) = \indx{f} (f_1,f_2)  = \OK$, we have $(e_1f_2-e_2f_1)^2 \in \OKctv$ and \eqref{Lemma:Int2} is proven.

To prove \eqref{Lemma:Int4}, combine $(2y)\subseteq(2)\indx{f}^{-1}$ and $(2)\subseteq\indx{f}^2$ to obtain $(2y)\subseteq\indx{f}$.
\end{proof}

Let us now take a look at how $5$ can be  represented by $Q$.  Taking into account Remark~\ref{Rem:TrivDecompSquares} along with \conds{the assumptions $m\neq2,3,5$}, it turns out that $Q_0$ has to represent  $4$ or $5$; thus, $2Q_0$ represents $8$ or $10$. Invoking Lemma~\ref{Lemma:Integrity}, both $2y+\beta z$ and $2\gamma-\beta^2$ are elements of $\OK$; hence, we are allowed to use Lemma~\ref{Lemma:10and8}.  \conds{Since we suppose $\splitatcommas{\sqrt2,\sqrt3,\sqrt5,\sqrt6,\sqrt7,\sqrt{10},\sqrt{13},\sqrt{17},\sqrt{21}\notin K}$}, 
we obtain the following possible representations:

\begin{equation}\label{Tab:8and10}
\begin{array}{c|cc|c}
2Q_0(y\vct{e}+z\vct{f}) & (2y+\beta z)^2 &  (2\gamma-\beta^2)z^2 & \text{Case}\\ \hline
10 & 0 & 10 & \text{(i)} \\
   & 1 &  9 & \text{(ii)} \\
   & 4 &  6 & \text{(iii)} \\
   & 9 &  1 & \text{(iv)} \\ \hline
 8 & 0 &  8 & \text{(v)} \\
   & 1 &  7 & \text{(vi)} \\
   & 4 &  4 & \text{(vii)}
\end{array}
\end{equation}

Furthermore, the form $Q$ has to represent $\M$. Since $\M$ is nonsquare by Corollary~\ref{Cor:NonsquareM} \conds{whenever $m\neq5$}, and indecomposable by Proposition~\ref{Prop:IndecM}, we conclude that $\M$ has to be represented by $Q_0$, and thus the form $2Q_0$ represents $2\M$.

Recall that the $\OK$-ideal $\indx{f}^2$ must divide $(2)$. Now we distinguish between two cases: Either $\indx{f}=\OK$, or $\indx{f}$ is a proper ideal such that $\indx{f}^2$ divides $(2)$. 

\bigskip
\noindent \hypertarget{CaseIIInondiag-a}{(a)} 
First, assume that $\indx{f}=\OK$; then necessarily  $\indx{e}=\OK$ as well.

\hypertarget{CaseIIInondiag-a-1}{In cases (ii), (iv) and (vii)}, we have that $(2\gamma-\beta^2)z^2$ is a square of a rational integer, and thus $2\gamma-\beta^2$ is a square in $K$. Looking at $\eqref{Eq:2Q0}$, we see that $2Q_0$ can represent only those elements of $2\OK$ which can be written as a sum of two squares. This yields a contradiction because $2Q_0$ represents $2\M$, which cannot be written as a sum of two squares by Lemma~\ref{Lemma:SquaresUnder2M} \conds{as we have $m\neq2,3,5$}.

\hypertarget{CaseIIInondiag-a-2}{The case (vi)} can be rephrased as \uv{the form $\uqf{2\gamma-\beta^2}$ represents $7$}; part \eqref{Prop:StrongerFormsPrime} of Proposition~\ref{Prop:StrongerForms} yields then that either $\uqf{2\gamma-\beta^2}\cong\uqf{7}$ or the unary form  $\uqf{2\gamma-\beta^2}$ is represented by $\uqf{1}$ (as there is no nonsquare totally positive unit). Hence, one of the binary forms $\bqf17$ or $\bqf11$ is stronger than $2Q_0$. But neither of these forms represents $2\M$: The latter is excluded in Lemma~\ref{Lemma:SquaresUnder2M} explicitly, the former is ruled out by using the same lemma and noting that neither $\frac{2\M}{7}$ nor $\frac{2\M-1}{7}$ is an element of $\OK$. Therefore, $2Q_0$ does not represent $2\M$ either. 

\hypertarget{CaseIIInondiag-a-3}{To deal with the cases (i), (iii) and (v)}, let us look at the representation of $2\M$ by the form $2Q_0$; suppose
\[2\M=\left(2\Tilde{y}+\beta\Tilde{z}\right)^2+\left(2\gamma-\beta^2\right)\Tilde{z}^2 \]
for some $\Tilde{y}, \Tilde{z}\in\OK$. Invoking Lemma~\ref{Lemma:SquaresUnder2M} \conds{along with the assumptions $m\neq2,3,5$}, we have two possibilities for the value of $2\Tilde{y}+\beta\Tilde{z}$, namely $0$ and $\pm1$. First, suppose that $2\Tilde{y}+\beta\Tilde{z}=0$; then necessarily $\left(2\gamma-\beta^2\right)\Tilde{z}^2=2\M$. Moreover, we have assumed that $(2\gamma-\beta^2)z^2=10, 6, 8$ (in cases (i), (iii), (v), respectively); therefore, we obtain that $20\M$, $12\M$, $16\M$, respectively, is a square in $K$. After reducing the integral squares, $5\M$, $3\M$, $\M$, respectively, is a square in $K$. In all cases, it is a contradiction to Corollary~\ref{Cor:NonsquareM} \conds{since the conditions for this subsection imply that $m\notin\{5,21,33,65,85\}$ and $K\neq\BQ{3}{10}$}. Thus, we must have $2\Tilde{y}+\beta\Tilde{z}=\pm1$; in such a case, the ideal $(2, \beta)$ is equal to the whole ring $\OK$. Simultaneously, we have $2y+\beta z=0$ or $\pm2$, which can be rewritten as $\beta z=2(k-y)$ for $k=0$ or $\pm1$. If $\mathfrak{q}$ is a prime ideal dividing the principal ideal $(2)$, then $\mathfrak{q}\nmid(\beta)$, because otherwise $\mathfrak{q}$ would divide the ideal $(2, \beta)=\OK$, which is impossible. It follows that $(2)\mid(z)$; in other words, $z\in(2)$, and there exists $z'\in\OK$ such that $z=2z'$. Applying this to the equality $(2\gamma-\beta^2)z^2=10, 6, 8$ implies $(2\gamma-\beta^2)z'^2=\frac{10}{4}, \frac{6}{4}, \frac{8}{4}$ in cases (i), (iii), (v), respectively. Obviously, this is absurd in cases (i) and (iii), as $(2\gamma-\beta^2)z'^2 \in \OK$ but $\frac{10}{4}, \frac{6}{4}\notin\OK$. To deal with the case (v), we rewrite the equality $(2\gamma-\beta^2)z'^2=2$ as $2(\gamma z'^2-1)=\beta^2z'^2$ and use the same argument again to show $(2) \mid (z'^2)$, i.e.,  $\frac{z'^2}{2} \in \OK$. Then the equality $(2\gamma-\beta^2)\frac{z'^2}{2}=1$ means that $\frac{z'^2}{2}$ is a unit and thus a square, implying that $2$ is a square as well, which is absurd.

\vspace{5mm}
\noindent \hypertarget{CaseIIInondiag-b}{(b)}
Now assume that $\indx{f}\neq \OK$. Recall that  $\indx{e}=\indx{f}^{-1}$ and $\indx{f}^2=(e_1,e_2)^2\mid(2)$.
Note that in this case, $2$ has to be ramified in $K$, and thus this cannot happen for fields with integral basis (B4), i.e., those where $p,q,r\equiv1\pmod4$: Indeed, the discriminant of such fields, which equals to $pqr$, is not divisible by $2$, and hence $2$ is not ramified in $\OK$.

Recall that we have
\begin{equation*}
2Q_0(y\vct{e}+z\vct{f})=(2y+\beta z)^2 +(2\gamma-\beta^2)z^2
\end{equation*}
with $y\in\indx{e}$ and $z\in\indx{f}=(e_1,e_2)\subsetneq\OK$; in particular, $z\notin\UK$.

\hypertarget{CaseIIInondiag-b-1}{In the cases (ii), (iv) and (vii)}, we have again that $2\gamma-\beta^2$ is a square in $K$, and thus this case can be solved by using the same argument as in part (a), i.e., that $2\M$ cannot be written as a sum of two squares \conds{for $m\neq2, 3, 5$}. Alternatively, the impossibility of the case (iv) can also be seen immediately, as $(2\gamma-\beta^2)z^2=1$ obviously implies $z\in\UK$.

\hypertarget{CaseIIInondiag-b-2}{In the case (vi)}, we have an odd rational integer in the right column, i.e.,  $(2\gamma-\beta^2)z^2=7$. Since $z\in\indx{f}$, we obtain that the ideal $\bigl((2\gamma-\beta^2)z^2\bigr)=(7)$ is contained in the ideal $\indx{f}^2$, and hence $(2, 7)\subseteq\indx{f}^2$ as well. But that is absurd, as obviously $(2, 7)=\OK$. (Note that the same method could have been used to deal with the case (ii) as well.)

\hypertarget{CaseIIInondiag-b-3}{In the remaining cases (i), (iii) and (v)}, we consider the representation of $2\M$ by $2Q_0$, i.e., we find $\Tilde{y}\in\indx{e}$ and $\Tilde{z}\in\indx{f}$ such that 
\[2\M=(2\Tilde{y}+\beta\Tilde{z})^2+(2\gamma-\beta^2)\Tilde{z}^2;\] 
\conds{as $m\neq2,3,5$}, Lemma~\ref{Lemma:SquaresUnder2M}  yields that $2\Tilde{y}+\beta\Tilde{z}$ is equal either to $0$ or $\pm1$. The latter is impossible, since $2\Tilde{y}+\beta\Tilde{z}\in\indx{f}$, and $\indx{f}$ does not contain any unit. The former possibility implies $(2\gamma-\beta^2)\Tilde{z}^2=2\M$; recall that we have $(2\gamma-\beta^2)z_n^2=n$ for some $z_n\in\indx{f}$ and $n=10, 6, 8$ in cases (i), (iii), (v), respectively. Multiplying  together the equalities $(2\gamma-\beta^2)\Tilde{z}^2=2\M$ and $(2\gamma-\beta^2)z_n^2=n$, we get that $2n\M$ is a square in $K$ for $n=10, 6, 8$, respectively, i.e., that $5\M$, $3\M$, $\M$, respectively, is a square in $K$. But that is a contradiction to Corollary~\ref{Cor:NonsquareM},  \conds{since the assumptions for this subsection imply $\splitatcommas{m\neq5,21, 33, 65,85}$ and $K\neq\BQ{3}{10}$}.

\begin{conclusion}\label{Concl:CaseIIInondiag}
Let $K$ be a biquadratic field such that $\UKPlus=\UKctv$, and assume $\splitatcommas{\sqrt2,\sqrt3,\sqrt5,\sqrt6,\sqrt7,\sqrt{10},\sqrt{13},\sqrt{17},\sqrt{21}\notin K}$ and $\splitatcommas{m\neq33, 65, 85}$. Then no nondiagonalizable ternary form over $\OK$ is universal. In particular, no such form can represent all of $1$, $2$, $5$ and $\M$. 
\end{conclusion}

Note that at this point, we have proven Theorem~\ref{Thm:Main} for all biquadratic fields $K$ such that $\splitatcommas{\sqrt2,\sqrt3,\sqrt5,\sqrt6,\sqrt7,\sqrt{10},\sqrt{13},\sqrt{17},\sqrt{21}\notin K}$ and $\splitatcommas{m\neq33, 65, 85}$.

%----------------------------------------------------------------------------------------------
\section{Proof in the special cases} \label{Sec:Special cases}
In this section, we focus on the \uv{special cases} of Theorem~\ref{Thm:Main} which are not included in the main proof in Section~\ref{Sec:NonspecialCases}. In particular, we are now interested in the fields which contain $\sqrt2$, $\sqrt3$, $\sqrt5$, $\sqrt6$, $\sqrt7$, $\sqrt{10}$, $\sqrt{13}$, $\sqrt{17}$, $\sqrt{21}$ or where $m\in\{33, 65, 85\}$. Note that in many of these cases, a problem arises only in one branch of the tree structure of the proof (for a basic overview, see Conclusions~\ref{Concl:epsilon}, \ref{Concl:CaseIIIdiag} and \ref{Concl:CaseIIInondiag}).  Hence, our approach is to detect the problems and solve them \uv{locally}. The exception are the cases $m=2$ and $m=5$; they need to be solved by a different method and we postpone them to Subsections~\ref{Subsec:2} and~\ref{Subsec:5}. Moreover, seven most problematic fields  will be treated in Subsection~\ref{Subsec:Specific}.

\subsection{\texorpdfstring{$\sqrt3\in K$}{sqrt3 in K}} \label{Subsec:m=3}
If the field contains $\sqrt3$, then the main proof fails at several places; however, we can use the fact that the fundamental unit $\ve_3=2+\sqrt{3}$ is totally positive and it is a square only in the field $\BQ23$ (which has to be handled separately). Moreover, $2\ve_3=(1+\sqrt3)^2$ is a square, thus we can use Conclusion~\ref{Concl:CaseIIsquare}, provided $\sqrt5 \notin K$. All in all, only two cases remain to be treated separately: $\BQ23$ and $\BQ35$; this will be done in Subsection~\ref{Subsec:Specific}.

\subsection{\texorpdfstring{$\sqrt6\in K$}{sqrt6 in K}, \texorpdfstring{$\sqrt7\in K$}{sqrt7 in K}, \texorpdfstring{$\sqrt{21}\in K$}{sqrt21 in K} or \texorpdfstring{$\sqrt{33}\in K$}{sqrt33 in K}} \label{Subsec:SqrtsWithUnits}
The cases where $m=2$ are handled in Subsection~\ref{Subsec:2}. Otherwise, by Conclusion~\ref{Concl:epsilon} it suffices to show that if $K$ contains any of the aforementioned numbers $\sqrt6$, $\sqrt7$, $\sqrt{21}$ and $\sqrt{33}$, it contains a nonsquare totally positive unit.

Indeed, $\ve_6=5+2\sqrt{6}$,  $\ve_7=8+3\sqrt{7}$, $\ve_{21}=\frac12 (5+\sqrt{21})$  
and $\ve_{33}=23+4\sqrt{33}$ are all totally positive, and as they are equal to $(\sqrt2+\sqrt3)^2$,  $\bigl(\frac{3\sqrt{2}+\sqrt{14}}{2}\bigr)^2$, $\bigl(\frac{\sqrt{3}+\sqrt{7}}{2}\bigr)^2$ and $(2\sqrt3 + \sqrt{11})^2$ respectively, they are not squares with the exception of the fields $\BQ{2}{6}$, $\BQ{2}{7}$, $\BQ37$ and $\BQ{3}{11}$, respectively. But the first two of these exceptional fields belong to the case $m=2$, which is handled in Subsection~\ref{Subsec:2}, and the latter two contain the nonsquare totally positive unit $\ve_3=2+\sqrt3$. 

\subsection{\texorpdfstring{$\sqrt{10}\in K$}{sqrt10 in K}}\label{Subsec:m=10} This case is more challenging than the ones in the previous subsections since $\ve_{10}$ is not totally positive, so we are not sure to which branch of the main proof the field $K$ belongs. However, the only problem with $\sqrt{10}\in K$ arises from the fact that Lemma~\ref{Lemma:10and8} allows more decompositions of $8$ and $10$ than usually. 
 Thus the only place where the main proof of nonuniversality breaks down is one spot in Subsection~\ref{Subsec:CaseIIInondiag} where this particular lemma is invoked. It means that here we only have to consider fields $K$, where all the totally positive units are squares, and forms $Q = \uqf{1} \bot Q_0$, where $2Q_0(y\e+z\f)=(2y+\beta z)^2+(2\gamma-\beta^2)z^2$ for suitable vectors $\e$ and $\f$. For the sake of brevity, denote $\Delta=2\gamma-\beta^2$. Let us stress that although $y$ and $\gamma$ do not necessarily belong to $\OK$,  both $z$ and $2\gamma-\beta^2$ always do (see Lemma~\ref{Lemma:Integrity}).

Note that all fields with $m<10$ are handled elsewhere, namely in Subsections~\ref{Subsec:m=3},~\ref{Subsec:SqrtsWithUnits},~\ref{Subsec:2} and~\ref{Subsec:5}, and the respective proofs do not require $\sqrt{10}\notin K$, since they do not depend on Lemma~\ref{Lemma:10and8}; hence, here we may assume $m=10$.

As in the main proof, we know that since $Q$ represents the nonsquare indecomposable element $\M=4+\sqrt{10}$, $Q_0$ has to represent $\M$, and thus $2Q_0$ represents $2\M$.  Invoking Lemma~\ref{Lemma:SquaresUnder2M}, we know that there is a $z\in\OK$ such that $\Delta z^2$ is either $2\M=8+2\sqrt{10}$ or $2\M-1=7+2\sqrt{10}$. Moreover, the element $\overline{\M}=4-\sqrt{10}$ has the same algebraic properties as $\M$, and hence there is also a $z'\in\OK$ such that $\Delta z'^2$ is either $2\overline{\M}=8-2\sqrt{10}$ or $2\overline{\M}-1=7-2\sqrt{10}$.

Let us go through the possibilities: If the unary form $\uqf{\Delta}$ represents both $2\M$ and $2\overline{\M}$, then $\M\overline{\M} = 4^2-(\sqrt{10})^2=6$ must be a square -- a contradiction. Similarly, if it represents $2\M$ and $2\overline{\M}-1$, then $2\M\bigl(2\overline{\M}-1\bigr) = 16-2\sqrt{10}$ should be a square, but it is not (the field $\Q\Bigl(\sqrt{16-2\sqrt{10}}\Bigr)$ is not biquadratic). The case with $2\overline{\M}$ and $2\M-1$ is analogous.

The only remaining possibility is when $\Delta z^2 = 2\M-1 = 7+2\sqrt{10}$ and $\Delta z'^2 = 2\overline{\M}-1 = 7-2\sqrt{10}$, in which case $(2\M-1)\bigl(2\overline{\M}-1\bigr)=9$ is indeed a square. Applying the norm on the equalities $\Delta z^2 = 2\M-1 = 7+2\sqrt{10}$ and  $\Delta(z^2+z'^2) = (2\M-1)+\bigl(2\overline{\M}-1\bigr)= 14$, it follows that $\norm{K/\Q}{\Delta}$ divides both $\norm{K/\Q}{2\M-1}=3^4$ and $\norm{K/\Q}{14} = 2^4\cdot 7^4$; therefore, $\norm{K/\Q}{\Delta}=\pm 1$. However, this would mean that $\Delta=2\gamma-\beta^2$ is a totally positive unit, hence a square. But then $2\M-1$ must be a square as well; according to Lemma~\ref{Lemma:2M-1isNonsquare}, this never happens.

\subsection{\texorpdfstring{$\sqrt{13}\in K$}{sqrt13 in K}}
Just as in the previous section, the only problem caused by $\sqrt{13}$ arises from another possible decomposition in Lemma~\ref{Lemma:10and8}, so we only have to fix the case of nondiagonalizable forms. In particular, we may again assume that the only totally positive units in the field are the squares of units.

Furthermore, the proofs of the cases with $m<10$ as well as $m=10$ do not require the above mentioned lemma. Thus the only potentially problematical field containing $\sqrt{13}$ with $m\neq 13$ could be $\BQ{11}{13}$; however, in this field we have a nonsquare totally positive unit $\ve_{11}=10+3\sqrt{11}$, and so Conclusion~\ref{Concl:epsilon} applies. Therefore, we can suppose $m=13$.

Denoting as before $\Delta=2\gamma-\beta^2$, we can find $z\in\OK$ such that $\Delta z^2$ is either $2\M  = 5+\sqrt{13}$ or $2\M-1=4+\sqrt{13}$ and $z'\in\OK$ such that $\Delta z'^2$ is either $2\overline{\M} = 5-\sqrt{13}$ or $2\overline{\M}-1=4-\sqrt{13}$. Again we go through all possibilities: If $\Delta z^2=2\M$ and $\Delta z'^2=2\overline{\M}$, then $\M\overline{\M} = \frac14\bigl(5^2-(\sqrt{13})^2\bigr) = 3$ must be a square, which it is not. If $\Delta z^2=2\M-1$ and $\Delta z'^2=2\overline{\M}-1$, then $(2\M-1)\bigl(2\overline{\M}-1\bigr) = \bigl(4^2-(\sqrt{13})^2\bigr) = 3$ gives a contradiction in the same way. If $\Delta z^2=2\M$ and $\Delta z'^2=2\overline{\M}-1$, then $2\M(2\overline{\M}-1) = 7 - \sqrt{13} = \bigl(\frac{\sqrt{2}-\sqrt{26}}{2}\bigr)^2$ is a square only in $\BQ{2}{13}$ where $m=2$. The case with $2\M-1$ and $2\overline{\M}$ is analogous and also collapses only in the field $\BQ{2}{13}$.

\subsection{\texorpdfstring{$\sqrt{17}\in K$}{sqrt17 in K}} \label{Subsec:17}
We use the same technique as in the previous two subsections ($\sqrt{10}\in K$ and $\sqrt{13}\in K$), as the problem with $\sqrt{17}\in K$ is again only in the use of Lemma~\ref{Lemma:10and8}, and hence it arises only when dealing with the nondiagonalizable form $Q_0$.

Similarly as above, we may put aside fields with small values of $m$: All the cases with $m\leq10$ have been solved without any application of this lemma, and hence are valid also when $\sqrt{17}$ is contained in the field. The same holds for the field $\BQ{13}{17}$. Thus it remains to consider the fields $\BQ{11}{17}$, $\BQ{14}{17}$ and $\BQ{15}{17}$; but the corresponding units $\ve_{11}=10+3\sqrt{11}$, $\ve_{14}=15+4\sqrt{14}$ and $\ve_{15}=4+\sqrt{15}$ are here totally positive and nonsquare, enabling us to use Conclusion~\ref{Concl:epsilon}. Thus, there is no harm in assuming $m=17$.

Just as before, we denote $\Delta = 2\gamma-\beta^2$ and use the nonsquareness and indecomposability of $\M=\frac12(5+\sqrt{17})$. Moreover, note that in this case $2\M-1$ is not totally positive, thus we get only one possibility from Lemma~\ref{Lemma:SquaresUnder2M}: There is a $z\in \OK$ such that $\Delta z^2 = 2\M=5+\sqrt{17}$, and also a $z'\in \OK$ such that $\Delta z'^2 = 2\overline{\M}=5-\sqrt{17}$. Putting the two equalities together, we find that $\M\overline{\M} = \frac14\bigl(5^2-(\sqrt{17})^2\bigr) = 2$ must be a square, which is a contradiction. 

\subsection{\texorpdfstring{$m=65$}{m=65}} 
The problem with $m=65$ is that in this case, for $\M=\M_{1/2}=\frac12(9+\sqrt{65})$ we have  $5\M_{1/2}=\left(\frac12\left(5+\sqrt{65}\right)\right)^2$ (see also Corollary~\ref{Cor:NonsquareM}). The only part of the main proof which fails here is if there is no nonsquare totally positive unit in $K$ and we have a nondiagonalizable form, i.e, in Subsection~\ref{Subsec:CaseIIInondiag} (see also Conclusion~\ref{Concl:CaseIIInondiag}). Thus we have to use a modified approach in this branch of the proof.

The solution is simple since we do not have to change almost anything. We consider the representations of $2$ and $5$ in exactly the same way as before, and in the place where we used $\M=\M_{1/2}$, we use the number $A=\frac12\left(25+3\sqrt{65}\right)$; the rest of the proof remains word by word the same. But we have to check the required properties of $A$ by hand:

\begin{lemma} \label{Lemma:65}
Let $K=\BQ{65}{s}$ where $t>s>65$; for the element $A=\frac12\left(25+3\sqrt{65}\right)$ the following holds:
\begin{enumerate}
    \item $A$ is totally positive,
    \item $A$ is indecomposable, 
    \item $2A\succcurlyeq \omega^2$ implies $\omega=0$, 
    \item $A$, $3A$ and $5A$ are not squares.
\end{enumerate}
\end{lemma}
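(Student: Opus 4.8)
The plan is to reduce every claim to the quadratic subfield $\Q(\sqrt{65})$, exploiting that the nontrivial $\Q(\sqrt{65})$-conjugate $\overline A=\tfrac12(25-3\sqrt{65})$ is tiny: since $3\sqrt{65}=\sqrt{585}<25$, one has $0<\overline A<\tfrac12$. Consequently, in the two embeddings $\sigma$ of $K$ with $\sigma(\sqrt{65})=-\sqrt{65}$ we have $\sigma(A)=\overline A$ and $\sigma(2A)=2\overline A<1$, so any totally positive summand of $A$, and any square $\preccurlyeq 2A$, is extremely small under these two embeddings. I will repeatedly subtract these two embeddings to control the $\sqrt s$- and $\sqrt t$-parts of a competitor. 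I also record the two possible shapes of $K$: writing $m=s_0t_0=65$ with $s_0>t_0$, either $(s_0,t_0)=(65,1)$, so $s=m_0$ and $t=65m_0$ with $\sqrt t>66$, or $(s_0,t_0)=(13,5)$, so $s=5m_0$ and $t=13m_0$ with $\sqrt t/\sqrt s=\sqrt{13/5}$; in both shapes $m_0=\gcd(s,t)\ge 14$ is coprime to $65$, and every coordinate of an element of $\OK$ lies in $\tfrac14\Z$ (Remark~\ref{Rem:BasisElements}).

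Claims (1) and (4) are quick. For (1), $A\in\Q(\sqrt{65})$ has the four $K$-conjugates $A,\overline A$ (each twice), both positive. For (4), writing $N$ for $\norm{\Q(\sqrt{65})/\Q}{\,\cdot\,}$, we have $N(A)=\tfrac14(625-9\cdot 65)=10$, so $N(A)=10$, $N(3A)=90$, $N(5A)=250$ are not perfect squares and none of $A,3A,5A$ is a square already in $\Q(\sqrt{65})$. If $nA$ ($n\in\{1,3,5\}$) were a square in $K$, Lemma~\ref{Lemma:MartinSquares}(\ref{Lemma:MartinSquaresItem1}) would give $nA=\bigl(\tfrac{c\sqrt s+d\sqrt t}{2}\bigr)^2$ with $c,d\in\Z$ nonzero; since $st=65m_0^2$ gives $\sqrt{st}=m_0\sqrt{65}$, comparing coefficients of $\sqrt{65}$ forces $cd\,m_0=3n$, whence $m_0\mid 3n$ and $m_0\le 15$. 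This is incompatible with $m_0\ge 14$ and $\gcd(m_0,65)=1$, so (4) follows (cf.\ Corollary~\ref{Corollary:BecomingSquare}).

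For (3), let $\omega=x+y\sqrt{65}+z\sqrt s+w\sqrt t$ satisfy $\omega^2\preccurlyeq 2A$, so $\sigma(\omega)^2\le\sigma(2A)$ in every embedding. The two embeddings negating $\sqrt{65}$ give $|\sigma(\omega)|\le\sqrt{2\overline A}<0.91$, and subtracting them bounds $|z\sqrt s-w\sqrt t|<0.91$; the two embeddings fixing $\sqrt{65}$ give $|\sigma(\omega)|\le\sqrt{25+3\sqrt{65}}<7.02$, hence $|z\sqrt s+w\sqrt t|<7.02$. Thus $|z|\sqrt s$ and $|w|\sqrt t$ are both below $4$, which together with $\sqrt s>8.3$, $\sqrt t>13.4$ and $z,w\in\tfrac14\Z$ forces $z,w\in\{0,\pm\tfrac14\}$ (and $w=0$ at once in the shape $t=65m_0$, where $\sqrt t>66$). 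Every nonzero choice is excluded by $|z\sqrt s-w\sqrt t|<0.91$: for instance $z=w=\tfrac14$ gives $\tfrac14|\sqrt s-\sqrt t|=\tfrac14\sqrt{m_0}(\sqrt{13}-\sqrt5)\ge\tfrac14\sqrt{14}(\sqrt{13}-\sqrt5)>1$, and the one-sided and mixed-sign cases are larger still. Hence $z=w=0$, $\omega\in\Q(\sqrt{65})$, and a direct check of the finitely many $\omega\in\OKPlus[\Q(\sqrt{65})]\cup\{0\}$ with $\overline\omega^2<1$ and $\omega^2\le 2A$ (using $N(\omega)^2=N(\omega^2)\le N(2A)=40$) leaves only $\omega=0$.

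Claim (2) runs on the same mechanism with the sharper conjugate bound, and is the only genuinely delicate point. Suppose $A=\gamma+\gamma'$ with $\gamma,\gamma'\in\OKPlus$ and write $\gamma=x+y\sqrt{65}+z\sqrt s+w\sqrt t$. The two $\sqrt{65}$-negating embeddings send $\gamma$ into $(0,\overline A)$, so their difference yields the very tight bound $|z\sqrt s-w\sqrt t|<\tfrac12\overline A<0.21$, while the fixing embeddings give $|z\sqrt s+w\sqrt t|<\tfrac12 A<12.3$. As before $|w|\sqrt t<6.25$ with $\sqrt t>13.4$ forces $w\in\{0,\pm\tfrac14\}$, and $|z|\sqrt s<6.25$ with $\sqrt s>8.36$ forces $z\in\{0,\pm\tfrac14,\pm\tfrac12\}$. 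Now $|z\sqrt s-w\sqrt t|<0.21$ rules out every nonzero pair: the only ratios $z/w$ near $\sqrt{t/s}=\sqrt{13/5}\approx1.61$ available are $1$ and $2$, and both $\tfrac14|\sqrt s-\sqrt t|$ and $\tfrac14|2\sqrt s-\sqrt t|=\tfrac14\sqrt{m_0}(2\sqrt5-\sqrt{13})$ exceed $0.8>0.21$ for $m_0\ge 14$ (in the shape $t=65m_0$ the factor $\sqrt t>66$ again kills $w$ immediately). Hence $z=w=0$ and $\gamma,\gamma'\in\Q(\sqrt{65})$, contradicting the indecomposability of $A$ there; the latter holds because $A=11+3\omega_{65}$ is the odd-index semiconvergent $\alpha_{1,1}$ of $-\overline{\omega_{65}}=\tfrac{\sqrt{65}-1}{2}=[3,\overline{1,1,7}]$, lying between the convergents $\M=4+\omega_{65}$ and $\alpha_2=7+2\omega_{65}$. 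The main obstacle is precisely this last, near-degenerate situation in the shape $t=13m_0$, where $\sqrt t$ is only moderately larger than $\sqrt{65}$ so the crude bound $|w|\sqrt t<\text{(rational part of }A)$ is far too weak; it is overcome only because $\overline A<\tfrac12$ shrinks the Diophantine window $|z\sqrt s-w\sqrt t|<\tfrac12\overline A$ below the gap separating $\sqrt{13/5}$ from its admissible quarter-integer neighbours.
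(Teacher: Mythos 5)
Your proof is correct, and in parts (2), (3) and (4) it takes a genuinely different route from the paper. For (2), the paper reduces to Theorem~\ref{Theorem:Indecomposables} (which requires $\sqrt{t}>7\sqrt{65}$ in basis (B4)) and then disposes of the residual fields $\BQ{65}{5m_0}$ with $13<m_0\leq 245$ by computer; you instead run a uniform conjugate-difference argument, using that both $\sqrt{65}$-negating embeddings squeeze a totally positive summand into $(0,\overline{A})$ with $\overline{A}<\tfrac12$, so that $|z\sqrt{s}-w\sqrt{t}|$ is forced below $0.21$ while no admissible quarter-integer pair $(z,w)$ can achieve this -- no computation and no appeal to Theorem~\ref{Theorem:Indecomposables} beyond indecomposability in the quadratic subfield (where your identification of $A$ as the semiconvergent $\alpha_{1,1}$ of $[3,\overline{1,1,7}]$ matches the paper). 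For (3) the paper compares traces, is left with the single field $\BQ{65}{85}$, and checks it directly; your embedding-difference bound is sharper and leaves no residual case. For (4) the paper verifies that $\Q(\sqrt{A})$, $\Q(\sqrt{3A})$, $\Q(\sqrt{5A})$ are quartic but not biquadratic, whereas you compute $\norm{\Q(\sqrt{65})/\Q}{A}=10$ and then use Lemma~\ref{Lemma:MartinSquares} to force $cd\,m_0=3n$ against $m_0\geq 14$, $\gcd(m_0,65)=1$ -- essentially the mechanism of Corollary~\ref{Corollary:BecomingSquare}, and arguably easier to verify by hand. Two harmless imprecisions: in the shape $s=m_0$ one only has $\sqrt{s}\geq\sqrt{66}>8.12$ rather than your quoted $8.3$ resp.\ $8.36$ (those bounds are correct only in the shape $s=5m_0$), so the preliminary list for $z$ could in principle include $\pm\tfrac34$; but there $w=0$ is already forced, so Remark~\ref{Rem:BasisElements} puts $z\in\tfrac12\Z$, and in any case every nonzero $z$ with $w=0$ violates $|z\sqrt{s}-w\sqrt{t}|<0.21$ outright. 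The one-sided cases ($z=0$ or $w=0$) in (2) should be stated explicitly as in your (3), but they are trivially excluded.
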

\begin{proof} 
The claim (1) is trivial and (4) can be seen by checking that the three fields generated by $\sqrt{A}$, $\sqrt{3A}$ and $\sqrt{5A}$ are all of degree four but not biquadratic.

(2) In $\Q(\sqrt{65})$, $A$ is a semiconvergent of $\frac{\sqrt{65}-1}{2}$, thus it is indecomposable in $\Q(\sqrt{65})$. Since $65\equiv1\pmod4$, then except for basis (B4) we have $m=q$, thus $A$ remains indecomposable in $K$ according to Theorem~\ref{Theorem:Indecomposables}. In case of basis (B4), we have to check the inequality $\sqrt{t}>7\sqrt{65}$ to be able to use Theorem~\ref{Theorem:Indecomposables} ($7$ is the biggest coefficient in the continued fraction of $\frac{\sqrt{65}-1}{2}$). If $m$ and $s$ are coprime, then this is clearly always satisfied thanks to $t=65s$. If, however, they are not, we have $s=5m_0$ and $t=13m_0$, which means that the mentioned theorem can be used only if $\sqrt{13m_0}>7\sqrt{65}$, i.e., if $m_0>7^2\cdot5$. The indecomposability of $A$ in the remaining fields $\BQ{65}{5m_0}$ with $m_0\equiv 1 \pmod4$, $13<m_0\leq 7^2\cdot 5$ can be checked directly, e.g., by a computer program. 

(3) Denote $\omega=\frac{a}{4}+\frac{b}{4}\sqrt{65}+\frac{c}{4}\sqrt{s}+\frac{d}{4}\sqrt{t}$ for integers $a,b,c,d$, and suppose $\omega\neq0$; note that either all of $a,b,c,d$ are even, or all of them are odd. By comparing the traces of $2A$ and $\omega^2$ we obtain the inequality $25\cdot16 \geq a^2+65b^2+sc^2+td^2$. It is immediate that $|b|,|c|,|d|\leq \sqrt{\frac{25\cdot 16}{65}}<3$, which leaves the possibilities $b,c,d \in \{0, \pm1, \pm2\}$. 
It is easy to check that all $b$, $c$, $d$ cannot be zeros at once. If one of these numbers equals $\pm2$, the remaining two must be zeros, and the rest of the computation is easy. If, on the other hand, one of them is $\pm1$, then so must be the remaining two (this follows from the form of the integral basis (B4)) and $s\equiv 1\pmod4$. But then we have $25\cdot 16 \geq 1+65+s+t$; clearly, if $65$ and $s$ are coprime, then $t=65s$, which is too large. If they are not, then we have $s=5m_0$ and $t=13m_0$, which gives $334\geq 18m_0$; together with the condition $m_0\equiv 1\pmod4$ and $m_0>13$ the only theoretically possible case is $m_0=17$, i.e., the field $\BQ{65}{85}$. But one can easily check that none of the possibilities in $25+3\sqrt{65} \succcurlyeq \bigl(\frac14(1\pm\sqrt{65}\pm\sqrt{85}\pm\sqrt{221})\bigr)^2$ holds. Thus, the only $\omega\in\OK$ with the required property is $\omega=0$.
\end{proof}

\subsection{\texorpdfstring{$m=85$}{m=85}} \label{Subsec:m=85}
The problem with $m=85$ is just the same as with $m=65$, and it appears only in Subsection~\ref{Subsec:CaseIIInondiag}. The solution is very similar to that for $m=65$ (even slightly easier): In the place of the main proof where we used $\M=\M_{1/2}$, we use $\M_1 = 2\M_{1/2}-1 = 10+\sqrt{85}$ instead.

The properties of $\M_1$ for $m=85$ which we need are:
\begin{enumerate}
    \item $\M_1$ is totally positive (this is immediate),
    \item $\M_1$ is indecomposable (this follows from Proposition~\ref{Proposition:2M-1indecomposable}),
    \item $2\M_1\succcurlyeq\omega^2$ implies $\omega^2\in\{0,1\}$ (which can be checked just as in Lemma~\ref{Lemma:65}(3); note that Lemma~\ref{Lemma:SquaresUnder2M} cannot be used since here $\M \neq \M_1$),
    \item $\M_1$, $3\M_1$ and $5\M_1$ are all nonsquare in $K$ (this comes directly from Lemma~\ref{Lemma:NonsquareM}).
\end{enumerate}

Once we have these properties, we can replace $\M$ by $\M_1$ and then use the same procedure as in the main proof.

\subsection{\texorpdfstring{$\sqrt2\in K$}{sqrt2 in K}} \label{Subsec:2} 
In any field $K$ containing $\sqrt{2}$, i.e., in fields where $m=2$, we exhibit four explicit elements $\lambda_1, \lambda_2, \lambda_3, \lambda_4 \in \OKPlus$ and show that no ternary form over $\OK$ can represent these four elements at the same time. (In the next subsection we do the same for all fields containing $\sqrt5$.) The proof uses the key idea of the method of escalation developed in \cite{BH}. Exceptional cases are handled in Subsection~\ref{Subsec:Specific}.

Contrary to the rest of the paper, this subsection, as well as the next two ones, uses computer programs significantly. As we will see, in an explicit field, the problem can be reduced to a finite computation. Thus, we first find a relatively small integer $N$, such that all fields $\BQ2s$ with $s\geq N$ can be solved uniformly via computing determinants of a few hundreds or thousands of matrices (we used a program in Mathematica) -- this reduces the infinite family to a finite one. All the remaining cases can be then solved after making some individual adjustments.

To show that four elements $\lambda_1, \ldots, \lambda_4 \in\OKPlus$ cannot be represented by the same ternary form $Q$, it suffices to prove the following:

\smallskip

\textit{There exists no $4\times 4$ symmetric, totally positive semidefinite matrix with entries in $\OK$ whose diagonal elements are $\lambda_1, \ldots, \lambda_4$ and which is singular}. 

\smallskip

The reason is as follows: Suppose that all four elements are represented, i.e., $Q(\vct{\ell}_i)=\lambda_i$ for $\vct{\ell}_i\in\OK^3$, $i=1, \ldots, 4$, and consider the Gram matrix $G$ of $\vct{\ell}_1,\ldots, \vct{\ell}_4$ (defined as $G=(g_{ij})$, $g_{ij}=B_Q(\vct{\ell}_i,\vct{\ell}_j)$). As a Gram matrix of a totally positive definite form, $G$ is symmetric and totally positive semidefinite, and from the definition it is clear that the $\lambda_i$'s are its diagonal entries. Since regularity of $G$ would imply linear independence of $\vct{\ell}_1,\ldots,\vct{\ell}_4$, it follows that $G$ must be singular.

Therefore, our strategy is to check all possible totally positive semidefinite matrices $G$ with the given diagonal and to prove that they are necessarily regular. If the matrix
\[
G=\begin{pmatrix}
\lambda_1 & \rho_{12} & \rho_{13} & \rho_{14}\\
\rho_{12} & \lambda_2 & \rho_{23} & \rho_{24}\\
\rho_{13} & \rho_{23} & \lambda_3 & \rho_{34}\\
\rho_{14} & \rho_{24} & \rho_{34} & \lambda_{4}\\
\end{pmatrix}
\]
is totally positive semidefinite, Sylvester's criterion applied to $2\times 2$ subdeterminants yields the necessary condition $\rho_{ij}^2 \preccurlyeq \lambda_i\lambda_j$ for all non-diagonal entries $\rho_{ij}$. It turns out that on each of the six positions $(i,j)$, $i<j$, there are only finitely many numbers $\rho_{ij}$ which satisfy this condition (explicit lists for our choice of diagonal coefficients are provided in Lemma~\ref{Lemma:2coeff}). Thus altogether we obtain only finitely many candidates for matrix $G$. By checking their determinant, we prove them all to be regular.

\bigskip

The method described so far can be used for \emph{any} totally real field $K$ (and will be used in the next two subsections as well). Now let us start with $\sqrt{2}\in K$, i.e., $m=2$. It also means that $s$ is odd and $t=2s$. In this case, we put
\[
\lambda_1=1,\quad \lambda_2=\M=2+\sqrt{2}, \quad \lambda_3=3 \quad \text{and} \quad \lambda_4=\mathcal{S}.
\] 
Our next aim is to determine all the possible coefficients $\rho_{ij}$ which satisfy $\lambda_i\lambda_j\succcurlyeq \rho_{ij}^2$ for all $i,j$. In this case, it follows that $\rho_{12}=0$ since $\lambda_1\lambda_2 = \M =2+\sqrt{2}$ is indecomposable in $K$ and not a square, see Proposition~\ref{Prop:IndecM} and Corollary~\ref{Cor:NonsquareM}. In the same manner, we can see that $\rho_{14}=0$, which is a consequence of the indecomposability of $\S$ in $K$, see Proposition~\ref{Prop:IndecS}, and of the fact that $\S$ is not a square except for the cases when $s=3$ or $5$, see Lemma~\ref{Lemma:NonsquareS}. These cases will be resolved separately in Subsection~\ref{Subsec:Specific}. Lemma~\ref{Lemma:TrivDecomp} claims that possible values of $\rho_{13}$ belong to the set $\{0,\pm 1,\pm\sqrt{2}\}$ for $s\neq3,5$. The remaining coefficients $\rho_{ij}$ are examined in the following lemma.

\begin{lemma} \label{Lemma:2coeff}
Let $s$ be an odd square-free positive integer such that $\splitatcommas{s\neq3,5,7,11,13,15,17,19,21,29,33}$, and $K=\BQ{2}{s}$.
\begin{enumerate}
\item If $3(2+\sqrt{2})\succcurlyeq\omega^2$ where $\omega\in\OK$, then $\omega\in\{0,\pm 1,\pm(1+\sqrt{2})\}$. \label{Lemma:2coeff-1}
\item If $(2+\sqrt{2})\mathcal{S}\succcurlyeq\omega^2$ where $\omega\in\OK$, then $\omega=0$. \label{Lemma:2coeff-2}
\item If $3\mathcal{S}\succcurlyeq\omega^2$ where $\omega\in\OK$, then $\omega\in\{0,\pm 1,\pm\sqrt{2}\}$. \label{Lemma:2coeff-3}
\end{enumerate}

\end{lemma}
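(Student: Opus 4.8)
The plan is to dispose of all three parts by a single two-step scheme: first a trace comparison showing that any $\omega\in\OK$ with $\alpha\succcurlyeq\omega^2$ must already lie in $\Z[\sqrt2]=\OK[\Q(\sqrt2)]$, and then a short finite check inside the quadratic subfield $\Q(\sqrt2)$. Recall that here $m=2$, so $\M=\M_1=2+\sqrt2$, the integral basis is (B1) for $s\equiv3\pmod4$ and (B2) for $s\equiv1\pmod4$ (never (B4), so no quarter-integers occur), and $\S$ equals $\S_1=\cs+\sqrt s$ or $\S_{1/2}=\tfrac{\cso+\sqrt s}2$ accordingly. I would write a general $\omega$ via Remark~\ref{Rem:BasisElements} as $\omega=x+y\sqrt2+z\sqrt s+w\sqrt{2s}$ with the appropriate (half-)integer coordinates, record the identity $\Tr{K/\Q}{\omega^2}=4\bigl(x^2+2y^2+sz^2+2sw^2\bigr)$, and compute $\Tr{K/\Q}{\alpha}$ in each case. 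Since $\sqrt2\,\S$ has zero trace, $\Tr{K/\Q}{(2+\sqrt2)\S}=2\Tr{K/\Q}{\S}$, so the three right-hand sides are $24$, $2\Tr{K/\Q}{\S}$ and $3\Tr{K/\Q}{\S}$, where $\Tr{K/\Q}{\S}$ equals $4\cs$ or $2\cso$ and is of order $\sqrt s$.

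For the reduction I would use $\Tr{K/\Q}{\omega^2}\le\Tr{K/\Q}{\alpha}$, which bounds each of $sz^2$ and $2sw^2$. Feeding in the \emph{exact} value of the ceiling $\cs$ (rather than the weak bound $\cs<\sqrt s+1$) together with the hypothesis excluding the small $s$, the threshold inequalities such as $s>4\cs$ become valid and force $z=0$ and $w=0$; this disposes of parts \eqref{Lemma:2coeff-1} and \eqref{Lemma:2coeff-2} immediately, in both bases. The delicate case is part \eqref{Lemma:2coeff-3}, where the larger bound $3\Tr{K/\Q}{\S}$ leaves, for the borderline fields of smallest admissible $s$, a short list of candidates carrying exactly one half-integer coordinate (in (B1) a coefficient $w=\pm\tfrac12$, forcing $b,d$ odd; in (B2) possibly $z=\pm\tfrac12$). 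Each such candidate I would eliminate by applying the coordinatewise inequalities of Lemma~\ref{Lemma:TotPositiveInequalities} to $\alpha-\omega^2$: expanding $\omega^2$ shows that its coefficient in front of $\sqrt s$ (here $bd\neq0$) does not match that of $\alpha$, while the rational part of $\alpha-\omega^2$ is small (at most $3\cs-\tfrac s2-\tfrac12$ in the (B1) subcase), so the $\sqrt s$-clause would demand $3\cs-\tfrac s2-\tfrac12>|3-bd|\sqrt s$, which fails for every admissible $s$. Thus $\omega\in\Z[\sqrt2]$ in all three parts.

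To finish, write $\omega=e+f\sqrt2$ with $e,f\in\Z$. The point is that the two embeddings $\sigma_3,\sigma_4$ sending $\sqrt s\mapsto-\sqrt s$ carry $\S$ to its conjugate $\cs-\sqrt s\in(0,1)$, so $\sigma_3(\alpha)$ and $\sigma_4(\alpha)$ are small: in part \eqref{Lemma:2coeff-2}, for instance, $\sigma_3(\alpha)<2+\sqrt2$ and $\sigma_4(\alpha)<2-\sqrt2$, with analogous bounds for part \eqref{Lemma:2coeff-3}, while in part \eqref{Lemma:2coeff-1} (where $\alpha=6+3\sqrt2\in\Q(\sqrt2)$) one simply uses the two conjugates $6\pm3\sqrt2$. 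Since $\sigma_3(\omega)=e+f\sqrt2$ and $\sigma_4(\omega)=e-f\sqrt2$, the inequalities $\sigma_3(\omega)^2\le\sigma_3(\alpha)$ and $\sigma_4(\omega)^2\le\sigma_4(\alpha)$ trap $e+f\sqrt2$ and $e-f\sqrt2$ in short intervals, leaving only finitely many integer pairs. Enumerating them yields exactly the listed sets $\{0,\pm1,\pm(1+\sqrt2)\}$, $\{0\}$ and $\{0,\pm1,\pm\sqrt2\}$.

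The main obstacle is the reduction to $\Z[\sqrt2]$ in the fields of smallest admissible $s$, where the crude estimate $\cs<\sqrt s+1$ is too weak; one must argue with the exact ceiling, and the long exclusion list in the hypothesis is calibrated precisely so that the threshold inequalities (e.g.\ $s>4\cs$) hold. In part \eqref{Lemma:2coeff-3} the trace bound must moreover be supplemented by the coordinatewise inequalities of Lemma~\ref{Lemma:TotPositiveInequalities} to clear away the stray half-integer candidates. Keeping track of the parity condition $b\equiv d\pmod2$ coming from the integral bases, and separating (B1) from (B2), is the chief bookkeeping burden; once $\omega$ is known to be integral over $\Q(\sqrt2)$, the remainder is an elementary finite verification.
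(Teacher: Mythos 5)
Your proposal is correct and follows essentially the same strategy as the paper: compare traces to force the $\sqrt{s}$- and $\sqrt{2s}$-coordinates of $\omega$ to vanish, then finish with a conjugate/embedding bound inside $\Q(\sqrt2)$ to enumerate the finitely many $e+f\sqrt2$. The only real difference is that the paper obtains the reduction to $\Z[\sqrt2]$ by a clean threshold inequality only for $s\geq 168$ and disposes of the intermediate range by computer, whereas you eliminate the surviving borderline candidates (those with a nonzero half-integer coordinate in front of $\sqrt{s}$ or $\sqrt{2s}$) by hand via Lemma~\ref{Lemma:TotPositiveInequalities}; both routes work.
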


\begin{proof}
We will show only part (3) of this lemma, the proofs of the other statements are analogous. First of all, let $s\equiv 3 \pmod{4}$, which implies that $\S=\S_1= \cs+\sqrt{s}$. 
If 
\[
\omega=\frac{a}{2}+\frac{b}{2}\sqrt{2}+\frac{c}{2}\sqrt{s}+\frac{d}{2}\sqrt{2s}
\]
for some $a,b,c,d\in\Z$, then by comparing the traces of $3\S$ and $\omega^2$ we can conclude that 
\[
3\cs\geq\frac{a^2}{4}+\frac{b^2}{2}+\frac{c^2}{4}s+\frac{d^2}{2}s.
\]
If $\frac{s}{4}>3\cs$, then necessarily $c=d=0$ and $\omega$ belongs to the quadratic subfield $\Q(\sqrt{2})$; this occurs whenever $s\geq 168$. 
In such a case, it follows that $\omega=a'+b'\sqrt{2}$ where $a',b'\in\Z$, which means that
\[
3\cs+3\sqrt{s}\succcurlyeq a'^2+2b'^2+2a'b'\sqrt{2}.
\]
This inequality can be satisfied only if 
\[
3\cs-3\sqrt{s}\geq a'^2+2b'^2,
\]
from which follows that $a'^2+2b'^2<3$. Hence $\omega\in\{0,\pm 1,\pm\sqrt{2}\}$. It is easy to verify by a computer program that this is in fact also true for all $s<168$ except of the ones listed in the statement of this lemma. We can use similar argumentation for $s\equiv 1\pmod{4}$.
\end{proof}

Knowing all possible values of $\rho_{ij}$, we are able to construct all totally positive semidefinite matrices with the given diagonal -- so to finish our task, it suffices to check their determinants. A minor problem which has to be handled is that the value of $\S$ depends on the chosen field $K$, but this can easily be overcome:

\begin{lemma} \label{lemma:coef2}
There are no singular $4\times 4$ totally positive semidefinite matrices with diagonal entries $\lambda_1=1$, $\lambda_2=2+\sqrt{2}$, $\lambda_3=3$ and $\lambda_4=\S$ over $\OK$ for $K=\BQ{2}{s}$ where $\splitatcommas{s \neq3,5,7,11,13,15,17,19,21,29,33}$.
\end{lemma}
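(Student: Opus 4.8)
The plan is to assemble the finitely many candidate Gram matrices explicitly and then show that their determinant cannot vanish. First I would collect the constraints on the off-diagonal entries $\rho_{ij}$ that are already available. The discussion preceding the lemma gives $\rho_{12}=\rho_{14}=0$, because $\lambda_1\lambda_2=\M=2+\sqrt2$ and $\lambda_1\lambda_4=\S$ are indecomposable (Propositions~\ref{Prop:IndecM} and~\ref{Prop:IndecS}) and nonsquare (Corollary~\ref{Cor:NonsquareM} and Lemma~\ref{Lemma:NonsquareS}, the cases $s=3,5$ being deferred); Lemma~\ref{Lemma:2coeff}\eqref{Lemma:2coeff-2} forces $\rho_{24}=0$; and the remaining entries are confined to the finite sets $\rho_{13}\in\{0,\pm1,\pm\sqrt2\}$ (from Lemma~\ref{Lemma:TrivDecomp}), $\rho_{23}\in\{0,\pm1,\pm(1+\sqrt2)\}$ and $\rho_{34}\in\{0,\pm1,\pm\sqrt2\}$ (Lemma~\ref{Lemma:2coeff}). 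The observation that drives everything is that \emph{every} admissible value lies in the quadratic subfield $\Q(\sqrt2)$.

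With the three entries $\rho_{12},\rho_{14},\rho_{24}$ equal to $0$, the matrix $G$ acquires an \emph{arrow} shape, as all remaining off-diagonal entries involve the index $3$. Moving the coordinate $3$ to the last position and expanding, I would obtain
\[
\det G=\S\bigl[(2+\sqrt2)(3-\rho_{13}^2)-\rho_{23}^2\bigr]-(2+\sqrt2)\rho_{34}^2.
\]
Setting $u=(2+\sqrt2)(3-\rho_{13}^2)-\rho_{23}^2$ and $v=(2+\sqrt2)\rho_{34}^2$, both of which lie in $\Q(\sqrt2)$, this reads $\det G=\S\,u-v$. A short case check over the admissible pairs $(\rho_{13},\rho_{23})$ shows $u\neq0$: when $\rho_{23}^2\in\{0,1\}$ the coefficient of $\sqrt2$ in $u$ equals $3-\rho_{13}^2\in\{1,2,3\}$, so $u\notin\Q$; and when $\rho_{23}^2=(1+\sqrt2)^2=3+2\sqrt2$ one gets $u=\bigl(2(3-\rho_{13}^2)-3\bigr)+\bigl((3-\rho_{13}^2)-2\bigr)\sqrt2$, which vanishes only if $3-\rho_{13}^2$ equals both $\tfrac32$ and $2$, impossible.

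The key step is then purely field-theoretic. Since $u\neq0$, the equality $\det G=0$ would force $\S=v/u\in\Q(\sqrt2)$. But $\S\in\Q(\sqrt s)\setminus\Q$ (its $\sqrt s$-coefficient is $1$ if $s\equiv3\pmod4$ and $\tfrac12$ if $s\equiv1\pmod4$), and as $s$ is odd, square-free and greater than $1$, we have $\Q(\sqrt2)\cap\Q(\sqrt s)=\Q$; hence $\S\notin\Q(\sqrt2)$, a contradiction. Therefore $\det G\neq0$, so every totally positive semidefinite matrix with the prescribed diagonal is regular, which is exactly the assertion. Note that total positive semidefiniteness enters only through the $2\times2$-minor bounds that pinned down the $\rho_{ij}$; regularity of the resulting matrices is then unconditional. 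Equivalently, one may simply enumerate the finitely many candidate matrices and verify each determinant is nonzero by computer, in the spirit of the programs already used for Lemma~\ref{Lemma:2coeff}.

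I expect the genuine difficulty to lie upstream rather than in this determinant step. The whole argument rests on the coefficient lists of Lemma~\ref{Lemma:2coeff}, whose proof bounds $\omega$ via traces and then clears the finitely many small $s$ by direct computation; this is precisely where the exceptional values $s\in\{3,5,7,11,13,15,17,19,21,29,33\}$ are excluded, and why they must be treated separately. Once those lists are secured, the only care needed here is the verification that $u\neq0$ together with the point that all $\rho_{ij}$ avoid the ``new'' directions $\sqrt s$ and $\sqrt{2s}$, so that $\det G$ stays an element of $\Q(\sqrt2)$ plus a nonzero $\Q(\sqrt2)$-multiple of $\S$, which cannot be zero.
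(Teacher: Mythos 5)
Your proposal is correct and follows essentially the same route as the paper: after pinning down the off-diagonal entries, the paper also expands $\det G$ along the last column as $\Delta\,\S+\alpha$ with $\alpha\in\Q(\sqrt2)$ and $\Delta$ the upper-left $3\times3$ minor (your $u$), and derives the contradiction from $\S\notin\Q(\sqrt2)$. The only difference is cosmetic: the paper verifies $\Delta\neq0$ by computer over the finitely many candidates, whereas you do it by the short explicit case check on $(\rho_{13},\rho_{23})$, which is a perfectly valid replacement.
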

\begin{proof}
From Lemma~\ref{Lemma:2coeff} and the discussion above it we already know all the possible matrices $G$, so it suffices to show that none of them has determinant zero: Observe that $\S$ is the only element in the matrix which does not lie in $\Q(\sqrt{2})$. If we denote by $\Delta$ the determinant of the upper left $3\times 3$ matrix, then expansion of the determinant of $G$ along the last column gives an equality of the form $\Delta \S + \alpha = \det G$ for some $\alpha\in \Q(\sqrt{2})$.

Suppose that $\det G = 0$. By running a computer program we easily check that $\Delta \neq 0$ for all the candidate matrices, so the above equality is a nontrivial linear equation for $\S\notin \Q(\sqrt{2})$ with coefficients in $\Q(\sqrt{2})$; this is a contradiction. 
\end{proof}

By this we have proven the nonexistence of universal ternary forms over $\BQ{2}{s}$ except for eleven specific values of $s$. We postpone $s=3,5, 21, 33$ to Subsection~\ref{Subsec:Specific} and consider all the other remaining values: The only difference for $s=7,11,13,15,17,19,29$ is that we get more possibilities for the coefficient $\rho_{34}$ (i.e., the corresponding fields contain more elements $\omega$ such that $3\mathcal{S}\succcurlyeq\omega^2$), whereas the parts \eqref{Lemma:2coeff-1} and \eqref{Lemma:2coeff-2} of Lemma~\ref{Lemma:2coeff} still hold even for these values of $s$. For some possible choices of $\rho_{34}$ we cannot use the procedure introduced in the proof of Lemma~\ref{lemma:coef2}, as they do not belong to $\Q(\sqrt{2})$. However, now we work in concrete fields, and in each of them, there are only finitely many possibilities for $\rho_{34}$ (compare the traces of $3\S$ and $\rho_{34}^2$); thus, we can compute explicitly all the corresponding $4\times 4$ determinants to check that they are nonzero. This extends Lemma~\ref{lemma:coef2}, and thus also resolves the nonexistence of a universal ternary quadratic form, to all the values of $s$ except for  $\splitatcommas{s=3,5,21,33}$.

Thus, we have proven the following conclusion. 

\begin{conclusion}
In a biquadratic field $K=\BQ{2}{s}$ with $s\neq 3, 5, 21, 33$, no ternary form can simultaneously represent all the numbers $1$, $\M=2+\sqrt{2}$, $3$ and $\S$ at the same time.
\end{conclusion}

\subsection{\texorpdfstring{$\sqrt5 \in K$}{sqrt5 in K}} \label{Subsec:5}
Since we postpone the fields $\BQ25$ and $\BQ35$ (together with a few other fields) to Subsection~\ref{Subsec:Specific},  we shall require $m=5$. We use exactly the same strategy as for $m=2$, which was outlined at the beginning of Subsection~\ref{Subsec:2}. The only difference lies in the exact choice of the four elements to perform the escalation with; this time, we put
\[
\lambda_1=1,\quad \lambda_2=2, \quad \lambda_3=6+\sqrt{5} \quad \text{and} \quad \lambda_4=\mathcal{S}.
\]
Recall that we need to compute all values of $\rho_{ij}$ satisfying $\lambda_i \lambda_j\succcurlyeq \rho_{ij}^2$. Clearly, $\rho_{12}\in\{0,-1,1\}$ and $\rho_{14}=0$ except for $s=3,5$ (see Lemma~\ref{Lemma:NonsquareS}), which we have postponed to the next subsection. The sets for other coefficients are computed in the following lemma. 

\begin{lemma} \label{lemma:5coeff}
Let $s$ be a square-free positive integer such that $5\nmid s$ and $\splitatcommas{s\neq2,3,6,7,13,17,21,29,33,37,53}$, and let $K=\BQ{5}{s}$.
\begin{enumerate}
\item If $6+\sqrt{5}\succcurlyeq\omega^2$ where $\omega\in\OK$, then 
\[
\omega\in\bigg\{0,\pm1,\pm\Big(\frac{1+\sqrt{5}}{2}\bigg),\pm\bigg(\frac{1-\sqrt{5}}{2}\bigg),\pm\bigg(\frac{3+\sqrt{5}}{2}\bigg)\bigg\}.
\] 
\item If $2(6+\sqrt{5})\succcurlyeq\omega^2$ where $\omega\in\OK$, then
\[
\qquad\omega\in\bigg\{0,\pm 1,\pm 2,\pm\sqrt{5},\pm(1+\sqrt{5}),\pm\bigg(\frac{1+\sqrt{5}}{2}\bigg),\pm\bigg(\frac{1-\sqrt{5}}{2}\bigg), \pm\bigg(\frac{3+\sqrt{5}}{2}\bigg),\pm\bigg(\frac{5+\sqrt{5}}{2}\bigg)\bigg\}.
\] 
\item If $2\mathcal{S}\succcurlyeq\omega^2$ where $\omega\in\OK$, then $\omega\in\{0,-1,1\}$.
\item If $(6+\sqrt{5})\mathcal{S}\succcurlyeq\omega^2$ where $\omega\in\OK$, then
\[
\omega\in\bigg\{0,\pm1,\pm\bigg(\frac{1+\sqrt{5}}{2}\bigg),\pm\bigg(\frac{1-\sqrt{5}}{2}\bigg),\pm\bigg(\frac{3+\sqrt{5}}{2}\bigg)\bigg\}.
\]
\end{enumerate}
\end{lemma}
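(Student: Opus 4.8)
The plan is to mimic, almost verbatim, the argument proving Lemma~\ref{Lemma:2coeff}: first bound the coefficients of any admissible $\omega$ by comparing traces, which reduces $\omega$ to the quadratic subfield $\Q(\sqrt5)$ for all but finitely many $s$, and then dispatch the remaining small fields by a direct finite computation. Since $5$ is prime and $5\nmid s$, we always have $t=5s$, so every $\omega\in\OK$ can be written as $\omega=x+y\sqrt5+z\sqrt s+w\sqrt t$ with $x,y,z,w\in\Q$ whose denominators are governed by Remark~\ref{Rem:BasisElements} (halves when $s\not\equiv1\pmod4$, and possibly quarters in the basis (B4) case $s\equiv1\pmod4$). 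One computes $\Tr{K/\Q}{\omega^2}=4\bigl(x^2+5y^2+sz^2+tw^2\bigr)$, whereas the right-hand sides have traces $\Tr{K/\Q}{6+\sqrt5}=24$, $\Tr{K/\Q}{2(6+\sqrt5)}=48$, and, according to whether $\S=\cs+\sqrt s$ or $\S=\tfrac12(\cso+\sqrt s)$, $\Tr{K/\Q}{2\S}\in\{8\cs,\,4\cso\}$ and $\Tr{K/\Q}{(6+\sqrt5)\S}\in\{24\cs,\,12\cso\}$.

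Comparing these against $4(x^2+5y^2+sz^2+tw^2)$ forces $z=w=0$ as soon as $s$ exceeds an explicit bound, since otherwise the terms $sz^2$ and $tw^2=5sw^2$ already exceed the trace of the right-hand side; this is cheapest in parts (1)--(3) (thresholds of the order of a few tens) and most expensive in part (4), where the trace grows like $24\cs$ and the bound is of the order of several hundred. Once $z=w=0$, the element $\omega$ lies in $\OK[\Q(\sqrt5)]$. For parts (1) and (2), the right-hand side already lies in $\Q(\sqrt5)$, so it only remains to solve $6+\sqrt5\succcurlyeq\omega^2$, respectively $2(6+\sqrt5)\succcurlyeq\omega^2$, in the quadratic field; checking both embeddings (that is, applying Lemma~\ref{Lemma:TotPositiveInequalities} within $\Q(\sqrt5)$) produces exactly the listed sets. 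For the product cases (3) and (4), the right-hand side still carries a nonzero $\sqrt s$-coefficient (equal to $2$ and $6$, respectively) after the reduction; applying Lemma~\ref{Lemma:TotPositiveInequalities} to $2\S-\omega^2$, resp.\ $(6+\sqrt5)\S-\omega^2$, this coefficient forces the rational part of $\omega^2$ below $2(\cs-\sqrt s)<2$, resp.\ $6(\cs-\sqrt s)<6$, which pins $\omega$ to the stated finite sets. This also explains why parts (1) and (4) share the same answer: both enumerations are controlled by the bound coming from the $\sqrt s$-coefficient $6$.

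The two delicate points are the following. In the basis (B4) case $s\equiv1\pmod4$, quarter-integer coefficients are permitted, so the trace inequality weakens to a bound on $16(x^2+5y^2+sz^2+tw^2)$, and one must separately exclude the genuinely four-dimensional $\omega$ in which all four coefficients are odd multiples of $\tfrac14$; here the crude estimate $x^2+5y^2+sz^2+tw^2\ge\tfrac1{16}(1+5+s+t)$ against the relevant trace eliminates all large $s$, exactly as in the quarter-integer discussion in the proof of Lemma~\ref{Lemma:SquaresUnder2M}. The second point is the finite residue: the reduction leaves only finitely many small $s$, and it is precisely for the excluded values $s\neq2,3,6,7,13,17,21,29,33,37,53$ (together with a few further genuinely small fields) that an extra admissible $\omega$ can appear outside the listed sets. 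For each such field the claim reduces to enumerating the finitely many $\omega$ of bounded coefficients, which we verify by a Mathematica program just as for Lemma~\ref{Lemma:2coeff}. I expect the main obstacle to be not any single inequality but the bookkeeping of the (B4) quarter-integer case together with this honest finite check for the handful of remaining fields.
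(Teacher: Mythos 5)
Your proposal follows the same route as the paper: the paper's proof of this lemma is a short remark deferring to the argument for Lemma~\ref{Lemma:2coeff} (compare traces to force $\omega\in\Q(\sqrt{5})$ for all large $s$, then finish by checking inequalities, with a computer handling the finitely many small fields). Your write-up is a correct and somewhat more explicit rendering of exactly that strategy, including the (B4) quarter-integer caveat.
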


\begin{proof}
We can use analogous argumentation as for $m=2$: In all four cases, comparing traces yields an inequality which clearly allows only finitely many choices of $\omega$; moreover, for $s$ sufficiently large, $\omega \in \Q(\sqrt{m})$ can be proven. The rest is only a straightforward checking of inequalities, readily handled by a computer program.
\end{proof}

The previous lemma together with the knowledge of possible choices for $\rho_{12}$ and $\rho_{14}$ allows us to prove that none of the resulting Gram matrices has determinant zero by the same small trick as in Lemma~\ref{lemma:coef2}. Thus, under the assumptions of the previous lemma, we cannot find any ternary universal quadratic form over $\OK$.

If $s=7$, we obtain more choices of $\omega$ for 
$2(6+\sqrt{5})$, while the other sets of coefficients remain unchanged; for $s=29,37,53$, we get a larger set only in part (4), i.e., for $(6+\sqrt5)\S$. However, even after including these modifications, the regularity of all the resulting $4\times 4$ matrices over these concrete fields can be checked by a direct computation. Thus, we have proven the following statement.

\begin{conclusion}
In a biquadratic field $K$ where $m=5$ and $\splitatcommas{s\neq 6, 13, 17, 21, 33}$, no ternary form can simultaneously represent all the numbers $1$, $2$, $6+\sqrt{5}$ and $\S$ at the same time.
\end{conclusion}

It remains to solve the cases when $\splitatcommas{s=2,3,6,13,17,21,33}$. Of course, $s=2,3$ is absurd if $m=5$. Moreover, the cases with $s=6, 21, 33$ have already been solved in Subsection~\ref{Subsec:SqrtsWithUnits}. The only remaining fields $\BQ{5}{13}$ and $\BQ{5}{17}$ will be handled in Subsection~\ref{Subsec:Specific}.

\subsection{Remaining biquadratic fields} \label{Subsec:Specific}
For the remaining biquadratic fields, we use the procedure introduced in Subsection~\ref{Subsec:2}. We find elements $\lambda_1, \lambda_2, \lambda_3, \lambda_4 \in\OKPlus$ such that every symmetric totally positive semidefinite matrix over $\OK$ with them on the diagonal is regular, thereby showing that no ternary form can represent them all. In fact, we put $\lambda_1=1$ and conveniently choose the other $\lambda_i$'s; if possible, we prefer indecomposable integers in these fields. Consequently, by solving inequalities $\rho_{ij}^2 \preccurlyeq \lambda_i\lambda_j$ with the help of a computer (upper bounds on coefficients $\rho_{ij}^2$ are, as always, obtained by comparing traces, so there are only finitely many values to be checked), we obtain all possible non-diagonal entries of totally positive semidefinite matrices with the given diagonal. To complete the proof, it suffices to compute all the determinants. In Table~\ref{Tab:specific}, we exhibit the quadruples of $\lambda_i$'s which we used and for which all the resulting candidates for totally positive semidefinite Gram matrices are regular. It implies that even in these fields, there cannot exist a ternary universal quadratic form, and we even explicitly know four elements that are never represented by the same form. Note that $\BQ{2}{3}$ was already solved in \cite[Subsec.~5.1]{CLSTZ} by the same method; here we list the diagonal coefficients which were used in that article.  

\begin{table}[ht]
\begin{tabular}{|c|c|c|c|c|c|}
\hline 
$m$ & $s$ & $t$ & $\lambda_2$ & $\lambda_3$ & $\lambda_4$\\
\hline
\hline 
 $2$ & $3$ & $6$ & $4+\frac{5}{2}\sqrt{2}+2\sqrt{3}+\frac{3}{2}\sqrt{6}$ & $3+\sqrt{6}$ & $3-\frac{3}{2}\sqrt{2}-\sqrt{3}+\frac{1}{2}\sqrt{6}$\\
\hline
$2$ & $5$ & $10$ & $2+\sqrt{2}$ & $3$ & $\frac{5}{2}+\frac{1}{2}\sqrt{2}+\frac{1}{2}\sqrt{5}+\frac{1}{2}\sqrt{10}$\\
\hline
$2$ & $21$ & $42$ & $2+\sqrt{2}$ & $\frac{5}{2}+\frac{1}{2}\sqrt{21}$ & $5+\frac{5}{2}\sqrt{2}+\sqrt{21}+\frac{1}{2}\sqrt{42}$\\
\hline
$2$ & $33$ & $66$ & $2+\sqrt{2}$ & $\frac{7}{2}+\frac{1}{2}\sqrt{33}$ & $6+\sqrt{33}$\\
\hline
$3$ & $5$ & $15$ & $2+\sqrt{3}$ & $\frac{5}{2}+\frac{1}{2}\sqrt{3}+\frac{1}{2}\sqrt{5}+\frac{1}{2}\sqrt{15}$ & $3+\frac{1}{2}\sqrt{3}+\frac{1}{2}\sqrt{15}$\\
\hline
$5$ & $13$ & $65$ & $\frac{5}{2}+\frac{1}{2}\sqrt{13}$ & $4+\sqrt{13}$ & $\frac{13}{4}+\frac{3}{4}\sqrt{5}+\frac{3}{4}\sqrt{13}+\frac{1}{4}\sqrt{65}$\\
\hline
$5$ & $17$ & $85$ & $\frac{5}{2}+\frac{1}{2}\sqrt{17}$ & $\frac{29}{2}+\frac{7}{2}\sqrt{17}$ & $\frac{13}{4}+\frac{1}{4}\sqrt{5}+\frac{1}{4}\sqrt{17}+\frac{1}{4}\sqrt{85}$ \\
\hline

\end{tabular}
\caption{Chosen diagonal coefficients}\label{Tab:specific}
\end{table}
By this, the proof of Theorem~\ref{Thm:Main} is finished.

\section*{Open questions}
Although we achieved our goal and proved that no totally real biquadratic field admits a ternary universal quadratic form, there are many very natural open problems. One may want to generalize this result to non-classical forms; however, the corresponding problem is, to our best knowledge, still open even for the much simpler case of quadratic fields. More interesting question is, which (if any) biquadratic fields admit a quaternary universal form, and more generally, what is the lowest number of variables $n$ such that there is a biquadratic field which admits an $n$-ary universal form. And for which $n$ are there infinitely many such fields? Tools developed in this paper should prove useful in investigating these problems. A bolder generalization of our result, namely the nonexistence of universal ternary forms for all totally real multiquadratic fields, would provide strong evidence towards Kitaoka's conjecture.

Theorem~\ref{Thm:Inde} significantly extended our knowledge about indecomposable integers in biquadratic fields; however, there is a very natural open question: Is it possible for an element from the subfields $\Q(\sqrt{m})$ or $\Q(\sqrt{s})$ to decompose in the biquadratic extension, or can this happen only for the subfield $\Q(\sqrt{t})$, as evidence seems to suggest?

%------------------------------------------------------------------------ ----------------------

\section*{Acknowledgment}
The authors wish to express their thanks to Martin \v{C}ech, who participated in the early stages of the research.    
A special thanks belongs to Wai Kiu Chan for his quick answer regarding Appendix~\ref{App:sqrt10}.
The authors are also greatly indebted to V\'it\v{e}zslav Kala for suggesting the problem and for his comments, to Ond\v{r}ej Draganov for his help with programs in Mathematica, and to Alexander Sl\'avik for one brief but fruitful consultation. 

%----------------------------------------------------------------------------------------------

\newpage

%-----------------------------------------------------------------------
\newpage
\appendix

\section{Ternary forms over the quadratic field \texorpdfstring{$\Q(\sqrt{10})$}{Q(sqrt10)}} \label{App:sqrt10}

The main source of inspiration for this article is the beautiful paper \cite{CKR}, which proves that real quadratic fields except for $\Q(\sqrt2)$, $\Q(\sqrt3)$ and $\Q(\sqrt5)$ admit no universal ternary forms, and fully describes universal ternary forms over those three fields. We have found two discrepancies in the proof of nonuniversality given there; this appendix is devoted to their corrections. Both of the problems arise when dealing with a nondiagonalizable form $\uqf{1} \perp Q_0$ (denoted there as $\uqf{1} \perp L_0$; the analogous part in this paper is Subsection~\ref{Subsec:CaseIIInondiag}) over a totally real quadratic field $F$. Just as in our proof, $\e\in\OF^2$ is a vector such that $Q_0(\e)=2$, and $\f\in F^2$ is a \uv{complementing vector} such that $\OF = \indx{\e}\e\oplus\indx{\f}\f$ and $\indx{\e}=\indx{\f}^{-1}$. 

\bigskip

The first problem was the claim that $\gamma=Q_0(\f) \in \OF$; that is not always the case, as we shall see in Example~\ref{Example:sqrt10}. However, for the rest of the proof, it is sufficient  that $2\gamma \in \OF$; this we have proven in Lemma~\ref{Lemma:Integrity}. (We also show there that $\beta = B_{Q_0}(\e,\f)$ belongs to $\OF$, which was stated but not proven in \cite{CKR}.)

\begin{example} \label{Example:sqrt10}
In $F=\Q(\sqrt{10})$, consider the binary form 
\begin{equation} \label{Eq:sqrt10QF}
Q_0(v,w) = 7v^2 - 2 \cdot 3\sqrt{10}vw + 13 w^2.
\end{equation}
Note that it is totally positive definite, since $7 \in \OFPlus$ and $\det Q_0 = 7 \cdot 13 - (3\sqrt{10})^2 = 1 \in \OFPlus$. We show that $Q_0$ is not diagonalizable: If it is, then by comparing determinants it follows that $Q_0 \cong \bqf{1}{1}$ (using the fact that, in this field, $\UFPlus = \UFctv$); however, $Q_0$ represents $7$ while it is easy to check that $\bqf{1}{1}$ does not.

For $\e = (\sqrt{10},2)^{\mathrm{T}}$ we see that $Q_0(\e)= 2$
and $\indx{\e} = (\sqrt{10}, 2)^{-1} = \bigl(\frac{\sqrt{10}}{2}, 1\bigr)$.
Set $\f = \bigl(1, 1 + \frac{\sqrt{10}}{2}\bigr)^{\mathrm{T}}$; clearly $\indx{\f} = (\sqrt{10},2)$, so indeed $\indx{\e}\indx{\f} = \OF$, and it is routine to check $\indx{\e}\e + \indx{\f}\f = \OF^2$. Further, we compute
\[ \gamma = Q_0(\f) = \frac{45}{2} + 7\sqrt{10}, 
\hspace{5mm}
\beta = B_{Q_0}(\e,\f) = -4 - \sqrt{10};\]
obviously, $\gamma\notin\OF$ (but $2\gamma \in \OF$) and $\beta\in\OF$.

On a side note, there are altogether four vectors which represent $2$, namely $\pm (\sqrt{10}, 2)^{\mathrm{T}}$ and $\pm (4,\sqrt{10})^{\mathrm{T}}$; the property $\indx{\e} = (\sqrt{10},2)^{-1}$ holds for all of them.
\end{example}

\bigskip

The second problem in the paper \cite{CKR} is that the proof is not completely correct in the case of $F=\Q(\sqrt{10})$. The problematic claim is: \emph{If the number $10$ is represented by the form $2Q_0$ as $2Q_0(y\e+z\f) = (2y+\beta z)^2 + (2\gamma-\beta^2)z^2$, then $(2y+\beta z)^2$ is necessarily a square of an integer} (compare with Lemma~\ref{Lemma:10and8}). This is not true; it can also happen that $(2y+\beta z)^2 = (\pm \sqrt{10})^2$.

Note that $2y+\beta z = \pm\sqrt{10}$ and $(2\gamma-\beta^2)z^2 = 0$ if and only if $z=0$ and $y = \pm \frac{\sqrt{10}}{2}$. Since $y \in \indx{\e}$, this implies $\indx{\e}\neq\OF$. Moreover, $(\sqrt{10}, 2)$ is a prime ideal satisfying $(\sqrt{10}, 2)^2 = (2)$; since the square of the ideal $\indx{\f} = \indx{\e}^{-1}$ divides $(2)$ and here we have $\indx{f}\neq\OF$, it follows that necessarily $\indx{\f} = (\sqrt{10},2)$. Then $\indx{\e} = (\sqrt{10},2)^{-1} = \bigl(\frac{\sqrt{10}}{2},1\bigr)$, hence indeed $\pm \frac{\sqrt{10}}{2}\in\indx{e}$, and the vector $\pm\frac{\sqrt{10}}{2}\e + 0\f$ belongs to $\OF^2$.

\begin{remark} \label{Rem:sqrt10Ie}
In the above paragraph, we have actually proven the following: If $Q_0$ is a (totally positive definite, classical) binary form over $\Q(\sqrt{10})$ and $\vct{e}\in\OF^2$ is such that $Q_0(\vct{e})=2$ and $\indx{e}\neq\OF$, then $\indx{e}=(\sqrt{10},2)^{-1}$.
\end{remark}

Now we proceed to illustrate the problem on the form $Q_0$ given in the example above.

\begin{example}
Let us continue with Example~\ref{Example:sqrt10}; we consider the form $Q_0$ given by \eqref{Eq:sqrt10QF}, $\vct{e}=(\sqrt{10},2)^{\mathrm{T}}$ and $\vct{f}=\bigl(1, 1+\frac{\sqrt{10}}{2}\bigr)^{\mathrm{T}}$. For $y\in\indx{e}$ and $z\in\indx{f}$, we compute
\[Q_0(y\e+z\f) = 2y^2 + 2\beta yz + \gamma z^2 = 2y^2 + 2 (-4-\sqrt{10})yz  + \Bigl(\frac{45}{2} + 7\sqrt{10}\Bigr)z^2;\]
thus,
\[2Q_0(y\e+z\f) = (2y+\beta z)^2 + (2\gamma-\beta^2)z^2 = \bigl(2y + (-4-\sqrt{10})z\bigr)^2 + (19+6\sqrt{10})z^2.\]
By plugging in $y = \pm \frac{\sqrt{10}}{2}$ and $z = 0$, we indeed get $10 = (\sqrt{10})^2 + 0$.
\end{example}

Actually, for an arbitrary form $Q_0$, if $2$ is represented by $Q_0$ by a vector $\e$ such that $\indx{\e} \neq \OF$, then $5$ is automatically represented by $Q_0$ as well, because we have just seen that $\frac{\sqrt{10}}{2}\e \in \OF^2$, and 
\[
Q_0\biggl(\frac{\sqrt{10}}{2}\e\biggr) = \biggl(\frac{\sqrt{10}}{2}\biggr)^2 \cdot Q_0(\e) = \frac{10}{4} \cdot 2 = 5.
\]
Therefore, if $\indx{\e} \neq \OF$, no new information can be gained by considering representations of $10$ by $2Q_0$. Instead of that, we have to consider another element of $\OFPlus$ to show the nonexistence of a universal form.

\begin{proposition}
No ternary form $\uqf{1} \perp Q_0$ over $F=\Q(\sqrt{10})$ such that $Q_0(\e) = 2$ for a vector $\e\in\OF^2$ with $\indx{e}\neq\OF$ is universal; in fact, such a form never represents $\M = 4 + \sqrt{10}$. 
\end{proposition}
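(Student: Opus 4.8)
The plan is to assume for contradiction that $Q=\uqf1\perp Q_0$ represents $\M=4+\sqrt{10}$ and to exploit the rigidity forced by the hypothesis $\indx{e}\neq\OF$. First I would record that $\M$ is indecomposable in $F=\Q(\sqrt{10})$ (it is a semiconvergent of $\sqrt{10}$, cf.\ Subsection~\ref{Subsec:NewIndeElms} and Proposition~\ref{Prop:IndecM}) and is not a square there (Lemma~\ref{Lemma:NonsquareM}). Hence, writing a hypothetical representation as $\M=x^2+Q_0(\vct v)$ with $x\in\OF$ and $\vct v\in\OF^2$, total positive definiteness forces $x=0$: if $x\neq0$ then either $Q_0(\vct v)\succ0$, giving a nontrivial decomposition of the indecomposable $\M$, or $Q_0(\vct v)=0$, giving $\M=x^2$ against nonsquareness. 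Thus $Q_0$ itself represents $\M$, so $2Q_0$ represents $2\M$, and by \eqref{Eq:2Q0} there are $\tilde y\in\indx{e}$, $\tilde z\in\indx{f}$ with
\[
2\M=(2\tilde y+\beta\tilde z)^2+(2\gamma-\beta^2)\tilde z^2.
\]
By Remark~\ref{Rem:sqrt10Ie} we have $\indx{f}=\p:=(\sqrt{10},2)$, the ramified prime above $2$, and by Lemma~\ref{Lemma:Integrity} both $\Delta:=2\gamma-\beta^2\in\OFPlus$ and $2\tilde y+\beta\tilde z\in\indx{f}=\p$.

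Next I would pin down $(2\tilde y+\beta\tilde z)^2$. Comparing traces in $2\M\succcurlyeq\omega^2$ for $\omega=a+b\sqrt{10}\in\OF$ yields $a^2+10b^2\le 8$, so $b=0$ and $\omega^2\in\{0,1,4\}$; total positivity of $2\M-\omega^2$ excludes $\omega^2=4$, leaving $\omega^2\in\{0,1\}$ (this is the $\Q(\sqrt{10})$ instance of Lemma~\ref{Lemma:SquaresUnder2M}). The value $(2\tilde y+\beta\tilde z)^2=1$ is impossible here, since $2\tilde y+\beta\tilde z$ lies in the proper ideal $\p$ and therefore cannot be a unit. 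Consequently $2\tilde y+\beta\tilde z=0$, and the problem reduces to the single equation $\Delta\tilde z^2=2\M$ with $\Delta\in\OFPlus$ and $\tilde z\in\p$.

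The main point, and the only genuine obstacle, is to rule out this last equation; a naive norm count only gives $\norm{F/\Q}{\Delta}=6$, so one really needs the exact ideal factorization together with the \emph{non-principality} of $\p$. I would pass to ideals: from $(2)=\p^2$ and $\norm{F/\Q}{4+\sqrt{10}}=6$ with $4+\sqrt{10}\in\p\setminus\p^2$, we obtain $(4+\sqrt{10})=\p\mathfrak q$ for a prime $\mathfrak q$ of norm $3$, whence $(2\M)=\p^3\mathfrak q$. Comparing $\p$-adic valuations in $(\Delta)(\tilde z)^2=\p^3\mathfrak q$, the even number $2v_\p((\tilde z))\ge2$ must equal $3-v_\p((\Delta))\le3$, forcing $v_\p((\tilde z))=1$; since the exponent of $\mathfrak q$ on the right is odd we get $v_{\mathfrak q}((\tilde z))=0$, and all remaining valuations of $(\tilde z)$ vanish. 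Hence $(\tilde z)=\p$, so $\p$ would be principal. But $\p=(\sqrt{10},2)$ is non-principal: a generator would have norm $\pm2$, and $a^2-10b^2=\pm2$ has no solution modulo $5$. This contradiction shows that $\M$ is never represented, completing the proof.
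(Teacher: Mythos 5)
Your argument is correct and follows the same skeleton as the paper's proof: reduce via indecomposability and nonsquareness of $\M$ to a representation $2\M=(2\tilde y+\beta\tilde z)^2+\Delta\tilde z^2$, use $2\tilde y+\beta\tilde z\in\indx{f}=(\sqrt{10},2)$ to kill the possibility $(2\tilde y+\beta\tilde z)^2=1$, and then derive a contradiction from $\Delta\tilde z^2=2\M$. The only divergence is the endgame: you factor $(2\M)=\p^3\mathfrak q$ into prime ideals, deduce $(\tilde z)=\p$ by comparing valuations, and invoke non-principality of $\p$; the paper instead takes norms of $\Delta\tilde z^2=2\M$ to get $\norm{F/\Q}{\Delta}\norm{F/\Q}{\tilde z}^2=24$ and uses $2\mid\norm{F/\Q}{\tilde z}$ (which follows from $\tilde z\in\p$) to force $\norm{F/\Q}{\tilde z}=\pm2$. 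So your remark that ``a naive norm count only gives $\norm{F/\Q}{\Delta}=6$'' slightly undersells the norm approach -- the parity of $\norm{F/\Q}{\tilde z}$ is exactly the extra input that makes it close. Both routes terminate in the same arithmetic fact, the non-solvability of $x^2-10y^2=\pm2$ modulo $5$; yours is marginally longer but makes the role of the ramified prime more transparent.
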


\begin{proof}
Since $\indx{e}\neq\OF$, Remark~\ref{Rem:sqrt10Ie} implies $\indx{\e} = (\sqrt{10},2)^{-1}$.  Suppose that $\uqf{1} \perp Q_0$ represents $\M$. Just as in the proof of Theorem~\ref{Thm:Main}, since $\M$ is indecomposable and nonsquare, it has to be represented by the binary form $Q_0$, and therefore $2\M$ is represented by $2Q_0$; using the representation $2Q_0(y\e + z\f) = (2y+\beta z)^2 + (2\gamma-\beta^2)z^2$, we see that $2\M$ decomposes as a sum of a square and a totally nonnegative number. Lemma~\ref{Lemma:SquaresUnder2M} (or a simple direct computation) shows that the square is either $0$ or $1$. However, since $2 \in \indx{\e}^{-2}$, $y \in \indx{\e}$, $\beta\in\OF$ and $z \in \indx{\e}^{-1}$, we get $2y+\beta z \in \indx{\e}^{-1} = (\sqrt{10},2)$, which is a nontrivial ideal, so it cannot contain $\pm 1$.

Therefore, the only choice is $2y+\beta z = 0$ and $(2\gamma-\beta^2)z^2 = 2\M$; denote $\Delta = 2\gamma-\beta^2$. Observe that since $z\in (\sqrt{10},2)$, we get $\norm{F/\Q}{z}\in 2\Z$. It is also easy to compute the norm $\norm{F/\Q}{\M}=\norm{F/\Q}{4+\sqrt{10}}=6$. Now let us apply norms on the equation $\Delta z^2 = 2\M$: We obtain 
\[
\norm{F/\Q}{\Delta}\norm{F/\Q}{z}^2 = 4\cdot 6.
\]
Since this equals $2^3\cdot 3$, it means that $\norm{F/\Q}{z}$ can only be $\pm1$ or $\pm2$; since the former possibility is excluded by our observation that the norm of $z$ is even, it must hold that $\norm{F/\Q}{z} = \pm 2$. However, there is no element of norm $\pm 2$ in $F$: The equality $x^2-10y^2=\pm 2$ would mean $x^2 \equiv \pm 2 \pmod{5}$, which is a contradiction.
\end{proof}

%----------------------------------
\section{Units in biquadratic fields}
\label{Subsec:Units}

In this appendix, we present some results on units in totally real biquadratic fields. In particular, we are interested in the existence of a nonsquare totally positive unit. If such a unit exists, then  one of the simplest parts of the proof of Theorem~\ref{Thm:Main} contained in Subsections~\ref{Subsec:CaseI} and~\ref{Subsec:CaseIInonsquare} is applicable. Indeed, as we will see, this is often the case.

The units of a quadratic field are well understood: A quadratic field $\Q(\sqrt{n})$ contains a nonsquare totally positive unit if and only if its fundamental unit $\ve_n$ has (quadratic) norm $+1$ (in which case \emph{every} unit is either totally positive or totally negative). That happens for almost all fields -- a necessary condition for $\norm{\Q(\sqrt{n})/\Q}{\ve_n} = -1$ is that $n$ is divisible by no prime of the form $4k+3$, because clearly $-1$ must be a quadratic residue modulo every divisor of $n$. 

The situation in a biquadratic field is a bit more difficult (see, e.g., \cite{MU}). Note that a totally positive fundamental unit of a quadratic subfield may become a square in the biquadratic field; in such a case, its square root is not totally positive anymore (this follows from Lemma~\ref{Lemma:MartinSquares}(\ref{Lemma:MartinSquaresItem1}), since no element of zero trace can be totally positive).

We start by showing that in most biquadratic fields, at least one nonsquare totally positive unit exists. Remember that the meaning of $m,s,t$ and $n_1,n_2,n_3$ is fixed by Convention \ref{Conv}.

\begin{lemma}
\label{Lemma:RootsOfUnits} 
If $K=\BQ{m}{s}$, then:
 \begin{itemize}
     \item the fundamental unit of $\Q(\sqrt{m})$ is not a square in $K$;
     \item if $m\neq 2$, then the fundamental unit of $\Q(\sqrt{s})$ is not a square in $K$.
 \end{itemize}
 More generally, let $K$ be a biquadratic number field. If $\gcd(n_2,n_3)\geq 3$, then the fundamental unit of $\Q(\sqrt{n_1})$ is not a square in $K$. 
\end{lemma}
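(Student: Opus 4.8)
The plan is to reduce everything to the general statement and then read off the two bullet points from the arithmetic of $m_0,s_0,t_0$. The central tool is Corollary~\ref{Corollary:BecomingSquare}: an element of $\OK[\Q(\sqrt{n_1})]$ which is not a square in $\Q(\sqrt{n_1})$ but becomes a square in $K$ must be divisible by every odd divisor of $\gcd(n_2,n_3)$. First I would record the standard fact that a fundamental unit $\varepsilon_{n_1}$ of $\Q(\sqrt{n_1})$ is never a square in $\Q(\sqrt{n_1})$: if $\varepsilon_{n_1}=\gamma^2$ with $\gamma\in\Q(\sqrt{n_1})$, then $\gamma\in\OK[\Q(\sqrt{n_1})]$ because $\OK$ is integrally closed, and comparing norms gives $\norm{\Q(\sqrt{n_1})/\Q}{\gamma}=\pm1$, so $\gamma$ would itself be a unit, contradicting the minimality of $\varepsilon_{n_1}$.

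With this in hand, suppose toward a contradiction that $\varepsilon_{n_1}$ becomes a square in $K$. Since it is not a square in $\Q(\sqrt{n_1})$, Corollary~\ref{Corollary:BecomingSquare} applies and forces every odd divisor of $\gcd(n_2,n_3)$ to divide $\varepsilon_{n_1}$. But $\varepsilon_{n_1}$ is a unit, hence divisible by no rational prime, so $\gcd(n_2,n_3)$ can have no odd prime divisor. Now I would observe that $\gcd(n_2,n_3)$ is square-free, being the greatest common divisor of two square-free integers (any prime dividing it divides each of $n_2,n_3$ exactly once). A square-free positive integer with no odd prime divisor lies in $\{1,2\}$; hence $\gcd(n_2,n_3)\le 2$. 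Contrapositively, if $\gcd(n_2,n_3)\ge 3$, then $\varepsilon_{n_1}$ is not a square in $K$, which is the general claim.

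For the two bullet points I would use the factorisation $m=s_0t_0$, $s=m_0t_0$, $t=m_0s_0$ with $m_0,s_0,t_0$ pairwise coprime and $m_0>s_0>t_0\ge 1$, which gives $\gcd(s,t)=m_0$ and $\gcd(m,t)=s_0$. For the first bullet, take $n_1=m$ and $\{n_2,n_3\}=\{s,t\}$: since $m_0>s_0>t_0\ge 1$ forces $m_0\ge 3$, the general statement immediately yields that the fundamental unit of $\Q(\sqrt{m})$ is not a square in $K$. For the second bullet, take $n_1=s$ and $\{n_2,n_3\}=\{m,t\}$, so that $\gcd(n_2,n_3)=s_0$; here $s_0\ge 2$ always, and $s_0=2$ would force $t_0=1$ and hence $m=s_0t_0=2$. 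Thus $m\ne 2$ gives $s_0\ge 3$, and the general statement applies to show that the fundamental unit of $\Q(\sqrt{s})$ is not a square in $K$.

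The proof has essentially no computational core, since Corollary~\ref{Corollary:BecomingSquare} does all the heavy lifting. The only points demanding care are the two small number-theoretic observations -- that $\gcd(n_2,n_3)$ is square-free, and that a square-free integer fails to have an odd prime factor exactly when it equals $1$ or $2$ -- together with the translation of the hypothesis $m\ne 2$ into the inequality $s_0\ge 3$. It is precisely the exceptional value $\gcd(n_2,n_3)=2$, a prime power carrying no odd divisor, that is responsible for the excluded case $m=2$ in the second bullet, so this is the only place where the argument is genuinely delicate.
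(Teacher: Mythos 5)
Your proposal is correct and follows essentially the same route as the paper: the general claim is deduced from Corollary~\ref{Corollary:BecomingSquare} by noting that $\gcd(n_2,n_3)$ is square-free, so $\gcd(n_2,n_3)\ge 3$ forces a nontrivial odd divisor, which a unit cannot have. Your derivation of the two bullet points via the factorisation $m=s_0t_0$, $s=m_0t_0$, $t=m_0s_0$ is just a rephrasing of the paper's observation that $\gcd(n_2,n_3)\le 2$ forces $n_1=t$ unless one of $n_2,n_3$ equals $2$, and your explicit norm argument that a fundamental unit is not a square in its own quadratic field fills in a step the paper leaves implicit.
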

\begin{proof}
The last, most general statement follows directly from Corollary~\ref{Corollary:BecomingSquare}: Since $\gcd(n_2,n_3)\geq3$ is square-free, it has some nontrivial odd divisor. However, no unit can have a nontrivial rational integer divisor.

The claims about $\Q(\sqrt{m})$ and $\Q(\sqrt{s})$ are simple corollaries: If $\gcd(n_2,n_3)\leq 2$, then it holds that $n_1=\frac{n_2n_3}{\gcd(n_2,n_3)^2} > n_2,n_3$ (i.e., $n_1=t$) unless one of $n_2$, $n_3$ equals $2$.
\end{proof}
Note that although Lemma~\ref{Lemma:RootsOfUnits} shows that in most cases, $\ve_m$ as well as $\ve_s$ are nonsquare units (and we already know that in most cases they are totally positive), it does \emph{not} mean that then there are at least two independent nonsquare totally positive units in $\OK$. It is quite possible that $\ve_m\ve_s$ is a square.

Another important piece of information for the proof of Theorem~\ref{Thm:Main} is whether $2\ve$ is a square or not. Note that since in a quadratic field the fundamental unit differs from any other positive nonsquare unit only by multiplication by a square, the following lemma actually generalizes to any positive nonsquare unit:

\begin{lemma}
\label{Lemma:RootsOf2epsilon}
Let $F=\Q(\sqrt{n_1})$ and $\ve\in F$ be the fundamental unit.
\begin{itemize}
    \item If $n_1\neq 2$, then $2\ve$ is a square in $F$ if and only if $\OK[F]$ contains an element of (quadratic) norm $\pm2$ with integer coefficients, i.e., if and only if at least one of the Pell's equations $x^2-n_1y^2=\pm2$ is solvable.
    \item Suppose $2\ve$ is not a square in $F$, and the biquadratic field $K \supseteq F$ satisfies $\gcd(n_2,n_3)\geq 3$. Then $2\ve$ is not a square in $K$ either.
\end{itemize}
\end{lemma}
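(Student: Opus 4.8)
The plan is to treat the two bullets separately, but both hinge on the single computation $\norm{F/\Q}{2\ve}=4\norm{F/\Q}{\ve}=\pm4$ together with the fact that a square has nonnegative norm. For the first bullet I would first dispose of the trivial equivalence: an element of $\OK[F]$ with integer coefficients is just $x+y\sqrt{n_1}$ (automatically integral), and its norm is exactly $x^2-n_1y^2$, so "contains an element of norm $\pm2$ with integer coefficients" and "one of $x^2-n_1y^2=\pm2$ is solvable" say the same thing. The real content is the equivalence of these with "$2\ve$ is a square in $F$."

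For the direction $2\ve=\delta^2\Rightarrow$ Pell solvable, I would take norms to get $\norm{F/\Q}{\delta}^2=4\norm{F/\Q}{\ve}$; nonnegativity of the left side forces $\norm{F/\Q}{\ve}=1$ and $\norm{F/\Q}{\delta}=\pm2$. The one genuinely delicate point is to rule out half-integer coefficients when $n_1\equiv1\pmod4$: writing $\delta=\frac{a+b\sqrt{n_1}}2$ with $a\equiv b\pmod2$, the coefficient of $\sqrt{n_1}$ in $\delta^2$ is $\frac{ab}2$, whereas $2\ve$ manifestly has an \emph{integer} coefficient at $\sqrt{n_1}$ (in both basis cases $2\ve$ has integral coordinates); hence $ab$ is even, and since $a\equiv b\pmod2$ both are even, so $\delta=x+y\sqrt{n_1}$ gives a genuine Pell solution. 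Conversely, given $\delta=x+y\sqrt{n_1}$ of norm $\pm2$, I would put $u=\delta^2/2$, note $\tfrac{x^2+n_1y^2}2=\pm1+n_1y^2\in\Z$ so that $u\in\OK[F]$, and observe $\norm{F/\Q}{u}=1$, so $u$ is a totally positive unit. The step needing care is to pass from "$2u$ is a square" to "$2\ve$ is a square": using that $\ve>1$ with $\norm{F/\Q}{\ve}=+1$ is totally positive and that $\UFPlus/\UFctv$ has order $2$, the unit $u$ must be nonsquare (otherwise $\sqrt2\in F$, contradicting $n_1\neq2$), hence $u=\ve w^2$ for some unit $w$, and then $2\ve=(\delta/w)^2$. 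I would also record that $\norm{F/\Q}{\ve}=-1$ is incompatible with such a $\delta$ (all totally positive units are then squares, again yielding $\sqrt2\in F$), which keeps both sides of the biconditional vanishing together in that degenerate case.

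For the second bullet the argument is shorter and essentially copies the philosophy of Lemma~\ref{Lemma:RootsOfUnits}. Assuming $2\ve$ is not a square in $F$ but becomes one in $K$, I would apply Corollary~\ref{Corollary:BecomingSquare} with $\alpha=2\ve$: every odd divisor of $\gcd(n_2,n_3)$ then divides $2\ve$ in $\OK[F]$. Since $\gcd(n_2,n_3)\geq3$ is square-free it has an odd prime factor $\ell\geq3$, so $\frac{2\ve}{\ell}\in\OK[F]$; but then $\norm{F/\Q}{2\ve/\ell}=\pm4/\ell^2\notin\Z$, which is impossible for an algebraic integer. This contradiction shows $2\ve$ remains a nonsquare in $K$.

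The main obstacle I anticipate is entirely in the first bullet: it is the bookkeeping that makes the biconditional exact rather than merely "up to squares." Concretely, the integer-coefficient refinement (excluding half-integer square roots) and the careful tracking of totally positive units through the cases $\norm{F/\Q}{\ve}=\pm1$ are where a naive argument would slip; the norm-$\pm4$ computation itself and the entire second bullet are routine by comparison.
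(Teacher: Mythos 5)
Your proposal is correct and follows essentially the same route as the paper's proof: the same parity argument excluding half-integer coefficients in the square root, the same norm computation $\norm{F/\Q}{\sqrt{2\ve}}=\pm2$ for the forward direction, the same construction of the unit $\tfrac12\alpha^2$ and comparison with powers of $\ve$ for the converse (the paper writes $\alpha^2=2\ve^k$ and forces $k$ odd, which is just your $\UFPlus/\UFctv$ argument in different clothing), and the same appeal to Corollary~\ref{Corollary:BecomingSquare} plus the absence of odd rational divisors of $2\ve$ for the second bullet. Your explicit treatment of the $\norm{F/\Q}{\ve}=-1$ case is a harmless addition that the paper leaves implicit.
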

\begin{proof}
The second part is a simple consequence of Corollary~\ref{Corollary:BecomingSquare}
(clearly $2\ve$ cannot have any odd rational integer divisors). As for the first one, start by observing that for an element $\frac{a}{2}+\frac{b}{2}\sqrt{n_1}$ with $a,b$ odd integers, $n_1 \equiv 1 \pmod{4}$, its square is of the same form. Thus, since $2\ve$ has always integer coefficients, its potential square root must have the form $a'+b'\sqrt{n_1}$ for some $a',b'\in\Z$.

If $\sqrt{2\ve}$ exists in $\OK[F]$, then clearly $\norm{F/\Q}{\sqrt{2\ve}}=\pm2$, which proves one implication. As for the other, take $\alpha=x+\sqrt{n_1}y$ where $\norm{F/\Q}{\alpha}=x^2-n_1y^2=\pm2$. Then \[\alpha^2 = (x^2+n_1y^2)+2xy\sqrt{n_1} = (\pm2+2n_1y^2)+2xy\sqrt{n_1},\] 
thus $\frac{1}{2}\alpha^2 \in\OK[F]$. Moreover, $\frac{1}{2}\alpha^2$ is a unit, thus $\alpha^2 = 2\ve^k$ for some $k\in\Z$ (it cannot be $-2\ve^k$, since $\alpha^2$ is positive whereas $-2\ve^k$ is negative due to $\ve$ being fundamental). Clearly, if $k$ was even, then $2$ would be a square, i.e., $n_1=2$. Therefore $k=2l+1$ and we have $2\ve = (\alpha\ve^{-l})^2$.
\end{proof}

From the previous two lemmas we can make a simple but strong conclusion: Suppose that in $K=\BQ{m}{s}$, the square-free integer $m$ satisfies the following: The Pell's equation $x^2-my^2 = a$ has no solution for all three right-hand sides $a=-1$, $a=-2$ and $a=2$. Then the fundamental unit of $\Q(\sqrt{m})$, $\ve_m$, is totally positive and neither $\ve_m$ nor $2\ve_m$ is a square in $K$.

By considering quadratic residues, it is easy to see that a sufficient condition for non-solvability of all three equations is that $m$ is divisible by
\begin{itemize}
    \item at least one prime of the form $4k+3$ (because of $a=-1$), and
    \item at least one prime of the form $8k+3$ or $8k-3$ (due to $a=2$), and
    \item at least one prime of the form $8k-1$ or $8k-3$ (regarding $a=-2$).
\end{itemize}
All in all, the only potentially \uv{bad-behaving} $m$'s are those which contain in their prime decomposition only primes $2$, $8k+1$ and one more residue class modulo $8$. Clearly, the density of such integers is zero. 

Of course, it is possible to continue this examination by deriving some analogy of Lemmas~\ref{Lemma:RootsOfUnits} and~\ref{Lemma:RootsOf2epsilon} for fields containing $\sqrt2$; however, the already presented results are sufficient to illustrate that fields not containing any nonsquare totally positive unit are rare and that mostly this unit multiplied by $2$ is not a square either. We conclude this part by showing that for some integral bases, there is always a nonsquare totally positive unit in $K$.

\begin{corollary}
If $K$ contains no nonsquare totally positive unit, then either $m=2$, or the integral basis is one of (B2) and (B4a).
\end{corollary}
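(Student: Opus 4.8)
The plan is to prove the contrapositive: assuming $m\neq2$ and that the integral basis is one of (B1), (B3), (B4b), I will exhibit a nonsquare totally positive unit of $\OK$. The two facts I will lean on are, first, Lemma~\ref{Lemma:RootsOfUnits}, which guarantees that $\ve_m$ is never a square in $K$ and that $\ve_s$ is not a square in $K$ once $m\neq2$; and second, the elementary criterion noted earlier in this appendix, namely that if a square-free $n$ has a prime divisor congruent to $3\pmod4$, then $x^2-ny^2=-1$ is unsolvable, so $\norm{\Q(\sqrt{n})/\Q}{\ve_n}=+1$ and hence $\ve_n$ is totally positive. Combining these, it suffices to show that \emph{at least one of $m$ and $s$ has a prime divisor $\equiv3\pmod4$}: whichever of them does, the corresponding fundamental unit $\ve_m$ or $\ve_s$ is then totally positive, and it is nonsquare in $K$ by Lemma~\ref{Lemma:RootsOfUnits} (unconditionally if it is $\ve_m$, and because $m\neq2$ if it is $\ve_s$); being a unit of a quadratic subfield, it is a unit of $\OK$, which is exactly what we need.

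Next I would dispose of the two easier bases. In case (B4b), the fact that $m_0\equiv s_0\equiv t_0\pmod4$ together with the defining condition $\gcd(p,q)\equiv3\pmod4$ forces $t_0=\gcd(m,s)\equiv3\pmod4$; since $\gcd(m,s)$ is square-free and $\equiv3\pmod4$ it has a prime divisor $\equiv3\pmod4$, which in particular divides $m$, so $\ve_m$ works. In case (B3) the residues of $p,q,r$, and hence of $m,s,t$, modulo $4$ are $3,1,3$, so exactly one of $m,s,t$ is $\equiv1\pmod4$ and the other two are $\equiv3\pmod4$; whichever index is the one $\equiv1\pmod4$, at least one of $m,s$ remains $\equiv3\pmod4$ and therefore carries a prime divisor $\equiv3\pmod4$. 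This settles (B3).

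The remaining case (B1) is the only real obstacle, because here the residues are $2,3,2$, so the unique index $\equiv3\pmod4$ is $q$ while the other two of $m,s,t$ are $\equiv2\pmod4$; when $q\in\{m,s\}$ we are immediately done, but when $q=t$ both $m$ and $s$ are even and need not visibly contain a prime $\equiv3\pmod4$. To handle this I would pass to the decomposition $m=s_0t_0$, $s=m_0t_0$, $t=m_0s_0$: since exactly one of $m_0,s_0,t_0$ is even, the case $q=t$ is precisely the case that the even factor sits in $t_0$, whence $m_0,s_0$ are odd and $t=m_0s_0\equiv3\pmod4$. A product of two odd numbers is $\equiv3\pmod4$ only when exactly one factor is $\equiv3\pmod4$; if $m_0\equiv3\pmod4$ then $m_0\mid s$ gives $s$ a prime divisor $\equiv3\pmod4$, and if $s_0\equiv3\pmod4$ then $s_0\mid m$ does the same for $m$. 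Either way one of $m,s$ has the required prime divisor, completing the argument. The crux is thus the observation that the prime $\equiv3\pmod4$ responsible for $t\equiv3\pmod4$ must already live inside $m$ or $s$ through the shared factors $m_0,s_0$.
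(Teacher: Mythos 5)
Your proof is correct and follows the same route as the paper: combine Lemma~\ref{Lemma:RootsOfUnits} with the observation that a prime divisor $\equiv 3\pmod 4$ of $n$ forces $\norm{\Q(\sqrt{n})/\Q}{\ve_n}=+1$, and then check that in bases (B1), (B3), (B4b) one of $m,s$ carries such a prime. The only difference is that the paper merely asserts this last arithmetic fact (``it turns out that\dots''), whereas you verify it explicitly via the decomposition $m=s_0t_0$, $s=m_0t_0$, $t=m_0s_0$; your case analysis, including the delicate (B1) subcase with $q=t$, is sound.
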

\begin{proof}
If $m\neq 2$, by Lemma~\ref{Lemma:RootsOfUnits} neither $\ve_m$ nor $\ve_s$ is a square. Thus, for a contradiction, it suffices to show that at least one of them is totally positive. It turns out that for all bases except (B2) and (B4a), at least one of the numbers $m$ and $s$ contains a prime divisor of the form $4k+3$, which shows that the corresponding field contains no unit of norm $-1$.
\end{proof}

%----------------------------------------------------------------------------------------------
\newpage
\section{Scheme of the main proof} \label{App}
\rotatebox{90}{
\begin{forest}for tree=draw, for tree={align=center, inner sep=3}
[\hyperlink{Sec:NonspecialCases}{{\Large \textbf{Main cases}}}, s sep=5mm, l sep=15mm, draw={black,thick}, inner sep=5, for tree={s sep=3.5mm}
    [{\hyperlink{Subsec:CaseI}{Case (I)} \\ $\abs{\sfrac{\UKPlus}{\UKctv}}>2$}
        [,phantom
            [,phantom]
            [,phantom]
        ]
        [,phantom
            [,phantom]
            [,phantom]
        ]
    ]
    [{\hyperlink{Subsec:CaseIInonsquare}{Case (II)} \\ $\abs{\sfrac{\UKPlus}{\UKctv}}=2$\\ \scriptsize{$Q\cong\tqf{1}{\ve}{\gamma}$}}
        [\hyperlink{Subsec:CaseIInonsquare}{$2\ve\neq\square$}
            [,phantom]
            [,phantom]
            %[\scriptsize{$\uqf{\gamma}$ repr. $1$}]
			%[\scriptsize{$\uqf{\gamma}$ repr. $2$}]
        ]
        [\hyperlink{Subsec:CaseIIsquare}{$2\ve=\square$}
			[\hyperlink{CaseIIsquare-not35}{\scriptsize{$m\neq3,5$}}]
			[\hyperlink{CaseIIsquare-35}{\scriptsize{$m=3,5$}}]
        ]
    ]
    [{\hyperlink{Subsec:CaseIIIdiag}{Case (III)} \\ $\abs{\sfrac{\UKPlus}{\UKctv}}=1$\\ \scriptsize{$Q\cong \uqf1\bot Q_0$}}
		[{\hyperlink{Subsec:CaseIIIdiag}{$Q_0$ diagonalizable}\\ \scriptsize{$Q\cong\tqf{1}{\beta}{\gamma}$}}
				[\hyperlink{CaseIIIdiag-1}{\scriptsize{$\bqf{\beta}{\gamma}$ repr. $1$}}]
				[\hyperlink{CaseIIIdiag-2}{\scriptsize{$\bqf{\beta}{\gamma}$ repr. $2$}}
				]
		]
		[{\hyperlink{Subsec:CaseIIInondiag}{$Q_0$ nondiagonalizable}\\ \scriptsize{$Q_0(\vct{e})=2$}}
			[{\hyperlink{CaseIIInondiag-a}{(a) $\indx{e}=\OK$} \\ \scriptsize{$2Q_0$ repr. $8$ or $10$}}
				    [\hyperlink{CaseIIInondiag-a-1}{\scriptsize{(ii): $1+9$, }}\vspace{-2mm}\\ 
				    \hyperlink{CaseIIInondiag-a-1}{\scriptsize{(iv): $9+1$,}}\vspace{-2mm}\\
				    \hyperlink{CaseIIInondiag-a-1}{\scriptsize{(vii): $4+4$}}
				    ]
					[\hyperlink{CaseIIInondiag-a-2}{\scriptsize{(vi): $1+7$}}
					]
					[\hyperlink{CaseIIInondiag-a-1}{\scriptsize{(i): $0+10$, }}\vspace{-2mm}\\ 
				    \hyperlink{CaseIIInondiag-a-1}{\scriptsize{(iii): $4+6$,}}\vspace{-2mm}\\
				    \hyperlink{CaseIIInondiag-a-1}{\scriptsize{(v): $0+8$}}
				    ]
			]
			[{\hyperlink{CaseIIInondiag-b}{(b) $\indx{e}\neq\OK$}\\ \scriptsize{$2Q_0$ repr. $8$ or $10$}}
					[\hyperlink{CaseIIInondiag-a-1}{\scriptsize{(ii): 1+9, }}\vspace{-2mm}\\ 
				    \hyperlink{CaseIIInondiag-a-1}{\scriptsize{(iv): 9+1,}}\vspace{-2mm}\\
				    \hyperlink{CaseIIInondiag-a-1}{\scriptsize{(vii): 4+4}}
				    ]
				    [\hyperlink{CaseIIInondiag-a-2}{\scriptsize{(vi): 1+7}}
					]
					[\hyperlink{CaseIIInondiag-a-1}{\scriptsize{(i): 0+10, }}\vspace{-2mm}\\ 
				    \hyperlink{CaseIIInondiag-a-1}{\scriptsize{(iii): 4+6,}}\vspace{-2mm}\\
				    \hyperlink{CaseIIInondiag-a-1}{\scriptsize{(v): 0+8}}
				    ]
			]
		]
	]
]
\end{forest}
}

\end{document}